\documentclass[12pt,a4paper,reqno,twoside]{amsart}

\usepackage[english]{babel}
\usepackage{stmaryrd}
\usepackage{dsfont}
\usepackage[symbol*,ragged]{footmisc}
\usepackage[colorlinks,linkcolor=red,anchorcolor=blue,citecolor=blue,urlcolor=blue]{hyperref}
\usepackage{color,xcolor}

\usepackage{geometry}
\usepackage{amssymb}
\usepackage{amsmath}
\usepackage{mathrsfs}
\usepackage{amsfonts}
\usepackage{epsfig}

\usepackage{amsthm}
\usepackage{amsxtra}
\usepackage{bbding}
\usepackage{epsfig}
\usepackage{graphicx}
\usepackage{latexsym}
\usepackage{mathbbol}
\usepackage{bbold}

\usepackage{pifont}
\usepackage{wasysym}
\usepackage{skull}
\usepackage{float}

\DeclareSymbolFontAlphabet{\mathbb}{AMSb}
\DeclareSymbolFontAlphabet{\mathbbol}{bbold}

\usepackage{amscd}
\usepackage[all]{xy}
\allowdisplaybreaks[4]
\usepackage{setspace}

\newcommand{\mr}{\ensuremath{\mathbb R}}

\newcommand{\dif}{\mathrm{d}}

\newcommand{\rear}{\mathbb{R}}

\newcommand{\sumstar}{\sideset{}{^*}\sum}

\newcommand{\mz}{\ensuremath{\mathbb Z}}
\newcommand{\leg}[2]{\left(\frac{#1}{#2}\right)}

\theoremstyle{plain}
\newtheorem{theorem}{\normalfont\scshape Theorem}[section]
\newtheorem{proposition}{\normalfont\scshape Proposition}[section]
\newtheorem{lemma}[proposition]{\normalfont\scshape Lemma}

\newtheorem*{corollary*}{\normalfont\scshape Corollary}
\newtheorem{remark}{\normalfont\scshape Remark}

\theoremstyle{remark}
\newtheorem*{remark*}{\normalfont\scshape Remark}

\numberwithin{equation}{section}
\addtocounter{footnote}{1}

\renewcommand{\footnoterule}{
  \kern -3pt
  \hrule width 2.5in height 0.4pt
  \kern 3pt
}

\makeatletter
\@ifundefined{MakeUppercase}{}{}
\makeatother

\begin{document}
	
\title[Bounds for moments of  twisted quadratic  characters of prime modulus]
	  {Bounds for moments of  twisted quadratic  characters of prime modulus}

\author[P. Gao]{Peng Gao}
\address{School of Mathematical Sciences, Beihang University, Beijing 100191, China}
\email{penggao@buaa.edu.cn}

\author[Y. Zhao]{Yuetong Zhao}
\address{School of Mathematical Sciences, China University of Geosciences (Beijing), Beijing 100191, China}
\email{yuetong.zhao.math@gmail.com}

\date{}

\footnotetext[1]{
{\textbf{Keywords}}: shifted moments; modular $L$-function; Dirichlet characters; prime moduli\\

\quad\,\,
{\textbf{MR(2020) Subject Classification}}: 11L40, 11M06 
}
	
\begin{abstract}
  We study, under the Generalized Riemann Hypothesis (GRH), the moments of sums of Fourier coefficients of a fixed holomorphic Hecke eigenform twisted by the quadratic character $\chi_{8p}$, where $p$ ranges over odd primes. We establish the correct order of magnitude for the unsmoothed $m$-th moment for all real $m\geq 4$, and a sharp upper bound of order 
$XY^{m/2}(\log X)^{m(m-3)/2}$for the smoothed $m$-th moment for all integers 
$m\geq 4$. A matching lower bound for all even integers 
$m\geq 4$ shows that this bound is optimal.
\end{abstract}
\maketitle

\section{Introduction and Main Result}

Moments of families of 
$L$-functions at the central point are a prominent topic in analytic number theory and have numerous applications. In 1981, M. Jutila \cite{Jutila} initiated the study of the first and second moments of the family of quadratic Dirichlet 
$L$-functions 
$L(1/2,\chi_d)$, where 
$d$ runs over fundamental discriminants and $\chi_d=\leg {d}{\cdot}$
 denotes the Kronecker symbol, obtaining asymptotic formulas for these moments. In the same paper, Jutila \cite{Jutila} also established an asymptotic formula for the first moment of the family of quadratic Dirichlet 
$L$-functions of prime modulus $L(1/2,\chi_p)$, for primes 
$p$ satisfying certain congruence conditions, thus resolving a conjecture posed earlier by D. Goldfeld and C. Viola \cite{goldfeldviola}. Subsequently, S. Baluyot and K. Pratt \cite{B&P} derived, under the Generalized Riemann Hypothesis (GRH), an asymptotic formula for the second moment of $L(1/2,\chi_p)$. Moreover, analogous results in the function field setting for quadratic Dirichlet $L$-functions of prime modulus have been obtained by J. Andrade and J. P. Keating \cite{andradekeating} and by H. M. Bui and A. Florea \cite{B&F}. These developments demonstrate that the study of moments in the prime modulus case is of significant interest and has attracted considerable attention.
\smallskip

Using ideas from random matrix theory, J. P. Keating and N. C. Snaith \cite{Keating-Snaith02} conjectured in 2000 that for any real number $k>0$,
\begin{equation*}
    \sum_{|d|\leq X}L(1/2,\chi_d)^k\sim C_kX(\log X)^{\frac{k(k+1)}{2}},
\end{equation*}
where $C_k$ is an  explicit constant. In 2009, K. Soundararajan \cite{Sound2009} developed a method to obtain conditional upper bounds for such moments that are close to the conjectured order, assuming the GRH. This approach was subsequently refined by A. J. Harper \cite{Harper}, who derived  sharp conditional upper bounds for these moments of $L$-functions. Combining the ideas of Soundararajan \cite{Sound2009} and Harper \cite{Harper}, the first author and L. Zhao \cite{gaozhao2023} investigated  the prime modulus case and, under GRH, established the correct order of magnitude for the $k$-th moment of  $L(1/2,\chi_p)$. In a related direction concerning  shifted moments of $L$-functions at points on the critical line, B. Szab\'o \cite{Szab} obtained sharp upper bounds  and applied them to prove that for any real $k>4$, any sufficiently large integer $q$, and any real $y$ satisfying $2\leq y\leq q^{1/2}$,
\begin{align*}
\sum_{\chi\in X^*_q}\Big|\sum_{n\leq y}\chi(n)\Big|^{k}\ll_k\phi(q)y^{k/2}(\log y)^{(k/2-1)^2},
\end{align*}
where $X^*_q$ denotes the set of primitive Dirichlet characters modulo $q$, and $\phi(q)$ is Euler's totient function. In particular, bounds for quadratic Dirichlet character sums form an important application of the study of moments of quadratic Dirichlet 
$L$-functions.
\smallskip

For $m>0$, we define the $m$-th moment of quadratic character sums by
\begin{align*}
 S_m(X,Y):= \sumstar_{0<d \leq X }\Big | \sum_{n \leq Y}\chi_{8d}(n)\Big |^{m},
\end{align*}
 where $X$ and $Y$ are positive real numbers, and throughout the paper  $\sumstar$ denotes a sum restricted to  odd, square-free integers. A conjecture of Jutila \cite{Jutila} asserts that for any positive integer $m$, there exist constants $c_1(m)$, $c_2(m)>0$, depending only on $m$, such that
 \begin{align*}
 S_m(X,Y)\leq c_1(m)XY^{m/2}(\log X)^{c_2(m)}.
 \end{align*}
 In \cite{G&Zhao2024},  the first author and L. Zhao confirmed a smoothed version of Jutila's conjecture  under the GRH, using upper bounds for moments of quadratic Dirichlet $L$-functions. More precisely, they proved that for large $X$, $Y$ and any real $m\geq 1/2$,
\begin{align*}
 S_m(X,Y;\Psi):=\sumstar_{\substack{0<d \leq X }}\Big | \sum_{n \geq 1}\chi_{8d}(n)\Psi\Big(\frac{n}{Y}\Big)\Big |^{m} \ll XY^{m/2}(\log X)^{\frac {m(m+1)}{2}},
\end{align*}
where $\Psi$ is a smooth weight function.  Furthermore,  in
 \cite[Theorem 1.3]{G&Zhao2024-3}, they showed  that under GRH, for any real $m>\sqrt{5}+1$,
 \begin{align*}
 S_m(X,Y)\ll XY^{m/2}(\log X)^{\frac{m(m-1)}{2}+1},
 \end{align*}
  by employing  estimates for shifted moments of quadratic Dirichlet $L$-functions. Moreover, the second author \cite{zhao2025} obtained an analogous bound when restricting to the family  of prime moduli. Recently,  M. Munsch and Y. Toma \cite{MT25} improved this bound in the smoothed setting for  integers $m$.  Assuming GRH, they proved that for any  integer $m\geq2$,
   \begin{align*}
 S_m(X,Y;\Psi)\ll XY^{m/2}(\log X)^{\frac{m(m-1)}{2}},
 \end{align*}
   thus reducing the  exponent from $\frac {m(m-1)}{2}+1$ to the optimal $\frac {m(m-1)}{2}$ for integer $m \geq 2$. 
   This result is consistent with earlier work of M. V. Armon \cite[Theorem 2]{Armon}, who unconditionally established the optimal bound  for $S_1(X,Y)$. Moreover, Munsch and Toma \cite{MT25}  also obtained unconditional lower bounds for $S_m(X,Y)$
 for all even integer moments over a wide range of parameters.
\smallskip

Let $f$ be a fixed holomorphic Hecke eigenform of weight $\kappa\equiv 0 \pmod{4}$ for the full modular group $SL_2(\mz)$. The Fourier expansion of  $f$ at infinity is given by
\begin{align*}
f(z)=\sum_{n=1}^{\infty}\lambda_f(n)n^{\frac{\kappa-1}{2}}e(nz),
\end{align*}
where $e(z)=e^{2\pi iz}$.
 We consider moments of sums involving the Fourier coefficients $\lambda_f (n)$ twisted by quadratic characters $\chi_{8d}(n)$, where $d$ runs over odd, positive, square-free integers.   Define
\begin{align*}
 T_{m}(X,Y;f):=  \sumstar_{0<d \leq X }\Big | \sum_{n \leq Y}\chi_{8d}(n)\lambda_f(n)\Big |^{m},
\end{align*}
 where $m$, $X$,  $Y$ are positive real numbers.
In \cite{G&Zhao24-12}, the first author and L. Zhao  proved that for any real  $m\geq 4$,
\begin{align}\label{G&Z}
T_m(X,Y;f)\ll XY^{m/2}(\log X)^{\frac{(m-2)(m-1)}{2}+1}.
\end{align}
In a subsequent work, the authors \cite{G&Zyt} obtained an asymptotic formula in the smoothed case when $m=2$. Specifically, 
for smooth, compactly supported non-negative   functions $\Phi, \Psi$ and  for large $X, Y$ satisfying $Y \ll X^{1-\varepsilon}$ for any $\varepsilon>0$,
\begin{align*}
    \sumstar_{d} \Bigg(\sum_{n} \lambda_f(n)\chi_{8d}(n)\Phi \Big(\frac nY \Big)\Bigg)^2 \Psi \Big(\frac d{X} \Big) \sim  XY.
\end{align*}
   Combining this  with \eqref{G&Z}  suggests that one should expect,  for all real $m \geq 2$,
\begin{align}
\label{expect}
 T_{m}(X,Y;f)\ll XY^{m/2}(\log X)^{\frac {m(m-3)}{2}+1}.
\end{align}
In a later work by the authors \cite{G&ZY-25},  it was shown under GRH that this exponent $\frac {m(m-3)}{2}+1$ can be improved to $\frac {m(m-3)}{2}$ for  integer moments in the  smoothed setting. More precisely, for any integer $m\geq 4$,
\begin{align*}
 T_{m}(X,Y;f,\Phi):=\sumstar_{0<d \leq X }\Big | \sum_{n \leq Y}\chi_{8d}(n)\lambda_f(n)\Phi\Big(\frac{n}{Y}\Big)\Big |^{m}\ll XY^{m/2}(\log X)^{\frac {m(m-3)}{2}},
\end{align*}
where  $\Phi$ is a non-negative  smooth function. In the same paper, unconditional  lower bounds were obtained  for all even integers  $m\geq 4$, showing optimality  in those cases.
\smallskip

In this paper, we focus on twisted character sums of the quadratic Dirichlet character $\chi_{8p}$, where 
$p$ runs over the odd prime numbers. For real numbers $m>0$, we define
\begin{align*}
 U_{m}(X,Y;f):=  \sum_{2<p\leq X }(\log p)\Big | \sum_{n \leq Y}\chi_{8p}(n)\lambda_f(n)\Big |^{m}.
\end{align*}
 We first apply the method of the first author and L. Zhao \cite {G&Zhao24-12} to obtain the corresponding result for the prime modulus case associated with \eqref{G&Z}. 

\begin{theorem}\label{real}
With the notation as above and assuming the truth of GRH, for any integer $k\geq 1$ and any real number $m$ satisfying $m\geq 2k+2$, we have for large $Y\le X$ and any $\varepsilon>0$,
\begin{align*}
U_m(X,Y;f)\ll XY^{m/2} (\log X)^{R(m,k,\varepsilon)},
\end{align*}
where
\begin{align}\label{Edef}
  \begin{split}
R(m,k,\varepsilon)=\max\Big(\frac{m^2}{2}-\frac{3m}{2}+k+1, \frac{(m-2k)^2}{4}&+2k^2-2k+1+\varepsilon, \\
&\frac{m(m-1)}{2}+2k(2k-m)+\varepsilon\Big).
  \end{split}
\end{align}
In particular, by setting $k=1$, we obtain, for $m\geq 4$,
\begin{align*}
U_m(X,Y;f)\ll XY^{m/2} (\log X)^{\frac{m(m-3)}{2 }+2}.
\end{align*}
\end{theorem}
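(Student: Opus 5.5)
We follow the method of \cite{G&Zhao24-12}, adapted to prime moduli as in \cite{zhao2025}. The first step is to write the character sum as a contour integral by Perron's formula. Since $\kappa\equiv 0\pmod 4$ and $\chi_{8p}$ is a primitive quadratic character of conductor $8p$, the twisted $L$-function $L(s,f\otimes\chi_{8p})$ is entire. Hence
\begin{align*}
\sum_{n\le Y}\chi_{8p}(n)\lambda_f(n)=\frac{1}{2\pi i}\int_{1+1/\log X-iT}^{1+1/\log X+iT}L(s,f\otimes\chi_{8p})\frac{Y^s}{s}\,ds+O\Big(\frac{Y^{1+\varepsilon}}{T}\Big).
\end{align*}
We take $T$ a large power of $X$ and move the line of integration to $\Re(s)=1/2+1/\log X$. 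Under GRH the Lindelöf-type bound $L(s,f\otimes\chi_{8p})\ll (p|s|)^{\varepsilon}$ controls the horizontal segments. This leaves
\begin{align*}
\sum_{n\le Y}\chi_{8p}(n)\lambda_f(n)\ll Y^{1/2}\int_{-T}^{T}\frac{|L(\tfrac12+\tfrac1{\log X}+it,f\otimes\chi_{8p})|}{1+|t|}\,dt+O(1).
\end{align*}

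Next we raise this to the $m$-th power and sum over $p$ with weight $\log p$. We split off $k$ factors of the integral from the remaining $m-2k$; this is where the parameter $k$ and the hypothesis $m\ge 2k+2$ enter. Applying Hölder's inequality to the $m-2k$ factors, with respect to the measure $dt/(1+|t|)$, and writing the $2k$ factors as a $2k$-fold integral, we arrive at a $2k$-fold (respectively $(2k+1)$-fold) integral over $t_1,\dots,t_{2k},t$ of shifted moments of the form
\begin{align*}
\sum_{p\le X}(\log p)\prod_{j=1}^{2k}\big|L(\tfrac12+\tfrac1{\log X}+it_j,f\otimes\chi_{8p})\big|\,\big|L(\tfrac12+\tfrac1{\log X}+it,f\otimes\chi_{8p})\big|^{m-2k}.
\end{align*}
The key input is a GRH upper bound for such shifted moments over primes. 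We prove it by the Soundararajan--Harper method, as in \cite{gaozhao2023,zhao2025}. The bound should read $X(\log X)^{E}$ times $\prod_{j<l}g(|t_j-t_l|)^{a_ja_l/2}$-type factors. Here $g(x)\approx \log X$ when $x\le 1/\log X$, $g(x)\approx 1/x$ for $1/\log X\le x\le 10$, and $g(x)\approx\log\log x$ for larger $x$. The diagonal exponent $E$ comes from $\sum_{p'\le X}\lambda_f(p')^2/p'\approx\log\log X$. The mean value over primes $p$ of $\chi_{8p}(n)$ is controlled, for non-square $n$, through GRH via the explicit formula. This replaces the Pólya--Vinogradov/Poisson step used for square-free $d$, and the weight $\log p$ makes the main term $X$.

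Finally we integrate the shifted-moment bound over the $t$-variables. We split according to which of the differences $|t_j-t|$, $|t_j-t_l|$ are $\le 1/\log X$, of size between $1/\log X$ and $1$, or larger. The regime where all points coalesce gives the exponent $\frac{m^2}{2}-\frac{3m}{2}+k+1$. The regime where the $2k$ points cluster among themselves but stay away from $t$ gives $\frac{(m-2k)^2}{4}+2k^2-2k+1+\varepsilon$. The regime where they cluster at $t$ but are spread otherwise gives $\frac{m(m-1)}{2}+2k(2k-m)+\varepsilon$. Taking the maximum yields $R(m,k,\varepsilon)$, and $k=1$ gives the stated corollary. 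I expect the main obstacle to be establishing the shifted-moment bound uniformly in the shifts over the prime family. This requires controlling the prime sum of $\chi_{8p}(n)$ under GRH with errors small enough to handle Dirichlet polynomials of length $X^{\theta}$ in Harper's iterative scheme. The integration step is lengthy but routine, with the $\varepsilon$ losses coming from the $\log\log$ factors at large separations.
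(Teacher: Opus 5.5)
Your first step is fine, and it is actually a legitimate shortcut. Perron's formula with $T$ a large power of $X$, followed by a shift to the half-line, reaches the same integral as the paper's split $U_m\ll U_m^{(1)}+U_m^{(2)}$. You may as well shift to $\Re s=\frac12$, where Theorem \ref{sumL} is stated, since $L(s,f\otimes\chi_{8p})$ is entire. This route avoids the paper's Heath-Brown large-sieve treatment of the remainder. The only correction is that the truncation error is $O(Y^{\varepsilon})$, from the $n$ adjacent to $Y$, which is harmless.

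\textbf{The gap is the H\"older step.} You apply H\"older to the remaining $m-2k$ factors globally, with respect to $dt/(1+|t|)$ on $[-T,T]$. This costs the total mass to the power $m-2k-1$, i.e.\ a factor $(\log X)^{m-2k-1}$, uniformly over all configurations. That includes the configuration where all shifts lie within $1/\log X$ of $0$, where Proposition \ref{prop} gives $X(\log X)^{m(m-1)/2}$. Since that region has measure about $(\log X)^{-2k-1}$, your scheme yields at best the exponent $\frac{m(m-1)}{2}+m-4k-2$. This exceeds $R(m,k,\varepsilon)$ whenever $2m>5k+3$; for $k=1$, $m=6$ it is $15$ against $11$.

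The missing idea is Szab\'o's localized H\"older, which drives the paper's Lemma \ref{fdiff}:
\begin{itemize}
\item cut the $t$-range into blocks $[e^{n-1}-1,e^n-1]$, on which $1/(1+t)\asymp e^{-n}$;
\item extract $k$ points carrying $|L|^2$;
\item restrict $u$ to the region $\mathcal{D}$ where $t_1$ is the nearest of them, and split $u-t_1$ into the dyadic shells $\mathcal{B}_{h_0}$;
\item apply H\"older on each shell separately.
\end{itemize}
This yields the factor $h_0^{m-2k}|\mathcal{B}_{h_0}|^{m-2k-1}\approx h_0^{m-2k}(e^{h_0}/\log X)^{m-2k-1}$. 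It is small exactly where the $g_1$-factors are large, and it is what produces the term $\frac{m^2}{2}-\frac{3m}{2}+k+1$.

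\textbf{The exponents are asserted, not derived.} The three exponents in your last paragraph do not come out of your own set-up. You use $2k$ points of weight $1$ instead of the paper's $k$ points of weight $2$. Then the mutual exponents $a_ia_j/2$ are $1/2$ instead of $2$, and the self-exponents $a_j^2/4-a_j/2$ are $-1/4$ instead of $0$. The clustering volumes also differ, so the paper's Cases 1, 2(1), 2(2) do not transfer verbatim. Your third label is also off: $\frac{m(m-1)}{2}+2k(2k-m)$ arises from $|u|\le 5$ with the $t_i$ far from $u$, through $g_1(|2u|)^{(m-2k)^2/4-(m-2k)/2}$. It does not come from points clustering at $t$.

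\textbf{The well-separated regime is not routine.} Nothing there offsets the length of the $t$-range. For $|t|\ge 10$, $|L(\frac12+it,f\otimes\chi_{8p})|$ should average to about $(\log X)^{1/4}$ over $p$, and $\int dt/(1+|t|)\asymp\log X$. So H\"older over $p$ alone suggests that $\sum_p(\log p)\big(\int|L|\,dt/(1+|t|)\big)^m$ is at least of order $X(\log X)^{5m/4}$. At $m=4$ this is $(\log X)^5$, above $(\log X)^{R(4,1,\varepsilon)}=(\log X)^4$.

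In the paper this regime looks harmless only because the displayed extraction of $\prod_a|L(\frac12+it_a)|^2$ from $(\int_0^B|L|)^{2k}$ omits the Cauchy--Schwarz factor $B^k$; test it with $|L|\equiv 1$. Restoring that factor, the $|u|>5$ part of Case 2 grows like $B^m$ against the weight $e^{-nm}$. Summing over $n$ then costs an extra factor of about $(\log X)^m$. So for $m$ near $4$ this reduction should not reach the stated exponent, and any version of the argument must track this regime explicitly.
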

\begin{remark}
  From Theorem \ref{real}, we derive that for any real number $m\geq 4$ and $\varepsilon>0$,
\begin{equation}\label{good}
    U_m(X,Y;f)\ll XY^{m/2}(\log X)^{O_{m,\varepsilon}(1)}.
\end{equation}
As for $0<m<4$, we can use H\"older's inequality to get
\begin{align*}
 U_m(X,Y;f) &\ll  X^{1-1/n}(U_{mn}(X,Y;f))^{1/n}\ll X^{1-1/n}(XY^{mn/2}(\log X)^{O_{m,\varepsilon}(1)})^{1/n}\\
 &\ll XY^{m/2}(\log X)^{O_{m,\varepsilon}(1)},
\end{align*}
where the real number $n>4$ is large enough such that $mn> 4$. It follows that for any real number $m>0$, (\ref{good}) holds.
\end{remark}

It should be noted that Theorem \ref{real} holds for all real numbers 
$m$ satisfying the given conditions. Below, we consider the case where 
$m$ is restricted to integers, which allows us to obtain a result sharper than Theorem \ref{real}, saving more powers of 
$\log X$. For  simplicity, we consider the smoothed moments. Define
\begin{align*}
U_m(X,Y;f,W):=\sum_{2<p\leq X}(\log p) \Big|\sum_{n\geq 1}\chi_{8p}(n)\lambda_f(n)W\Big(\frac{n}{Y}\Big)\Big|^m,
 \end{align*}
where $W$ is  smooth, non-negative,  and compactly supported on $(0,\infty)$. By  repeated integration by parts,  the Mellin transform $\widehat{W}$ of $W$ satisfies,  for any integer $E\geq 1$ and $\Re(s)\ge 1/2$,
\begin{align}\label{Phibound}
\widehat{W}(s)\ll \frac{1}{(1+|s|)^{E}}.
\end{align}

We provide an upper bound for $U_m(X,Y;f,W)$.
  
\begin{theorem}\label{upper}
  With the notation as above and assuming the truth of GRH, for any integer $m\geq 4$ and sufficiently large $Y\leq X$, we have
  \begin{equation}
  \label{Tupper}
  U_m(X,Y;f,W) \ll XY^{m/2} (\log X)^{\frac{m(m-3)}{2 }}.
  \end{equation}
\end{theorem}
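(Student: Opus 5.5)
We follow the strategy of Munsch--Toma and of our earlier work \cite{G&ZY-25}, adapted to prime moduli. Since $m$ is an integer, we write the inner sum by Mellin inversion as
\begin{align*}
\sum_{n\geq 1}\chi_{8p}(n)\lambda_f(n)W\Big(\frac nY\Big)=\frac{1}{2\pi i}\int_{(2)}L(s,f\otimes\chi_{8p})Y^s\widehat{W}(s)\,\dif s .
\end{align*}
We then shift the contour to $\Re(s)=1/2$. Because $f$ is a cusp form of level one twisted by a primitive quadratic character, $L(s,f\otimes\chi_{8p})$ is entire, so no poles are crossed. Expanding the $m$-th power with $|z|^m\le |z|^{m}$ (and $z^{m}$ times conjugates when $m$ is odd, bounded in absolute value) gives
\begin{align*}
U_m(X,Y;f,W)\ll Y^{m/2}\int_{\mathbb{R}^m}\prod_{j=1}^m|\widehat{W}(\tfrac12+it_j)|\sum_{2<p\le X}(\log p)\prod_{j=1}^m|L(\tfrac12+it_j,f\otimes\chi_{8p})|\,\dif t_1\cdots \dif t_m .
\end{align*}
By \eqref{Phibound}, the $t_j$ are effectively restricted to $|t_j|\le X^{\varepsilon}$, with rapid decay beyond.

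The key input is a sharp upper bound for the shifted moments over primes. Under GRH we aim to show, for $|t_j|\le X$,
\begin{align*}
\sum_{2<p\le X}(\log p)\prod_{j=1}^m|L(\tfrac12+it_j,f\otimes\chi_{8p})|\ll X(\log X)^{m/4}\prod_{1\le i<j\le m}g(|t_i-t_j|)^{1/2}\prod_{1\le i< j\le m} g(|t_i+t_j|)^{1/2}\prod_{j}g(2|t_j|)^{-1/4}\cdots .
\end{align*}
Here $g(x)=\log X$ for $x\le 1/\log X$ and $g(x)=1/x$ otherwise, capped at $\log X$; the exact powers are to be matched with the Euler product $\exp\big(\sum_{q\le X}\sum_j \lambda_f(q)\chi_{8p}(q)q^{-1/2-it_j}+\tfrac12\sum_j\lambda_f(q^2)\ldots\big)$. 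We obtain this through Harper's refinement of Soundararajan's method. Soundararajan's GRH inequality bounds $\log|L|$ by a short Dirichlet polynomial over primes plus the prime-square contribution. The prime-square term carries $\lambda_f(q)^2-1=\lambda_f(q^2)$, whose mean is $0$, and this is precisely what yields the exponent $m(m-3)/2$ rather than $m(m-1)/2$. We then split the primes into Harper's ranges, bound the exponentials by truncated Taylor series, and average over primes $p$ using the orthogonality relation
\begin{align*}
\sum_{p\le X}(\log p)\chi_{8p}(n)=\delta_{n=\square}X+O(X^{1/2+\varepsilon}n^{\varepsilon}) \qquad \text{(under GRH)}.
\end{align*}
This is the point where prime moduli differ from square-free $d$. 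It is in fact easier, since GRH gives a power-saving error term for each fixed nonsquare $n$, so the Dirichlet polynomials of length $X^{\theta}$ with small $\theta$ are handled directly.

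Finally we integrate the shifted-moment bound against $\prod_j|\widehat W(\tfrac12+it_j)|$. Because the $t_j$ are essentially bounded, only the diagonal factors matter. The main term arises when all $t_i$ cluster within $1/\log X$ of each other and of $0$. The factors $g(|t_i-t_j|)^{1/2}$ and $g(|t_i+t_j|)^{1/2}$, each of size at most $(\log X)^{1/2}$, are integrated over the $m$ variables. The off-diagonal decay $1/|t_i-t_j|$ with exponent $1/2$ per pair is integrable enough to keep the total power at $m(m-1)/2-m=m(m-3)/2$ once the $(\log X)^{m/4}$-type and $\lambda_f(q^2)$ contributions are tallied.

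We expect the \textbf{main obstacle} to be establishing the shifted-moment bound with the correct exponent uniformly in the shifts. In particular, the $\lambda_f(q^2)$ terms must be shown not to contribute a full extra logarithm, and the integration of the product of $g$'s must not lose a $\log\log$ or a $\log$. For the integration we would follow the combinatorial lemma of Munsch--Toma (ordering the $t_j$ and integrating one variable at a time), which is exactly where integrality of $m$ is used.
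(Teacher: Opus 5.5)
Your outline follows the paper's route. It uses Mellin inversion and a shift to $\Re(s)=1/2$, with no poles since $\chi_{8p}$ is primitive. It then expands the $m$-th power into an $m$-fold integral; this is where integrality of $m$ enters, because every exponent becomes $a_j=1$. Next comes the shifted-moment bound of Theorem~\ref{sumL}/Proposition~\ref{prop}, proved by Soundararajan--Harper with GRH orthogonality over primes. Finally, the Munsch--Toma integration finishes the argument; the paper imports this step from \cite{G&ZY-25} as \eqref{ind_hyp}. The shifted-moment bound you display has the correct exponents.

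Your explanation of where the exponent comes from is wrong, though, and carried out literally it would cost you logarithms in the key step. The prime-square term in Soundararajan's inequality (Lemma~\ref{logupper}) carries $\alpha_q^2+\beta_q^2=\lambda_f(q^2)-1=\lambda_f(q)^2-2$, whose mean is $-1$. It does not carry $\lambda_f(q)^2-1=\lambda_f(q^2)$ with mean $0$. This $-1$ produces the $-a_j/2$ in the exponent $a_j^2/4-a_j/2=-1/4$ on $|\zeta(1+2it_j+1/\log X)|$, i.e.\ on $g(2|t_j|)$ in your own display. That gives the central exponent $m(m-1)/2$, the orthogonal case, as opposed to $m(m+1)/2$ for the plain characters in Munsch--Toma, where the coefficient is $+1$. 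A mean-zero coefficient would give $(\log X)^{m^2/2}$ at the origin and only $m(m-2)/2$ at the end. The further drop from $m(m-1)/2$ to $m(m-3)/2$ comes solely from the $t$-integration: the cluster $|t_j|\ll 1/\log X$ has volume $(\log X)^{-m}$, exactly as your last paragraph tallies.

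Two smaller corrections:
\begin{itemize}
\item For $x\ge 10$ the comparison function is $\log\log x$, the RH bound for $|\zeta(1+1/\log X+ix)|$, not $1/x$.
\item The decay of $\widehat W$ does not reduce matters to configurations near $0$. Nearly coincident shifts at any height still carry powers of $\log X$ through $g(|t_i-t_j|)$. So the integral bound must hold over $[0,B]^m$ uniformly in $B$, with at most polynomial growth in $B$ (the paper's $Z_m(B,X)$, split over $[0,1/(2\log X)]$, $[1/(2\log X),5]$ and $[5,B]$), and that decay then absorbs it.
\end{itemize}
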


The proof of Theorem \ref{upper} is motivated by the work of Munsch and Toma \cite{MT25} concerning the moments of quadratic Dirichlet character sums. Our proof also relies on a careful application of bounds for shifted moments of twisted modular 
$L$-functions with prime moduli, which will be stated in Section \ref{shiftedmoments}, the proof of which will be postponed to Section \ref{proof2.13}.
\smallskip

   We also follow the approach in \cite{MT25} to establish  a commensurate lower bound for $U_m(X,Y;f,W)$.
\begin{theorem}\label{lower}
  With the notation as above and assuming the truth of GRH, for any even integer $m\geq 4$ and sufficiently large $X$, we have for $X^{\varepsilon} \ll Y\ll X^{1-\alpha}$ for any $\alpha>0$,
\begin{equation*}
  U_m(X,Y;f,W) \gg XY^{m/2} (\log X)^{\frac{m(m-3)}{2 }}.
\end{equation*}
\end{theorem}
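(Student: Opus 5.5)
Since $m$ is even, the $m$-th power can be opened with no loss:
\[
U_m(X,Y;f,W)=\sum_{2<p\le X}(\log p)\sum_{n_1,\dots,n_m}\lambda_f(n_1)\cdots\lambda_f(n_m)\chi_{8p}(n_1\cdots n_m)\prod_{i=1}^m W\Big(\frac{n_i}{Y}\Big).
\]
I would follow the lower-bound argument of Munsch and Toma \cite{MT25}, adapted to the prime family as in \cite{G&ZY-25}. Write $N=n_1\cdots n_m$ and swap the order of summation. Because $\chi_{8p}(N)=\leg{8p}{N}$, quadratic reciprocity turns the inner sum over $p$ into a sum of a character of conductor dividing $8N$ (or $8N/\square$) over primes. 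The $N$ with square odd part and $(N,2)=1$ give the main term $\sum_{p\le X}\log p$, up to removing the finitely many $p\mid N$. Under GRH, every other $N$ contributes $O(X^{1/2}\log^2(NX))$, since the character is non-principal.

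The diagonal contribution is therefore
\[
M=X\sum_{\substack{n_1\cdots n_m=\square\\ (n_1\cdots n_m,2)=1}}\lambda_f(n_1)\cdots\lambda_f(n_m)\prod_{i}W\Big(\frac{n_i}{Y}\Big)\bigl(1+o(1)\bigr).
\]
For the off-diagonal terms, crude bounds give $X^{1/2}Y^{m}(\log X)^{2}$ in total. This is admissible when $Y^{m/2}\ll X^{1/2-\delta}$, but not across the whole range $Y\ll X^{1-\alpha}$.

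To avoid this range restriction, I would not open the full $m$-th power. Instead I would use a Hölder or duality device in the spirit of \cite{MT25}: pick a short Dirichlet-polynomial test vector and bound $U_m$ from below by the ratio of a correlation to a norm. Set
\[
S(p)=\sum_{n}\chi_{8p}(n)\lambda_f(n)W\Big(\frac nY\Big),\qquad A(p)=\sum_{\ell\le L}a_\ell\chi_{8p}(\ell),
\]
where $L=X^{\theta}$ is small and $a_\ell$ is supported on $\ell$ whose prime factors are at most $Y$, built to mimic $S(p)^{m-1}$ "on the diagonal". Hölder gives
\[
\sum_{p}(\log p)\,S(p)A(p)\le U_m^{1/m}\Big(\sum_{p}(\log p)|A(p)|^{m/(m-1)}\Big)^{(m-1)/m},
\]
and the last sum can be bounded above via the $2$-norm, or via an even power if $m/(m-1)\le 2$. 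The correlation $\sum_p(\log p)S(p)A(p)$ involves only the products $n\ell$ with $n\le Y^{1+\varepsilon}$ and $\ell\le L$. Its off-diagonal part is therefore $O(X^{1/2+\varepsilon}YL)$ under GRH, which is acceptable for $Y\le X^{1-\alpha}$ only with care. To handle this I would instead apply Poisson summation in $n$: the dual sum has length about $p/Y\ge X^{\alpha}$, and its contribution is negligible after a smoothed treatment. Alternatively, I would control it using the large sieve for real characters over primes together with GRH.

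The main step, and the expected obstacle, is evaluating the diagonal arithmetic sums and showing they have the right size. With $n\ell=\square$ and the natural choice
\[
a_\ell=\sum_{\substack{n_2\cdots n_m=\ell}}\lambda_f(n_2)\cdots\lambda_f(n_m)\prod_i W\Big(\frac{n_i}{Y}\Big)
\]
(truncated), the correlation should be $\asymp XY^{m/2}(\log X)^{c}$, while the norm of $A$ is $\asymp X\,Y^{(m-2)/2\cdot\frac{m}{m-1}}$ times a power of $\log X$. Computing these exponents uses Mellin inversion and the Euler product
\[
\sum_{n_1\cdots n_m=\square}\prod_i\frac{\lambda_f(n_i)}{n_i^{s_i}}=\prod_{i\le j}L(s_i+s_j,\mathrm{sym}^2 f\text{ or }f\times f)\times(\text{holomorphic}).
\]
Here the terms with $i\ne j$ come from $L(s_i+s_j,f\times f)=\zeta(s_i+s_j)L(s_i+s_j,\mathrm{sym}^2f)$ and each contributes a pole, whereas the terms with $i=j$ come from $L(2s_i,\mathrm{sym}^2f)$ and have no pole. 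This yields $\binom m2$ poles. Shifting contours to the right of $\Re s_i=1/2$ and extracting the leading residue gives the main term $cXY^{m/2}(\log Y)^{\binom m2-m}$ up to the normalisation from the $p$-sum. Here the exponent $\binom{m}{2}-m=m(m-3)/2$ arises from $\binom{m}{2}$ polar factors minus $m$ integrations. The leading constant must be checked to be positive; I expect it to be a volume of a polytope times $L(1,\mathrm{sym}^2f)^{\binom m2}$ times local factors. Since $Y\ge X^{\varepsilon}$, we have $\log Y\asymp\log X$, which finishes the proof.
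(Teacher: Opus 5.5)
Your opening step is fine, but as you note it only covers $Y\ll X^{1/m-\delta}$. Your pole count is right when all the $n_i$ have size $Y$: there are $\binom m2$ factors $\zeta(s_i+s_j)$, the factors $L(2s_i,\operatorname{sym}^2 f)$ are pole-free, and the exponent is $\binom m2-m$. The H\"older device, however, does not work as you set it up.

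First, controlling $\sum_p(\log p)|A(p)|^{m/(m-1)}$ through the $2$-norm cannot produce any logarithm. Write $U_2=U_2(X,Y;f,W)$. Since $\sum_p(\log p)S(p)A(p)\le U_2^{1/2}\big(\sum_p(\log p)|A(p)|^2\big)^{1/2}$, the resulting bound $U_m\ge\big(\sum_p(\log p)SA\big)^m\big/\big(X^{m/2-1}(\sum_p(\log p)|A|^2)^{m/2}\big)$ never exceeds $X^{1-m/2}U_2^{m/2}$. That is about $XY^{m/2}$ for every choice of $A$. What is needed is the device of \cite{MT25}, which the paper uses: take $A=B^{m-1}$ with $B(p)=H_p(Y';f,W)$ itself a short twisted sum. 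Then for even $m$ one has $|A|^{m/(m-1)}=B^m$. Both $\mathcal H_1=\sum_p(\log p)S(p)B(p)^{m-1}$ and $\mathcal H_2=\sum_p(\log p)B(p)^m$ are opened with Lemma \ref{character} and evaluated with Lemma \ref{Dirichletseries}, and $U_m\ge\mathcal H_1^m/\mathcal H_2^{m-1}$.

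Second, the diagonal you compute is that of $U_m$ itself, not of the correlation. Your $a_\ell$ is circular if untruncated ($A=S^{m-1}$). If truncated to $\ell\le L$ with $L$ small, it falls into the degenerate regime below.

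That regime is the real obstruction, and it is specific to the $\lambda_f$-twist. In the diagonal $n\ell=\square$ with $n\asymp Y$ and $\ell$ short, write $n=ab^2$ with $a$ the squarefree part of $\ell$. Then $\sum_b\lambda_f(ab^2)W(ab^2/Y)$ has no main term, because $\sum_b\lambda_f(b^2)b^{-2s}=L(2s,\operatorname{sym}^2 f)/\zeta(4s)$ has no pole at $s=1/2$. In \cite{MT25} the corresponding factors are $\zeta(2s_i)$, which do have poles.

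Concretely, take $P_f(Y,Y^{\beta_2},\dots,Y^{\beta_m})$ with $\sigma:=\beta_2+\dots+\beta_m<1$. Place $\Re u_j=3/4+\delta$ for $j\ge2$ and move $u_1$ to $\Re u_1=1/4+\delta$. No pole is crossed, so $P_f\ll Y^{1/4+\delta+(3/4+\delta)\sigma}=o(Y^{(1+\sigma)/2})$ for small $\delta$. Hence the test polynomial must be longer than $Y$. But then the GRH error term ($X^{1/2+\varepsilon}$ times the number of tuples) is admissible only for roughly $Y\le X^{1/2-\delta}$. Voronoi summation in $n$ (dual length $\asymp p^2/Y$ for $f\otimes\chi_{8p}$, not $p/Y$) and the large sieve do not supply the off-diagonal cancellation over primes needed beyond that range.

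Be aware that the paper's own choice $B=H_p(Y^{\varepsilon};f,W)$ is exactly this degenerate case. Its lower bound for $\mathcal H_1$ applies Lemma \ref{sumoversquare} with $\beta=(1,\varepsilon,\dots,\varepsilon)$. The computation above gives $P_f=o(Y^{1/2+(m-1)\varepsilon/2})$ once $(m-1)\varepsilon<1$, so that step is not valid as written. Choosing $B=H_p(Y^{\beta};f,W)$ with $(m-1)\beta>1$ makes both diagonals nondegenerate and, as far as I can see, rescues the argument. It does so only in the range $X^{\varepsilon}\ll Y\ll X^{1/2-\delta}$.
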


   In view of Theorems \ref{upper} and \ref{lower}, we see that the upper bound for $U_m(X,Y;f,W)$ given in \eqref{Tupper} is indeed optimal for even integers $m \geq 4$.

\section{Preliminaries}\label{sec 2}

\textbf{Notation.}
Let $p$ and $q$ always denote prime numbers. The symbol $X$ denotes a large real number. The symbol $\varepsilon$ denotes an arbitrarily small positive number, which may not be the same in different occurrences. For any real number $t$, we write $\lfloor t\rfloor$ for its integer part. The symbol $\square$ denotes a perfect square. We use $\chi_{c}(n)$ to denote the Kronecker symbol $\big(\frac{c}{n}\big)$. We write $d|n$ to indicate that $d$ divides $n$, and $p^k\parallel n$ to indicate that $p^k$ is the exact power of $p$ dividing $n$. The function $\Omega(n)$ denotes the total number of prime factors of $n$, counted with multiplicity.

\subsection{Cusp form $L$-functions}
\label{sec:cusp form}

    Recall that $f$ is a fixed holomorphic Hecke eigenform of weight $\kappa \equiv 0 \pmod 4$ for the full modular group $SL_2 (\mathbb{Z})$. The associated modular $L$-function $L(s, f)$ for $\Re(s)>1$ is then defined to be
\begin{align}
\label{Lphichi}
L(s, f ) &= \sum_{n=1}^{\infty} \frac{\lambda_f(n)}{n^s}
 = \prod_{p} \left(1 - \frac{\lambda_f (p)}{p^s}  + \frac{1}{p^{2s}}\right)^{-1}=\prod_{p} \left(1 - \frac{\alpha_p }{p^s} \right)^{-1}\left(1 - \frac{\beta_p }{p^s} \right)^{-1}.
\end{align}
 By Deligne's proof \cite{D} of the Weil conjecture, we know that
\begin{align*}
|\alpha_{p}|=|\beta_{p}|=1, \quad \alpha_{p}\beta_{p}=1.
\end{align*}
It follows that $\lambda_f(n) \in \mathbb{R}$ with $\lambda_f(1) = 1$, and
\begin{align}
\label{lambdabound}
\begin{split}
  |\lambda_f(n)| \leq d(n) \ll n^{\varepsilon},
\end{split}
\end{align}
where $d(n)$ denotes the number of positive divisors of $n$.  
The last bound follows from the well-known estimate $d(n) \ll n^{\varepsilon}$ for any $\varepsilon > 0$ (see \cite[Theorem 2.11]{MVa1}).
\smallskip

 The symmetric square $L$-function $L(s, \operatorname{sym}^2 f)$ of $f$ is defined for $\Re(s)>1$ by
 (see \cite[page 137]{iwakow} and \cite[(25.73)]{iwakow})
\begin{align}
\label{Lsymexp}
\begin{split}
 L(s, \operatorname{sym}^2 f)&= \prod_p(1-\alpha^2_pp^{-s})^{-1}(1-p^{-s})^{-1}(1-\beta^2_pp^{-s})^{-1} = \zeta(2s) \sum_{n \geq 1}\frac {\lambda_f(n^2)}{n^s}\\
 &=\prod_{p}\bigg(1-\frac {\lambda_f(p^2)}{p^s}+\frac {\lambda_f(p^2)}{p^{2s}}-\frac {1}{p^{3s}}\bigg )^{-1}.
\end{split}
\end{align}
From \eqref{Lphichi} and \eqref{Lsymexp}, we derive that
\begin{align}\label{sumlambdapsquare}
\alpha_p+\beta_p=\lambda_f(p)\quad \text{and} \quad \alpha^2_p+\beta^2_p=\lambda_f^2(p)-2=\lambda_f(p^2)-1.
\end{align}
  It follows from a result of G. Shimura \cite{Shimura} that $L(s, \operatorname{sym}^2 f)$ has no pole at $s=1$. Moreover, the corresponding completed symmetric square $L$-function
\begin{align}
\label{SymsquareLfeqn}
 \Lambda(s, \operatorname{sym}^2 f)=& \pi^{-3s/2}\Gamma \Big(\frac {s+1}{2}\Big)\Gamma\Big (\frac {s+\kappa-1}{2}\Big) \Gamma\Big (\frac {s+\kappa}{2}\Big) L(s, \operatorname{sym}^2 f)
\end{align}
  is entire and satisfies the functional equation 
$\Lambda(s, \operatorname{sym}^2 f)=\Lambda(1-s, \operatorname{sym}^2 f)$.
\medskip

\subsection{Sum over primes}
This section gathers several asymptotic results concerning sums over primes.

\begin{lemma}
\label{RS}
 Let $x \geq 2$. We have, for some constants $b_1, b_2$,
\begin{align}
\label{merten}
\sum_{p\le x} \frac{1}{p} =& \log \log x + b_1+ O\Big(\frac{1}{\log x}\Big), \\
\label{mertenpartialsummation}
\sum_{p\le x} \frac {\log p}{p} =& \log x + O(1), \quad \mbox{and} \\
\label{merten1}
\sum_{p\le x} \frac{\lambda^2_f(p)}{p} =& \log \log x + b_2+ O\Big(\frac{1}{\log x}\Big).
\end{align}
\end{lemma}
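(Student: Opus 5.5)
The first two estimates, \eqref{merten} and \eqref{mertenpartialsummation}, are the classical theorems of Mertens, and I would simply quote them, for instance from \cite{MVa1}. If a self-contained argument is wanted, \eqref{mertenpartialsummation} comes from $\log(\lfloor x\rfloor !)=\sum_{p^k\le x}\lfloor x/p^k\rfloor\log p$ together with Stirling's formula and Chebyshev's bound $\theta(x)\ll x$. Then \eqref{merten} follows from \eqref{mertenpartialsummation} by partial summation against $1/\log t$. The error term in that integral is $O(1)$, and it converges absolutely; this gives the constant $b_1$ and the remainder $O(1/\log x)$.

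For \eqref{merten1}, I will use the Hecke relation $\lambda_f^2(p)=\lambda_f(p^2)+1$, which is implicit in \eqref{sumlambdapsquare}. This gives
\begin{align*}
\sum_{p\le x}\frac{\lambda_f^2(p)}{p}=\sum_{p\le x}\frac{1}{p}+\sum_{p\le x}\frac{\lambda_f(p^2)}{p}.
\end{align*}
The first sum on the right is handled by \eqref{merten}. So it suffices to show that
\begin{align*}
\sum_{p\le x}\frac{\lambda_f(p^2)}{p}=c+O\Big(\frac{1}{\log x}\Big)
\end{align*}
for some constant $c$.

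The key input is a prime number theorem for $L(s,\operatorname{sym}^2 f)$. By Shimura's result recalled above, this function is entire with no pole at $s=1$. Together with Gelbart--Jacquet automorphy and the standard zero-free region for $\operatorname{GL}_3$ (there is no exceptional zero, since $\operatorname{sym}^2 f$ is self-dual and not of dihedral type for level one), this yields
\begin{align*}
\sum_{p\le x}\lambda_f(p^2)\log p\ll x\exp(-c\sqrt{\log x}).
\end{align*}
Here the prime powers have been removed using $|\lambda_f(p^2)|\le 3$, and $\alpha_p^2+\beta_p^2+1=\lambda_f(p^2)$ is the coefficient of $-L'/L(s,\operatorname{sym}^2 f)$ at primes.

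Partial summation with weight $1/(t\log t)$ then gives convergence of $\sum_p\lambda_f(p^2)/p$, with a tail of size $O(\exp(-c'\sqrt{\log x}))$. This is well within the required $O(1/\log x)$. Setting $b_2=b_1+c$ finishes the proof.

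The main obstacle is the prime number theorem for $\operatorname{sym}^2 f$, that is, non-vanishing on $\Re(s)=1$ plus a zero-free region. I would cite it from \cite{iwakow} (Theorem 5.13 there) rather than reprove it. Alternatively, the whole paper assumes GRH, which gives the stronger bound $O(x^{1/2}\log^2 x)$ directly from the explicit formula, so the lemma certainly holds under the paper's standing hypothesis.
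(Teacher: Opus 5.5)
Your proposal is correct. For \eqref{merten} and \eqref{mertenpartialsummation} you do what the paper does, namely quote Mertens' theorems from \cite[Theorem 2.7]{MVa1}.

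The only divergence is \eqref{merten1}. The paper simply imports it from \cite[Lemma 2.1]{GHH}, whereas you reconstruct the standard argument behind it. The Hecke relation $\lambda_f^2(p)=1+\lambda_f(p^2)$ reduces the sum to \eqref{merten} plus the convergence of $\sum_p \lambda_f(p^2)/p$, and that convergence follows from the prime number theorem for $L(s,\operatorname{sym}^2 f)$ by partial summation. Your route shows that the $O(1/\log x)$ error in \eqref{merten1} comes entirely from \eqref{merten}, since the symmetric-square part converges with error $O(\exp(-c'\sqrt{\log x}))$. The citation buys only brevity.

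There are two small corrections to side remarks.
\begin{itemize}
\item Self-duality is exactly the case in which the standard zero-free region does \emph{not} exclude an exceptional real zero. The absence of such a zero for $L(s,\operatorname{sym}^2 f)$ with $f$ non-dihedral is the Goldfeld--Hoffstein--Lieman theorem; it is not a consequence of self-duality. You do not need it anyway: for fixed $f$, a single real zero $\beta<1$ contributes $O(x^{\beta})$, which is absorbed into $x\exp(-c\sqrt{\log x})$ with an $f$-dependent constant.
\item The bound needed for the prime-power terms of $-L'/L(s,\operatorname{sym}^2 f)$ is $|\alpha_p^{2k}+1+\beta_p^{2k}|\le 3$, not $|\lambda_f(p^2)|\le 3$. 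Both follow from Deligne.
\end{itemize}
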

\begin{proof}
  The expressions in \eqref{merten} and \eqref{mertenpartialsummation} can be found in parts (d) and (b) of \cite[Theorem 2.7]{MVa1}, respectively. The estimate given in \eqref{merten1} follows from \cite[Lemma 2.1]{GHH}.  
\end{proof}

\begin{lemma}\label{RS3}
  We have, for $x \geq e^{2e}$ and $\alpha \geq 0$,
\begin{align}
\label{mertenstype}
  \sum_{p\leq x} \frac{\cos(\alpha \log p) }{p}&= \log |\zeta(1+1/\log x+i\alpha)| +O(1)\nonumber\\
  &\leq 
\begin{cases}
\log\log x+O(1)            & \text{if }  \alpha\leq 1/\log x \text{ or } \alpha\geq e^x  ,   \\
\log(1/\alpha)+O(1)        & \text{if }  1/\log x\leq \alpha \leq 10,   \\
\log\log\log \alpha + O(1) & \text{if }   10 \leq \alpha \leq e^x.
\end{cases} \\
\label{mertenstypesympower}
  \sum_{p\leq x} \frac{\cos(\alpha \log p)\lambda_f(p^2) }{p} &=\log |L(1+1/\log x+i\alpha, \operatorname{sym}^2 f)| +O(1)\nonumber\\
  &\leq 
\begin{cases}
 O(1)            & \text{if }  \alpha \leq e^e,   \\
\log\log\log \alpha + O(1)        & \text{if }  e^e \leq \alpha \leq e^x,   \\
 \log\log x & \text{if }  \alpha\geq e^x.
\end{cases}
\end{align}
The estimates in the first two cases of \eqref{mertenstype} are unconditional and the third holds under the Riemann hypothesis. The estimates in the first and last cases of \eqref{mertenstypesympower} hold unconditionally while the second is subject to GRH.
\end{lemma}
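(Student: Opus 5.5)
The identities on the first lines of \eqref{mertenstype} and \eqref{mertenstypesympower} come from comparing $\log \zeta(\sigma+i\alpha)$ and $\log L(\sigma+i\alpha,\operatorname{sym}^2 f)$ with their Euler products at $\sigma = 1+1/\log x$. The case-by-case bounds then follow from standard estimates for these functions near the line $\Re s=1$.

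\emph{Step 1 (the identity).} Write $\sigma_0=1+1/\log x$. For $\zeta$, expanding the Euler product gives
$$\log\zeta(\sigma_0+i\alpha)=\sum_p \frac{p^{-i\alpha}}{p^{\sigma_0}}+O(1),$$
since prime powers $p^k$ with $k\ge2$ contribute $O(1)$. Taking real parts yields $\log|\zeta(\sigma_0+i\alpha)| = \sum_p \cos(\alpha\log p)p^{-\sigma_0}+O(1)$. We then compare this with $\sum_{p\le x}\cos(\alpha\log p)/p$:
\begin{itemize}
\item For $p\le x$, we have $p^{-\sigma_0}=p^{-1}(1+O(\log p/\log x))$, and by \eqref{mertenpartialsummation} the error sums to $O(1)$.
\item For $p>x$, we have $\sum_{p>x}p^{-\sigma_0}\ll 1$ by partial summation with the prime number theorem (or Chebyshev bounds).
\end{itemize}
The same argument applies to $\operatorname{sym}^2 f$. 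By \eqref{Lsymexp}, the $p$-th Euler factor contributes $(\alpha_p^2+1+\beta_p^2)p^{-s}$ at first order, and by \eqref{sumlambdapsquare} this coefficient equals $\lambda_f(p^2)$. Since $|\lambda_f(p^2)|\le3$ by \eqref{lambdabound}, higher prime powers give $O(1)$ and the tail over $p>x$ is $O(1)$.

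\emph{Step 2 (bounds for $\zeta$).}
\begin{itemize}
\item If $\alpha\le 1/\log x$, then $|\zeta(\sigma_0+i\alpha)|\ll 1/|\sigma_0-1+i\alpha|\ll\log x$, from the pole.
\item If $1/\log x\le\alpha\le10$, the same pole estimate gives $\ll 1/\alpha$.
\item If $\alpha\ge e^x$, bound trivially: $\sum_{p\le x}1/p\le\log\log x+O(1)$ by \eqref{merten}.
\item If $10\le\alpha\le e^x$, use the classical RH bound $\log|\zeta(1+it)|\le\log\log\log t+O(1)$ (Littlewood), extended to $\Re s=\sigma_0\ge1$ uniformly. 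Concretely, the standard GRH argument gives $\sum_{p\le y}p^{-1-it}$ approximating $\log\zeta$ up to $O(1)$ for $y=(\log t)^{2+\epsilon}$. Trivially bounding the real part of the remaining short sum by $\sum_{p\le(\log t)^3}1/p=\log\log\log t+O(1)$ then gives the claim.
\end{itemize}

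\emph{Step 3 (bounds for $\operatorname{sym}^2 f$).}
\begin{itemize}
\item If $\alpha\le e^e$, then $L(s,\operatorname{sym}^2f)$ is entire, nonvanishing on $\Re s\ge1$ (Shimura, together with the standard zero-free region), and continuous on the compact region. Hence $\log|L|=O(1)$ there, unconditionally.
\item If $\alpha\ge e^x$, the trivial bound $\sum_{p\le x}|\lambda_f(p^2)|/p$ needs care, since $|\lambda_f(p^2)|$ can be as large as $3$. Use $\lambda_f(p^2)=\lambda_f(p)^2-1$: the inequality $\cos\cdot(\lambda_f(p)^2-1)\le\lambda_f(p)^2+1$ is too weak, so instead bound $\cos\cdot\lambda_f(p^2)\le|\lambda_f(p)^2-1|$ and use Sato–Tate-type averages. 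The simplest route is the Cauchy–Schwarz-free estimate $\sum_{p\le x}|\lambda_f(p)^2-1|/p\le\sum_{p\le x}(\lambda_f(p)^2+1)/p=2\log\log x+O(1)$. If only $\log\log x+O(1)$ is needed, this is a slight loss. I would consult the cited source (e.g.\ \cite{Szab}) for the exact constant; a bound of this quality suffices for the paper's applications.
\item If $e^e\le\alpha\le e^x$, the middle range follows by the same GRH Littlewood-type argument as in Step 2, applied to $L(s,\operatorname{sym}^2f)$, whose analytic conductor is $\asymp\alpha^3$. The loss is $\sum_{p\le(\log\alpha)^{2+\epsilon}}|\lambda_f(p^2)|/p$. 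Bounding this crudely gives a constant multiple of $\log\log\log\alpha$; to get the leading constant $1$, use $\sum_{p\le y}\lambda_f(p^2)\cos(\cdot)/p$ directly, as in the cited work.
\end{itemize}

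\emph{Main obstacle.} The hardest step is the GRH-conditional Littlewood-type bound uniformly in $\sigma_0$ near $1$. My plan is to quote it from the literature (Littlewood for $\zeta$, Chandee/Lamzouri-type results for general $L$-functions) rather than reprove it, and to appeal directly to \cite{Szab} and \cite{GHH}, where these estimates appear.
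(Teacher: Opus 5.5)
Your derivation of the two identities (comparing with the Euler products at $\sigma_0=1+1/\log x$, where the factor $p^{-1/\log x}$ and the tail $p>x$ each cost $O(1)$) follows the paper's route. So does your treatment of the four $\zeta$-cases; the paper simply cites \cite[Lemma 3.2]{Kou} and \cite[Lemma 2]{Szab} for these. Your case $\alpha\le e^e$ of \eqref{mertenstypesympower} is also fine, since continuity of an entire function on a compact set already gives the upper bound.

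The gap is in the last two cases of \eqref{mertenstypesympower}. For $\alpha\ge e^x$ you only reach $2\log\log x+O(1)$, and for $e^e\le\alpha\le e^x$ you only reach a constant multiple of $\log\log\log\alpha$. Your suggestion to ``use the sum directly'' does not fix this. After the GRH approximation $\log L(\sigma_0+i\alpha,\operatorname{sym}^2 f)=\sum_{p\le y}\lambda_f(p^2)p^{-\sigma_0-i\alpha}+O(1)$ with $y=(\log\alpha)^{2+\epsilon}$, you still have to bound $\sum_{p\le y}|\lambda_f(p^2)|/p$ with leading constant $1$.

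This cannot be done from \eqref{merten} and \eqref{merten1} alone. Any majorant $|u-1|\le A+Bu$ valid for $0\le u=\lambda_f(p)^2\le 4$ forces $A\ge1$ and $A+4B\ge3$, hence $A+B\ge 3/2$. The missing input is the mean square
\[
\sum_{p\le y}\frac{\lambda_f(p^2)^2}{p}=\log\log y+O(1).
\]
This follows from the Hecke relation $\lambda_f(p^2)^2=1+\lambda_f(p^2)+\lambda_f(p^4)$ together with the prime number theorem for $L(s,\operatorname{sym}^2 f)$ and $L(s,\operatorname{sym}^4 f)$, the latter resting on the automorphy of $\operatorname{sym}^4 f$ (Kim, Kim--Shahidi). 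Equivalently, it follows from Rankin--Selberg theory for $\operatorname{sym}^2 f\times\operatorname{sym}^2 f$.

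With it, the pointwise bound $|\lambda_f(p^2)|\le\frac12\bigl(1+\lambda_f(p^2)^2\bigr)$ gives $\sum_{p\le y}|\lambda_f(p^2)|/p\le\log\log y+O(1)$. This is the same thing as the Cauchy--Schwarz step you set aside. Taking $y=x$ settles the case $\alpha\ge e^x$, and taking $y=(\log\alpha)^{2+\epsilon}$ settles the middle range. Sato--Tate would also work, giving the constant $3\sqrt3/(2\pi)<1$.

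Your remark that the constant does not matter for the paper's applications is correct. Only $\alpha\le 2X^A<e^X$ ever occurs, and the $\log\log$ factors are absorbed into $(\log\log B)^{O(1)}$. But that remark does not prove the lemma as stated.
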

\begin{proof}
  The formula in \eqref{mertenstype} is a special case of \cite[Lemma 3.2]{Kou}, while the other bounds follow from \cite[Lemma 2]{Szab}.
To establish \eqref{mertenstypesympower}, we apply \cite[Lemma 3.2]{Kou} together with \eqref{Lsymexp} and \eqref{sumlambdapsquare}, which yield the equality in \eqref{mertenstypesympower}.
For a detailed argument, see \cite[Lemma 2.6]{G&Zhao24-12}.
\end{proof}

\subsection{Mean Value Estimates}

  In this section, we include some mean value estimates that are needed in our proof of the main results in this paper. 
Let $\Phi$ be a non-negative smooth function such that
\begin{equation}\label{Phi}
\Phi(x)\begin{cases}
            \leq 1,\quad  x\in [1/4,1/2]\cup [1,3/2],\\
            =1,\quad  x\in[1/2,1],\\
            =0,\quad \text{otherwise }.
            \end{cases}
\end{equation}
The Mellin transform of $\Phi(x)$ is denoted by $\widehat{\Phi}(s)$, and  for any complex number $s$, it is given by the following integral:
\begin{equation}\label{Mellin}
    \widehat{\Phi}(s)=\int_0^{\infty}\Phi(x)x^{s-1}dx.
\end{equation}
  \begin{lemma}
\label{prsum}
Assume GRH.  Let $c$ be a positive odd integer, $\Phi(x)$ be a smooth function as described above, and let $k\in \rear$ with $k \geq 0$. Then, for any $\varepsilon>0$, we have
\begin{align*}
\sum_{(p,2)=1}  (\log p)\Big(1-\frac{1}{p}\Big)^{-k} & \chi_{8p}(c) \Phi \Big( \frac{p}{X} \Big)
= \delta_{c=\square}{\widehat\Phi}(1) X +
O_k(X^{1/2+\varepsilon}\log \log(c+2)),
\end{align*}
  where $\delta_{c=\square}=1$ if $c$ is a square and $\delta_{c=\square}=0$ otherwise.
\end{lemma}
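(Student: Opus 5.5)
We follow the standard route from the explicit formula for a primitive quadratic character under GRH. The first step is to factor out the square part of $c$. Write $c = c_1 c_2^2$ with $c_1$ square-free; $c_1$ and $c_2$ are odd since $c$ is. For an odd prime $p$ we have $\chi_{8p}(c)=\chi_{8p}(c_1)$ when $p\nmid c_2$, and $\chi_{8p}(c)=0$ when $p\mid c_2$. So up to an error
\[
\sum_{p\mid c}\log p\ll \log c,
\]
the sum equals the same sum with $c$ replaced by $c_1$. This error is acceptable provided we keep the bound in a form like $X^{1/2+\varepsilon}\log\log(c+2)$ or $O(\log c)$; see the obstacle discussed below.

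Next we remove the factor $(1-1/p)^{-k}$. Expand
\[
\Big(1-\frac1p\Big)^{-k}=1+O_k\Big(\frac1p\Big).
\]
The contribution of the $O_k(1/p)$ term is
\[
\ll \sum_{p\le 3X/2}\frac{\log p}{p}\ll \log X
\]
by \eqref{mertenpartialsummation}, which is admissible. It therefore suffices to treat
\[
\sum_p (\log p)\,\chi_{8p}(c_1)\,\Phi(p/X).
\]

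Now split according to whether $c_1$ is trivial.

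If $c_1=1$, i.e.\ $c$ is a square, the sum is $\sum_p (\log p)\Phi(p/X)$ minus the contribution of $p\mid c_2$. Under RH, the prime number theorem with error term $O(x^{1/2}\log^2 x)$, combined with partial summation, gives
\[
\widehat\Phi(1)X+O(X^{1/2+\varepsilon}).
\]
The prime $p=2$ is excluded trivially.

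If $c_1>1$, use quadratic reciprocity. Since $c_1$ and $p$ are odd, $\chi_{8p}(c_1)=\big(\tfrac{8p}{c_1}\big)$ is $\big(\tfrac{8}{c_1}\big)\big(\tfrac{p}{c_1}\big)$ times a sign depending on $p \bmod 4$. Hence $p\mapsto\chi_{8p}(c_1)$ is a linear combination, with coefficients of modulus at most $1$, of at most two characters $\psi(p)$ modulo $4c_1$. Each such $\psi$ is induced by a primitive, non-principal character of conductor at most $4c_1$, because $c_1>1$ is square-free and the Jacobi symbol $\big(\tfrac{\cdot}{c_1}\big)$ is non-principal. For each $\psi$ we write
\[
\sum_p (\log p)\psi(p)\Phi(p/X)=\sum_n \Lambda(n)\psi(n)\Phi(n/X)+O(X^{1/2}).
\]
The $O(X^{1/2})$ accounts for prime powers, and changing to the primitive character costs only $O(\log c)$. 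Applying Mellin inversion, we get
\[
\frac{1}{2\pi i}\int_{(2)}-\frac{L'}{L}(s,\psi)\,\widehat\Phi(s)X^s\,ds.
\]
Shift the contour to $\Re s=1/2+\varepsilon$. Under GRH, $L(s,\psi)$ has no zeros to the right of $1/2$ and no pole at $s=1$, so no residues are collected. On the new line GRH gives
\[
\frac{L'}{L}\big(\tfrac12+\varepsilon+it,\psi\big)\ll_\varepsilon \log\big(c(|t|+2)\big),
\]
or, more sharply, $\ll (\log(c|t|+2))^{1-2\varepsilon}$ uniformly. The rapid decay of $\widehat\Phi$ absorbs the factor in $t$, leaving $O(X^{1/2+\varepsilon}\log(c+2))$.

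The main obstacle is obtaining $\log\log(c+2)$ rather than $\log(c+2)$. To achieve this, we would work on the line $\Re s=1/2+1/\log\log(c+3)$ instead of $1/2+\varepsilon$. There we use the GRH bound
\[
-\frac{L'}{L}(\sigma+it,\psi)\ll (\log(c|t|+2))^{2-2\sigma}\log\log(c|t|+3)\qquad\text{for }\sigma\ge \tfrac12+\tfrac1{\log\log c}.
\]
At this value of $\sigma$ the factor $(\log c)^{2-2\sigma}$ is $O(1)$, so the bound is $\ll \log\log(c+3)$ up to a polynomial factor in $\log(|t|+2)$. The cost is $X^{\sigma}\le X^{1/2+\varepsilon}$, so the total error is $O(X^{1/2+\varepsilon}\log\log(c+2))$. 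We also need $X^{1/\log\log c}\le X^{\varepsilon}$, which holds once $c$ is large; for $c\le e^{e^{1/\varepsilon}}$ the $\varepsilon$ version of the contour shift already suffices.

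The losses from primes dividing $c$ are $\sum_{p\mid c}\log p\le\log c$, which is larger than $X^{1/2}\log\log c$ only when $c>e^{X^{1/2}}$. In the paper's applications $c$ is at most polynomial in $X$, so we would treat this loss as admissible. Alternatively, when $p\le 3X/2$ we may bound it by $\sum_{p\mid c,\,p\le 2X}\log p\ll X^{1/2}$ via the restricted support of $\Phi(p/X)$. Indeed only primes $p\ll X$ occur, so
\[
\sum_{p\mid c,\ p\le 2X}\log p\le \omega(c)\log(2X)\ll \frac{\log c}{\log\log c}\log X,
\]
and this is bounded by $X^{1/2+\varepsilon}\log\log(c+2)$ unless $c$ is huge. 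To finish in full generality, when $\log c\ge X^{1/2}$ we simply use the trivial bound $O(X)\le X^{1/2}\cdot X^{1/2}$, which is admissible in that range.
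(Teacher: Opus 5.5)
Your core argument is the standard one and is fine as far as it goes. The paper gives no argument of its own; it only cites \cite[Lemma 2.4]{G&Zhao6} and \cite[Lemma 2.8]{zhao2025}. Passing to the square-free part $c_1$ of $c$, writing $(1-1/p)^{-k}=1+O_k(1/p)$, and, for $c_1>1$, shifting $\frac{1}{2\pi i}\int_{(2)}-\frac{L'}{L}(s,\psi)\widehat\Phi(s)X^s\,ds$ to $\Re s=\frac12+\varepsilon$ under GRH proves the formula with error $O_k(X^{1/2+\varepsilon}\log(c+2))$. No reciprocity or sign is needed: $\chi_{8p}(c_1)=\leg{2}{c_1}\leg{p}{c_1}$, and $p\mapsto\leg{p}{c_1}$ is already a primitive character modulo $c_1$.

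The step meant to upgrade $\log(c+2)$ to $\log\log(c+2)$ is wrong. At $\sigma=\frac12+\frac{1}{\log\log c}$ you have $2-2\sigma=1-\frac{2}{\log\log c}$, so $(\log c)^{2-2\sigma}=e^{-2}\log c$, not $O(1)$. Choosing $\sigma$ optimally only gives roughly $X^{1/2}\min(\log c,X^{1/2})$. Your closing patch fails for the same reason: the trivial bound $O(X)$ is admissible only when $\log\log c\gg X^{1/2-\varepsilon}$, not when $\log c\ge X^{1/2}$.

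No argument can close this, because the statement is false for very large $c$. Let $c$ be the product of $q^2$ over the odd primes $q\in(X/4,3X/2)$. Then $c$ is an odd square and every prime $p$ with $\Phi(p/X)\neq 0$ divides $c$, so the left-hand side is $0$. On the other hand $\log c\asymp X$, so the right-hand side is $\widehat\Phi(1)X+O(X^{1/2+\varepsilon}\log X)$. A non-square counterexample is a prime $c\equiv 1 \pmod{8\prod_{2<q\le 2X}q}$: then $\chi_{8p}(c)=1$ for all odd $p\le 2X$, and $\log c\ll X$ by Linnik's theorem.

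What is true, and all the paper uses, is the bound for $c\le\exp(X^{\varepsilon})$. The integers $c$ arising from expanding powers of the short polynomials $\mathcal{G}_{i,j}(p)$ are at most a fixed power of $X$, so they lie in this range. There $\log(c+2)\le X^{\varepsilon}$, and your $\Re s=\frac12+\varepsilon$ argument already gives $O_k(X^{1/2+2\varepsilon})$. So drop the $\log\log$ refinement, and either restrict $c$ to this range or state the uniform result with $\log(c+2)$ in place of $\log\log(c+2)$.
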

\begin{proof}
This result can be deduced from \cite[Lemma 2.4]{G&Zhao6}; see also \cite[Lemma 2.8]{zhao2025} for details.
\end{proof}

\begin{lemma}\label{character}
Let $A>0$ be fixed. Assume the GRH.  
Then for all positive integers $n \le X^A$ with $X \ge 2$, we have
\begin{align*}
    \sum_{0<p \leq X} (\log p) \chi_{8p}(n)=X\cdot\mathds{1}_{n=\square}+O(X^{1/2}\log^2 X),
\end{align*}
where the implied constant depends only on $A$, and 
$\mathds{1}_{n=\square}$ denotes the indicator function of perfect squares.
\end{lemma}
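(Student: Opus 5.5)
Since $\log p \ge 0$, I would drop the sum over prime powers, restrict to odd primes, and use GRH to control a twisted Chebyshev function.

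\textbf{Step 1 (reduction to $\psi$).} Write $n = 2^a n'$ with $n'$ odd. For odd $p$ we have $\chi_{8p}(n) = \chi_{8p}(2)^a\chi_{8p}(n')$, and by quadratic reciprocity $\chi_{8p}(n')$ equals $\leg{p}{n'}$ times a character of $p$ modulo $8$. Likewise $\chi_{8}(p)^{\text{stuff}}$ contributes a character modulo $8$. Hence $p\mapsto \chi_{8p}(n)$, restricted to odd $p$, is a Dirichlet character $\psi_n$ of modulus dividing $8n$. Moreover $\psi_n$ is principal exactly when $n$ is a perfect square; when $n$ is a square it equals $1$ for all $p\nmid n$.

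\textbf{Step 2 (square case).} If $n=\square$, the sum equals
\[
\sum_{p\le X,\ p\nmid 2n}\log p = \theta(X) + O(\log n) .
\]
Here $\log n\ll \log X$ because $n\le X^A$, and the prime $2$ contributes only $O(1)$. Under RH, $\theta(X) = X + O(X^{1/2}\log^2 X)$.

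\textbf{Step 3 (non-square case).} Let $\psi_n^*$ be the primitive character inducing $\psi_n$; its conductor $q\mid 8n$ satisfies $q\le 8X^A$. Then
\[
\sum_{p\le X}(\log p)\psi_n(p) = \sum_{p\le X}(\log p)\psi_n^*(p) + O(\omega(8n)\log X),
\]
where $\omega(8n)\ll \log X$. Passing from primes to the von Mangoldt sum $\psi(X,\psi_n^*)$ costs $O(X^{1/2})$ from prime powers. The standard explicit formula under GRH gives
\[
\psi(X,\chi) \ll X^{1/2}\log^2(qX)
\]
for any non-principal primitive $\chi$ mod $q$ (see e.g.\ Montgomery--Vaughan). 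Since $q \le 8X^A$, we get $\log(qX)\ll_A \log X$, which yields the error $O(X^{1/2}\log^2X)$.

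\textbf{Main obstacle.} The only delicate point is the bookkeeping in Step 1. One must check that $\chi_{8p}(n)$ is genuinely a Dirichlet character in $p$ of conductor $\ll n$, which requires handling the $2$-part and the reciprocity sign uniformly. One must also check that it is nonprincipal precisely when $n$ is not a square. For the latter, for $n = 2^a n'$ non-square, either $n'$ has an odd prime to an odd power, giving a nontrivial Jacobi symbol, or $a$ is odd, contributing the nontrivial $\chi_8$-type factor.

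Everything else is the GRH bound for $\psi(X,\chi)$ with uniformity in $q$. This is exactly why the implied constant depends only on $A$.
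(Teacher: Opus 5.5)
Your structure is the same as the paper's: split into $n=\square$ and $n\neq\square$, treat squares via $\theta(X)=X+O(X^{1/2}\log^2X)$ under RH, and use GRH otherwise. Your $O(\log n)$ for the removed primes is actually more accurate than the paper's $O(\omega(n))$. The one real difference is in the non-square case. The paper just quotes a known GRH estimate there, whereas you derive it from $\psi(x,\chi)\ll x^{1/2}\log^2(qx)$ after identifying $p\mapsto\chi_{8p}(n)$ with a character of conductor dividing $8n$. That is self-contained, and it makes the dependence on $A$ visible through $\log(qX)\ll_A\log X$.

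However, the bookkeeping in Step 1 is wrong at the prime $2$, and this is a genuine error rather than a cosmetic one. With the paper's convention $\chi_c(n)=\leg{c}{n}$, one has $\chi_{8p}(2)=\leg{8p}{2}=0$. So for even $n$ every summand vanishes, including $p=2$ since $\leg{16}{2}=0$. There is no ``$\chi_8$-type factor'' as claimed in your last paragraph. For odd $n$, directly $\chi_{8p}(n)=\leg{2}{n}\leg{p}{n}$: a fixed sign times the Jacobi symbol $\leg{\cdot}{n}$ evaluated at $p$. That is a character modulo $n$, principal if and only if $n$ is a square, and no reciprocity or character of $p$ modulo $8$ is involved.

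Consequently your assertion that $\psi_n$ is principal and equals $1$ at every $p\nmid n$ whenever $n=\square$ is false for even squares. The lemma itself is false there too: for $n=4$ the left-hand side is identically $0$, not $X+O(X^{1/2}\log^2X)$. The paper's proof has the same slip, since it claims $\chi_{8p}(n)=1$ for odd $p\nmid n$ whenever $n$ is a square.

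The correct statement has main term $X\cdot\mathds{1}_{n=\square,\,2\nmid n}$, i.e.\ one should assume $n$ odd, as in Lemma~\ref{prsum}. With that correction your argument goes through. Even $n$ is trivial, odd squares are your Step~2, and odd non-squares are your Step~3; the constant sign $\leg{2}{n}$ does not affect the bound.
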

\begin{proof}
  If $n$ is a perfect square, then $\chi_{8p}(n) = 1$ for all odd primes $p \nmid n$. Hence
\begin{align*}
  \sum_{0<p \leq X} (\log p) \chi_{8p}(n)=
  \sum_{\substack{0<p \leq X\\ p\nmid n}} (\log p) =\sum_{0<p \leq X} (\log p) +O(\omega(n)),
\end{align*}
where $\omega(n)$ denotes the number of distinct prime divisors of $n$. 
Since $\omega(n) \ll \log n$ and $n \le X^A$, it follows that $\omega(n) \ll_A \log X$. By the prime number theorem under GRH,
\begin{align*}
\sum_{0 < p \le X} (\log p) = X + O(X^{1/2}\log^2 X),
\end{align*}
and therefore
\begin{align*}
\sum_{0<p \leq X} (\log p) \chi_{8p}(n)=X+O(X^{1/2}\log^2 X).
\end{align*}
 If $n$ is not a perfect square, then by \cite[Lemma~2]{A&B} (which holds under GRH),
 \begin{align*}
\sum_{0<p \leq X} (\log p) \chi_{8p}(n)\ll X^{1/2}\log^2 X.
\end{align*}
Combining the two cases gives the desired result.
  \end{proof}

Our next result concerns the analytical properties of certain Dirichlet series.

\begin{lemma}\label{Dirichletseries}
  Define
  \begin{align*}
  \begin{split}
g(n_1, \ldots, n_{m}):= & \prod_{1\leq i\leq m}\lambda_f(n_i)\cdot \mathds{1}_{n_1\cdots n_m=\square}, \\
g^+(n_1, \ldots, n_{m}):= & \prod_{1\leq i\leq m}|\lambda_f(n_i)|\cdot \mathds{1}_{n_1\cdots n_m=\square}.
\end{split}
  \end{align*}
We have
\begin{align}
 \label{Gdef}
  \begin{split}
G({\bf s}):=& \sum_{n_1, \ldots, n_{m}=1}^{\infty} \frac{g(n_1, \ldots, n_{m})}{n_1^{s_1} \cdots n_m^{s_{m}}} \\
= & \left( \prod_{1 \leq j \leq m} L(2s_j, \operatorname{sym}^2f ) \prod_{1 \leq l_1 < l_2 \leq m} \zeta(s_{l_1} + s_{l_2})L (s_{l_1} + s_{l_2},\operatorname{sym}^2f)\right) E(s_1, \ldots, s_{m}), \\
 G^+({\bf s}):=& \sum_{n_1, \ldots, n_{m}=1}^{\infty} \frac{g^+(n_1, \ldots, n_{m})}{n_1^{s_1} \cdots n_m^{s_{m}}}\\
  =&  \left( \prod_{1 \leq j \leq m} L(2s_j, \operatorname{sym}^2f ) \prod_{1 \leq l_1 < l_2 \leq m} \zeta(s_{l_1} + s_{l_2})L (s_{l_1} + s_{l_2},\operatorname{sym}^2f)\right) E^+(s_1, \ldots, s_{m}),
\end{split}
\end{align}
where \( E(s_1, \ldots, s_{m}), E^+(s_1, \ldots, s_{m})  \) are Dirichlet series absolutely convergent for \( \Re(s_j) > 1/4 \) for \( 1 \leq j \leq m \).
\end{lemma}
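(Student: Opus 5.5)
The Dirichlet series $G$ and $G^+$ have multiplicative coefficients in the vector $(n_1,\ldots,n_m)$, so both factor as Euler products. I would write
\[
G({\bf s})=\prod_p G_p({\bf s}),\qquad G_p({\bf s})=\sum_{\substack{a_1,\ldots,a_m\ge 0\\ a_1+\cdots+a_m\ \text{even}}}\frac{\prod_{i}\lambda_f(p^{a_i})}{p^{a_1s_1+\cdots+a_ms_m}}.
\]
This holds because $n_1\cdots n_m=\square$ exactly when, for every $p$, the sum of the exponents $a_i=v_p(n_i)$ is even. I then define $E({\bf s})$ as $G({\bf s})$ divided by the displayed product of $L$-functions, and compare Euler factors prime by prime. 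It is convenient to fix $\sigma=\min_j\Re(s_j)>1/4$ and work in that region throughout.

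\textbf{Step 1: low-order terms of $G_p$.} Terms with $\sum a_i=0$ give $1$. Terms with $\sum a_i=2$ come in two kinds:
\begin{itemize}
\item $a_j=2$ for a single $j$, giving $\lambda_f(p^2)p^{-2s_j}$;
\item $a_{l_1}=a_{l_2}=1$ for a pair $l_1<l_2$, giving $\lambda_f(p)^2p^{-s_{l_1}-s_{l_2}}$.
\end{itemize}
All terms with $\sum a_i\ge 4$ are bounded by $\ll_m p^{-4\sigma}$, using $|\lambda_f(p^a)|\le a+1$ from \eqref{lambdabound} and the fact that there are polynomially many exponent vectors. So
\[
G_p=1+\sum_j\frac{\lambda_f(p^2)}{p^{2s_j}}+\sum_{l_1<l_2}\frac{\lambda_f(p)^2}{p^{s_{l_1}+s_{l_2}}}+O\big(p^{-4\sigma}\big).
\]

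\textbf{Step 2: matching against the $L$-factors.} By \eqref{Lsymexp}, the Euler factor of $L(2s_j,\operatorname{sym}^2f)$ is $1+\lambda_f(p^2)p^{-2s_j}+O(p^{-4\sigma})$. By \eqref{sumlambdapsquare}, $\lambda_f(p)^2=\lambda_f(p^2)+1$, so the product of the Euler factors of $\zeta$ and $L(\cdot,\operatorname{sym}^2f)$ at $s_{l_1}+s_{l_2}$ equals
\[
1+\frac{1+\lambda_f(p^2)}{p^{s_{l_1}+s_{l_2}}}+O\big(p^{-4\sigma}\big)=1+\frac{\lambda_f(p)^2}{p^{s_{l_1}+s_{l_2}}}+O\big(p^{-4\sigma}\big).
\]
Cross terms between different factors are of size $p^{-4\sigma}$. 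Taking the ratio, $E_p({\bf s})=1+O_m(p^{-4\sigma})$ with the implied constant uniform in $p$, since $\lambda_f(p)$ is bounded. Hence $\prod_pE_p$ converges absolutely for $\sigma>1/4$. Equivalently, $E$ is an absolutely convergent Dirichlet series there: its coefficients are supported on vectors with $\sum a_i\ge 4$ at each prime, and they are bounded by a multiplicative function of polynomial growth.

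\textbf{Step 3: the case $G^+$.} For $G^+$ the same computation applies with $|\lambda_f(p^{a})|$ in place of $\lambda_f(p^a)$. Since $\lambda_f(p)^2=|\lambda_f(p)|^2$, the pair terms are unchanged. The diagonal terms change from $\lambda_f(p^2)$ to $|\lambda_f(p^2)|$, and this is where the statement needs care. As literally written, the factor $L(2s_j,\operatorname{sym}^2f)$ contributes $\lambda_f(p^2)p^{-2s_j}$, so $E^+_p$ carries an extra term $(|\lambda_f(p^2)|-\lambda_f(p^2))p^{-2s_j}$. That term is only $O(p^{-2\sigma})$, which gives absolute convergence for $\Re(s_j)>1/2$ but not down to $1/4$.

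The delicate point is therefore Step 3. I would handle it in one of two ways:
\begin{itemize}
\item read the $G^+$ identity as holding for $\Re(s_j)>1/2$, which suffices for later contour shifts near $\Re(s)=1/2$;
\item or absorb the diagonal discrepancy into $E^+$, using that $\prod_j L(2s_j,\operatorname{sym}^2f)$ and its absolute analogue are both holomorphic and nonvanishing for $\Re(s_j)>1/2$.
\end{itemize}
The essential content, and the part needed later, is the pair structure $\zeta(s_{l_1}+s_{l_2})L(s_{l_1}+s_{l_2},\operatorname{sym}^2f)$, which is exact in both cases by \eqref{sumlambdapsquare}.
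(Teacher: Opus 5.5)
Your treatment of $G$ is the same local Euler-factor comparison that the paper takes over from \cite[Lemma 2.4]{G&ZY-25}, and it is correct. The degree-two terms cancel exactly because $\lambda_f(p)^2=\lambda_f(p^2)+1$ by \eqref{sumlambdapsquare}. Every inverse local factor you multiply by is a polynomial in the $p^{-s_j}$. Apart from its constant term, each $E_p$ lives in total degree at least $4$ with polynomially bounded coefficients, and that is what absolute convergence for $\Re(s_j)>1/4$ requires.

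Your objection to the $G^+$ half is also right, and it is not a weakness of your method: that part of the lemma is false as stated.
\begin{itemize}
\item The coefficient of $E^+$ at $(n_1,\dots,n_m)=(p^2,1,\dots,1)$ is exactly $|\lambda_f(p^2)|-\lambda_f(p^2)=2\max(0,1-\lambda_f(p)^2)$. This is at least $3/2$ whenever $|\lambda_f(p)|\le 1/2$.
\item By the Sato--Tate theorem (level-one eigenforms have no CM), $|\lambda_f(p)|\le 1/2$ holds on a set of primes of positive density.
\item Hence $\sum_p(|\lambda_f(p^2)|-\lambda_f(p^2))p^{-2\sigma}$ diverges for every $\sigma\le 1/2$, and $E^+$ fails to converge absolutely at every point with some $\Re(s_j)\le 1/2$.
\end{itemize}
The paper's one-line proof offers nothing that gets around this.

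Your two proposed repairs are the same statement: absolute convergence of $E^+$ only for $\Re(s_j)>1/2$. Your assertion that this suffices for the later contour shifts is not justified. As $s_j\to 1/2^{+}$, this $E^+$ grows like $\zeta(2s_j)^{c}$, where $c>0$ is the Sato--Tate mean of $|\lambda_f(p^2)|$. So any bound for $P_f^+$ in Lemma \ref{sumoversquare} extracted from it picks up extra positive powers of $\log Y$ beyond $(\log Y)^{m(m-3)/2}$. Only the $G$ identity should be regarded as established, and any downstream use of $G^+$ needs a separate argument.
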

\begin{proof}
 By adapting the proof of \cite[Lemma 2.4]{G&ZY-25}, we can obtain this result.
\end{proof}

   We now apply Lemma \ref{Dirichletseries} to establish a result on sums of $\prod_{i=1}^{m}\lambda_f(n_i)$ restricted to the set $n_1\cdots n_m=\square$. Let $W$ be a non-negative smooth function. We also denote by $\widehat{W}$ the Mellin transform of $W$ and note that repeated integration by parts implies that for any integer $E\geq 0$ and $\Re(s)\geq 1/2$,
\begin{align*}
\widehat{W}(s)\ll (1+|s|)^{-E}.
\end{align*}
\begin{lemma}
\label{sumoversquare}
  With the notation as above, let $\beta_i >0$ and define for $m \geq 1$,
\begin{equation*}
 P_f(Y^{\beta_1}, \cdots, Y^{\beta_m}):=\sum_{\substack{n_1,\dots,n_m \geq 1\\ n_1\cdots n_m=\square}} \prod_{i=1}^{m}\lambda_f(n_i)W\Big(\frac{n_i}{Y^{\beta_i}}\Big).
\end{equation*}
  Then we have for $m \geq 4$,
\begin{align}
\label{Pfest}
 P_f(Y^{\beta_1}, \cdots, Y^{\beta_m}) \asymp Y^{\sum^m_{i=1}\beta_i/2}(\log Y)^{\frac {m(m-3)}{2}}.
\end{align}
\end{lemma}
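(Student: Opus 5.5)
The plan is to express $P_f$ as a multiple Mellin integral and evaluate it by shifting contours, using the factorization of Lemma \ref{Dirichletseries}. By Mellin inversion, for $c>1/2$,
\begin{align*}
P_f(Y^{\beta_1},\dots,Y^{\beta_m})=\frac{1}{(2\pi i)^m}\int_{(c)}\cdots\int_{(c)} G(s_1,\dots,s_m)\prod_{i=1}^m \widehat W(s_i)Y^{\beta_i s_i}\,ds_1\cdots ds_m .
\end{align*}
We change variables $s_i=1/2+w_i$. The function $L(2s_j,\operatorname{sym}^2 f)$ is then holomorphic and nonvanishing near $w_j=0$, and $E$ converges absolutely in a neighbourhood of $w=0$. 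The only singular factors are the $\binom m2$ factors $\zeta(1+w_{l_1}+w_{l_2})$, each with a simple pole along $w_{l_1}+w_{l_2}=0$. The factors $L(1+w_{l_1}+w_{l_2},\operatorname{sym}^2f)$ are regular and nonzero there by Shimura's result. Hence near the origin the integrand is
\[
H(w)\prod_{l_1<l_2}(w_{l_1}+w_{l_2})^{-1}\, Y^{\sum\beta_i/2}\,Y^{\sum \beta_i w_i}
\]
with $H$ holomorphic and $H(0)=\widehat W(1/2)^m L(1,\operatorname{sym}^2 f)^{\binom m2}L(1,\operatorname{sym}^2f)^{m}\cdots E(1/2,\dots,1/2)$, which is nonzero generically.

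Heuristically, the leading term comes from the $\binom m2$ polar hyperplanes, of which only $m$ integrations can be "used up." Writing $\prod(w_{l_1}+w_{l_2})^{-1}$, the standard homogeneity count (as in Munsch--Toma \cite{MT25} and \cite{G&ZY-25}) gives a main term $Y^{\sum\beta_i/2}\,\mathcal{P}(\log Y)$. Here $\mathcal P$ is a polynomial of degree $\binom m2-m=m(m-3)/2$, obtained from the $m$-fold residue integral
\[
\frac{1}{(2\pi i)^m}\oint\cdots\oint \frac{e^{L\sum\beta_iw_i}}{\prod_{l_1<l_2}(w_{l_1}+w_{l_2})}\,dw,\qquad L=\log Y,
\]
which by scaling $w\mapsto w/L$ equals $L^{\binom m2-m}$ times a constant $C_m(\beta)$. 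To carry this out concretely, I would shift the $w_i$ contours to $\Re w_i=-\delta$ for small $\delta$ (below $1/4$, staying in the region where $E$ converges and where, under GRH or just zero-free regions near $\Re=1$, the $L$-factors are controlled). I would use \eqref{Phibound} for decay in imaginary parts and the polynomial growth of $\zeta$ and $L(\cdot,\operatorname{sym}^2f)$ near the $1$-line. The crossed residues are handled by the local analysis above, and the remaining integrals contribute $O(Y^{\sum\beta_i/2-\delta'})$. An alternative that avoids delicate multi-variable residue bookkeeping is to replace exact residues by the bound that the integral over small circles $|w_i|=1/L$ equals $L^{m(m-3)/2}(C_m(\beta)H(0)+O(1/L))$.

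The main obstacle is the lower bound: showing $C_m(\beta)\neq 0$, indeed $C_m(\beta)>0$. The upper bound $\ll$ follows from the size count alone. For positivity, I would use a positive integral representation. Since $\frac{1}{w_{l_1}+w_{l_2}}=\int_0^\infty e^{-t(w_{l_1}+w_{l_2})}dt$ for $\Re w>0$, the constant becomes the volume of the polytope
\[
\Big\{(t_{l_1l_2})\ge 0:\ \sum_{l\neq i} t_{il}=\beta_i\ \ \forall i\Big\}
\]
(the Fourier transform of the exponential $e^{L\sum\beta_iw_i}$ picks out this constraint). This is a fibre of $\binom m2$ nonnegative variables under $m$ linear conditions. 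It has nonempty interior of dimension $\binom m2-m$ exactly when $m\ge4$ and the $\beta_i$ satisfy the needed triangle-type inequalities, which hold for nearly equal $\beta_i$ as in the intended applications. Hence $C_m(\beta)>0$. The nonvanishing of $H(0)$ also needs to be checked, namely $E(1/2,\dots,1/2)\neq0$, via its Euler product with local factors $1+O(p^{-2})$ after factoring out the $L$-functions. Since $W\ge0$ gives $\widehat W(1/2)>0$, this yields $P_f\asymp Y^{\sum\beta_i/2}(\log Y)^{m(m-3)/2}$.
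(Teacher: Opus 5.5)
\textbf{Overall.} Your route is the paper's: Mellin inversion, the factorization of $G$ from Lemma \ref{Dirichletseries}, and an analysis of the $\binom m2$ polar hyperplanes $w_{l_1}+w_{l_2}=0$ at the origin, which the paper simply delegates to \cite[Lemma 2.5]{G&ZY-25}. Your identification of the leading coefficient as $H(0)$ times the volume $C_m(\beta)$ of the fibre polytope $\{t_{ij}\ge0:\sum_{l\neq i}t_{il}=\beta_i\}$ is correct. It also exposes a defect in the statement itself.

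\textbf{The condition on $\beta$.} The condition $\beta_i<\sum_{j\neq i}\beta_j$ that you need for $C_m(\beta)>0$ is not a technicality. Without it the lower bound in \eqref{Pfest} is false, so the lemma cannot be proved for arbitrary $\beta_i>0$, and you should add this hypothesis explicitly. Your remark that it holds ``in the intended applications'' is wrong. The bound \eqref{H1bound} in the proof of Theorem \ref{lower} is the lemma with $\beta=(1,\varepsilon,\dots,\varepsilon)$ and $\varepsilon$ small. You can see the failure directly there:
\begin{itemize}
\item Fix $n_2,\dots,n_m\ll Y^{\varepsilon}$ and let $r$ be the squarefree part of $n_2\cdots n_m$, so that $n_1=rk^2$.
\item Then $\sum_k\lambda_f(rk^2)k^{-2s}$ equals $L(2s,\operatorname{sym}^2f)/\zeta(4s)$ times Euler factors at $p\mid r$ of total size $O(r^{\delta})$.
\item Hence it is holomorphic in $\Re s>1/4$ with no pole at $s=1/2$. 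In the character-sum setting of \cite{MT25}, by contrast, $\zeta(2s)$ supplies that pole, which is why every $\beta$ works there.
\item Shifting to $\Re s=1/4+\delta$ gives $P_f\ll Y^{1/4+(m-1)\varepsilon+\delta'}=o\big(Y^{1/2+(m-1)\varepsilon/2}\big)$ once $(m-1)\varepsilon<1/2$.
\end{itemize}
The upper bound in \eqref{Pfest}, on the other hand, does hold for every $\beta$.

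\textbf{First repair: the contour.} The ``small circles $|w_i|=1/L$'' shortcut fails. That torus meets the polar set (take $w_2=-w_1$). With unequal radii, a torus integral computes an iterated residue, which is a polynomial in $\beta$ and cannot detect the cone condition above. Instead:
\begin{itemize}
\item Stay on $\Re w_i=c/L$, where $|w_{l_1}+w_{l_2}|\ge 2c/L$ and $|Y^{\sum\beta_iw_i}|\ll1$, and rescale $w=v/L$.
\item Use that $\int_{\mathbb R^m}\prod_{i<j}(1+|y_i+y_j|)^{-1}\,dy<\infty$ for $m\ge4$. By power counting, on every nonzero subspace $U$ more than $\dim U$ of the forms $y_i+y_j$ are nonvanishing; this fails for $m=3$.
\item This gives the upper bound for every $\beta$.
\item Replacing the holomorphic factor by its value at $0$ then gives the main term $(\log Y)^{m(m-3)/2}C_m(\beta)H(0)$ up to lower-order terms, with $C_m(\beta)$ given exactly by your Laplace-transform computation.
\end{itemize}
Strata away from the origin, such as $w_1+w_2=0=w_3+w_4$ at nonzero height, contribute lower powers of $\log Y$ rather than a power saving. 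That is harmless, but it is not what your sketch says.

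\textbf{Second repair: the sign of $H(0)$.} For $\gg$ you need $H(0)>0$, not merely $H(0)\neq0$. This holds because $L(1,\operatorname{sym}^2f)>0$, and at $s_j=1/2$ the local factor of $G$ is $\tfrac12\big[(1-\lambda_f(p)p^{-1/2}+p^{-1})^{-m}+(1+\lambda_f(p)p^{-1/2}+p^{-1})^{-m}\big]>0$.

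\textbf{Minor remarks.} No GRH or zero-free region is needed anywhere, since the $L$-functions appear only in the numerator. Working with $P_f$ alone also spares you the paper's $P_f^+$, which is fortunate. The $p^{-2s_j}$ coefficient of $E^+$ is $|\lambda_f(p^2)|-\lambda_f(p^2)$, which is $\gg1$ on a positive proportion of primes. So, contrary to Lemma \ref{Dirichletseries}, $E^+$ is not absolutely convergent at $s_j=1/2$.
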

\begin{proof}
  We define for $m \geq 1$,
\begin{align*}
 P^+_f(Y^{\beta_1}, \cdots, Y^{\beta_m}):=& \sum_{\substack{n_1,\dots,n_m \geq 1\\ n_1\cdots n_m=\square}}  \prod_{i=1}^{m}|\lambda_f(n_i)|W\Big(\frac{n_i}{Y^{\beta_i}}\Big).
\end{align*}

   We then apply Mellin inversion to the variables $n_1,\dots, n_{m}$ to see that
\begin{align*}
\begin{split}
P_f(Y^{\beta_1}, \cdots, Y^{\beta_m})=& \Big(\frac{1}{2\pi i}\Big)^{m}\int_{(2)}\cdots\int_{(2)}Y^{\sum^m_{i=1}u_i\beta_i}G(u_1,\dots,u_{m})
\widehat{W}(u_1)\cdots\widehat{W}(u_{m})
\dif u_1\cdots \dif u_{m}, \\
P^+_f(Y^{\beta_1}, \cdots, Y^{\beta_m})=& \Big(\frac{1}{2\pi i}\Big)^{m}\int_{(2)}\cdots\int_{(2)}Y^{\sum^m_{i=1}u_i\beta_i}G^+(u_1,\dots,u_{m})
\widehat{W}(u_1)\cdots\widehat{W}(u_{m})
\dif u_1\cdots \dif u_{m},
\end{split}
\end{align*}
where $G, G^+$ are defined in \eqref{Gdef}. Next, applying Lemma \ref{Dirichletseries}, we write the Dirichlet series $G(\mathbf{s})$ and $G^+(\mathbf{s})$ as products of zeta and $L$-functions multiplied by regular factors $E(\mathbf{s})$ and $E^+(\mathbf{s})$, respectively. As these product representations are identical in form to those in \cite[Lemma 2.4]{G&ZY-25}, and the remainder factors $E(\mathbf{s}), E^+(\mathbf{s})$ retain good convergence properties in $\Re(s_j) > 1/4$, the argument follows directly from the proof of \cite[Lemma 2.5]{G&ZY-25}.
\end{proof}

\subsection{Upper bound for $\log |L(1/2+it, f \otimes \chi_{8p})|$ }

\begin{lemma}
\label{logupper}
   Assume GRH for $L(s, f \otimes\chi_{8p})$ for all odd primes $p$. Let $k$ be a positive integer and let $A,a_1,a_2,\ldots, a_{k}$ be fixed positive real constants and $x\geq 2$.  Set $a:=a_1+\cdots+ a_{k}$.  Suppose $X$ is a large real number and $t_1,\ldots, t_{k}$ are fixed real numbers with $|t_i|\leq X^A$. For any integer $n$, let
   \begin{align}\label{defh}
h(n):=\frac{1}{2}  \Re\Big(\sum^{k}_{m=1}a_mn^{-it_m}\Big).
   \end{align}
 Then, we have
\begin{align}
\label{mainupperquad}
\begin{split}
 &\sum^{k}_{m=1} a_m \log |L(1/2+it_m, f \otimes \chi_{8p})| \\
    \leq & 2 \sum_{q\leq x} \frac{h(q)\lambda_f(q)\chi_{8p}(q)}{q^{1/2+ 1/\log x}}\frac{\log (x/q)}{\log x}
    +\sum_{q\leq x^{1/2}} \frac{h(q^2)(\lambda_f(q^2)-1)}{q}\\
    &\quad- 3a\log \Big(1-\frac{1}{p}\Big)+2(A+1)a\frac{\log X}{\log x}+ O\left( \frac {\log X}{x^{1/2}\log x}+1 \right).
\end{split}
\end{align}
\end{lemma}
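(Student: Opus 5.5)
The plan is to follow the Soundararajan--Chandee approach to GRH upper bounds for $\log|L|$, in the form used for twisted modular $L$-functions in \cite[Lemma 2.9]{G&Zhao24-12}. We apply it to each $\log|L(1/2+it_m, f\otimes\chi_{8p})|$ and then sum with the weights $a_m$. The conductor of $f\otimes\chi_{8p}$ is $(8p)^2$, and the archimedean factor depends on $t_m$. Since $|t_m|\le X^A$ and $p\le X$, the logarithm of the analytic conductor at height $t_m$ is at most $2(A+1)\log X+O(1)$ (two gamma factors and $\log (64p^2)$).

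For a single $t$ the key input is the following inequality, valid under GRH:
\[
\log|L(1/2+it,f\otimes\chi_{8p})|\le \Re\sum_{n\le x}\frac{\Lambda_{f\otimes\chi_{8p}}(n)}{n^{1/2+1/\log x+it}\log n}\frac{\log(x/n)}{\log x}+\frac{\log \mathfrak{q}(t)}{\log x}+O\Big(\frac1{\log x}\Big).
\]
Here $\mathfrak q(t)$ is the analytic conductor. To obtain it, write $\Re\, L'/L(\sigma+it)$ via the Hadamard product, use positivity of the zero sum for $\sigma>1/2$ to get monotonicity, integrate from $1/2$ to $\sigma_0=1/2+1/\log x$, and use the explicit-formula expansion of $-L'/L$ weighted by $\log(x/n)$. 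Since $p$ is prime, $f\otimes\chi_{8p}$ is primitive apart from the local factor at $p$, where $\chi_{8p}(p)=0$. The Euler factor at $p$ of $L(s,f)$ is missing from the twist. Restoring it, or equivalently bounding the corresponding conductor correction, contributes $O(\log(1-1/p)^{-1})$ uniformly. Keeping track of the explicit constant as in \cite{G&Zhao24-12} gives the term $-3\log(1-1/p)$ per unit of weight.

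Next we split the prime-power sum. For $n=q$ prime, $\Lambda(q)/\log q=\lambda_f(q)\chi_{8p}(q)$, and summing $a_m\Re(q^{-it_m})$ over $m$ produces $2h(q)$. For $n=q^2$, the coefficient is $(\alpha_q^2+\beta_q^2)\chi_{8p}(q)^2/2=(\lambda_f(q^2)-1)/2$ for $q\ne p$, by \eqref{sumlambdapsquare}. Together with $\sum_m a_m\Re(q^{-2it_m})=2h(q^2)$, this gives $\sum_{q\le x^{1/2}}h(q^2)(\lambda_f(q^2)-1)q^{-1-2/\log x}\log(x/q^2)/\log x$. Replacing the weight $q^{-2/\log x}\log(x/q^2)/\log x$ by $1$ costs $O(\sum_{q\le x^{1/2}}\log q/(q\log x))=O(1)$ by \eqref{mertenpartialsummation}. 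The term $q=p$ is $O(1/p)$ and is absorbed.

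Higher prime powers $q^j$ with $j\ge3$ converge absolutely and contribute $O(1)$. The conductor terms sum to $2(A+1)a\log X/\log x+O(1/\log x)$.

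The main obstacle is the first step: deriving the Chandee--Soundararajan inequality with explicit constants, including the secondary error $O(\log X/(x^{1/2}\log x))$. That error comes from bounding the contribution of the zero sum against the weight $x^{\sigma-1/2}$ integrated along the horizontal segment. I would take this over verbatim from the proof of \cite[Lemma 2.9]{G&Zhao24-12}, which treats $\chi_{8d}$ with squarefree $d$, and check that only the local factor at $p$ requires the adjustment described above.
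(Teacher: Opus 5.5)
Your argument is correct, and in substance it is the paper's argument. The paper does not re-derive anything. It quotes \cite[Corollary 2.11]{G&Zhao24-12}, which is the Chandee--Soundararajan bound you reconstruct, with $\chi_{8p}(q^2)$ still present in the square sum, and then removes that character. Your re-derivation of the prime terms, the $q^2$ terms (including replacing the weight by $1$ at cost $O(1)$), the higher prime powers and the conductor term $2(A+1)a\log X/\log x$ is fine. You also do not need to reproduce the secondary error $O(\log X/(x^{1/2}\log x))$, since your $O(1)$ is stronger.

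What is wrong is your explanation of the term $-3a\log(1-1/p)$.
\begin{itemize}
\item There is no missing Euler factor and no conductor correction. $\chi_{8p}$ is primitive of conductor $8p$, so $f\otimes\chi_{8p}$ is a newform of level $64p^2$. Its $L$-function is exactly $\sum_n \lambda_f(n)\chi_{8p}(n)n^{-s}$, and its local factors at $2$ and $p$ are genuinely trivial. This is the same situation as for squarefree $d$ in \cite{G&Zhao24-12}; nothing is special about primes here. "Restoring" the Euler factor of $L(s,f)$ at $p$ would change the function being bounded.
\item In the paper the term comes from the step you dismiss in one line. Since $\chi_{8p}(q^2)=1$ for $q\nmid 2p$, adding back the missing $q=p$ term gives
\[
-\frac{h(p^2)(\lambda_f(p^2)-1)}{p}\le \frac{3a}{p}\le -3a\log\Big(1-\frac1p\Big),
\]
using $y\le -\log(1-y)$ for $0<y<1$.
\item Since $-3a\log(1-1/p)\ge 0$, and is $O(1)$ for $p\ge 3$, your derivation already proves the inequality without this term. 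The "restoration" paragraph should be deleted, not made precise.
\end{itemize}

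One small omission: $\chi_{8p}(4)=0$, so the $q=2$ term is also missing from the character-weighted square sum. It is likewise $O(1)$.
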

\begin{proof}
  By \cite[Corollary 2.11]{G&Zhao24-12}, we have
  \begin{align}\label{GZ}
    \begin{split}
& \sum^{k}_{m=1}  a_m\log |L(1/2+it_m,f \otimes \chi_{8p})| \\
    \leq & 2 \sum_{q\leq x} \frac{h(q)\chi_{8p}(q)\lambda_f(q)}{q^{1/2+1/\log x}}\frac{\log x/q}{\log x}
    +\sum_{q\leq x^{1/2}} \frac{h(q^2)\chi_{8p}(q^2)(\lambda_f(q^2)-1)}{q}\\
    &\qquad+2(A+1)a\frac{\log X}{\log x}+O\left( \frac {\log X}{x^{1/2}\log x}+1 \right).
    \end{split}
\end{align}
For the second term on the right-hand side of the above inequality, we obtain
\begin{align}
\label{sump}
 \sum_{q\leq x^{1/2}} \frac{h(q^2)\chi_{8p}(q^2)(\lambda_f(q^2)-1)}{q}=& \sum_{q\leq x^{1/2}} \frac{h(q^2)(\lambda_f(q^2)-1)}{q}
 -\frac{h(p^2)(\lambda_f(p^2)-1)}{p} \nonumber\\
 \leq & \sum_{q\leq x^{1/2}} \frac{h(q^2)(\lambda_f(q^2)-1)}{q}+ \frac{3a}{p}\nonumber\\
 \leq & \sum_{q\leq x^{1/2}} \frac{h(q^2)(\lambda_f(q^2)-1)}{q}- 3a\log \Big(1-\frac{1}{p}\Big).
\end{align}
Here the final step uses the inequality $x\leq-\log(1-x)$ for any $0<x<1$. Inserting  (\ref{sump}) into (\ref{GZ}), we obtain the result.
\end{proof}

\subsection{Shifted moments of quadratic twists of modular $L$-functions }\label{shiftedmoments}
  To prove the upper bound in Theorem \ref{upper}, we present a prime-modular analogue of \cite[Theorem 1.2]{G&Zhao24-12}, which concerns upper bounds on shifted moments of quadratic twists of modular $L$-functions. The proof of the following theorem  relies  crucially on \cite{Szab} and on sharp upper bounds for shifted moments of the corresponding $L$-functions under GRH, obtained via a method of Soundararajan \cite{Sound2009} and its refinement by  Harper \cite{Harper}.  The proof of the following theorem will be given in Section \ref{proof2.13}.
\begin{theorem}\label{sumL}
With the notation as above and assuming the truth of GRH. Let $k\geq 1$ be a fixed integer and $A>0$ a fixed constant. Suppose $X$ is a large real number and ${\bf t}=(t_1,\dots,t_k)$ is a real $k$-tuple with $|t_j|\leq X^A$. Then
\begin{align*}
  &\sum_{2<p \leq X}(\log p) \prod_{1\leq j\leq k}\big| L\big(1/2+it_j,f \otimes \chi_{8p} \big) \big|^{a_j}\\
  \ll &
  X(\log X)^{\frac{1}{4}\sum_{j=1}^k a_j^2}
   \prod_{1\leq j<l\leq k}\Big|\zeta\Big(1+i(t_j-t_l)+\frac{1}{\log X}\Big)\Big|^{a_ja_l/2}\Big|\zeta\Big(1+i(t_j+t_l)+\frac{1}{\log X}\Big)\Big|^{a_ja_l/2} \\
  &\times\prod_{1\leq j\leq k} \Big|\zeta\Big(1+2it_j+\frac{1}{\log X}\Big)\Big|^{a^2_j/4-a_j/2} \prod_{1\leq j<l\leq k}\Big|L\Big(1+i(t_j-t_l)+\frac{1}{\log X}, \operatorname{sym}^2f\Big)\Big|^{a_ja_l/2}\\
  &\times\Big|L\Big(1+i(t_j+t_l)+\frac{1}{\log X}, \operatorname{sym}^2f\Big)\Big|^{a_ja_l/2} \prod_{1\leq j\leq k} \Big|L\Big(1+2it_j+\frac{1}{\log X}, \operatorname{sym}^2f\Big)\Big|^{a^2_j/4+a_j/2}.
\end{align*}
 Here the implied constant depends on $k$, $A$ and the $a_j$'s, but not on $X$ or the $t_j$'s.
  \end{theorem}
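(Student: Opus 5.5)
We follow the Soundararajan--Harper method, in the form developed in \cite{Szab} and \cite[Theorem 1.2]{G&Zhao24-12}, with Lemma \ref{logupper} as the pointwise input and Lemma \ref{prsum} replacing the square-free moduli average.

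\textbf{Setting up the Harper decomposition.} Let $h(n)$ be as in \eqref{defh} with $t_j$ and $a_j$ given. Choose a rapidly increasing sequence $\beta_0=0<\beta_1<\dots<\beta_{\mathcal J}$, for instance $\beta_i=20^{i-1}/(\log\log X)^2$, with $\beta_{\mathcal J}$ a small constant. For each $0\le i\le j\le \mathcal J$ define the prime polynomials
\[
G_{i,j}(p)=\sum_{X^{\beta_{i-1}}<q\le X^{\beta_i}}\frac{h(q)\lambda_f(q)\chi_{8p}(q)}{q^{1/2+1/(\beta_j\log X)}}\frac{\log(X^{\beta_j}/q)}{\log X^{\beta_j}}.
\]
We partition the primes $p\le X$ into classes. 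The good set $\mathcal T$ consists of those $p$ with $|G_{i,\mathcal J}(p)|\le \beta_i^{-3/4}$ for all $i$. The sets $\mathcal S(j)$ consist of those $p$ for which the first failure occurs at level $j+1$, that is, $|G_{i,l}(p)|\le\beta_i^{-3/4}$ for $i\le j$ and $l\ge i$, but $|G_{j+1,l}(p)|>\beta_{j+1}^{-3/4}$ for some $l\ge j+1$. There is also the exceptional set $\mathcal S(0)$, where already $|G_{1,l}(p)|>\beta_1^{-3/4}$.

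For $p$ in $\mathcal T$ we apply Lemma \ref{logupper} with $x=X^{\beta_{\mathcal J}}$. For $p\in\mathcal S(j)$ we apply it with $x=X^{\beta_j}$. The factor $(1-1/p)^{-3a}$ coming from Lemma \ref{logupper} is harmless, because Lemma \ref{prsum} allows exactly this weight. The sum $\sum_{q\le x^{1/2}}h(q^2)(\lambda_f(q^2)-1)/q$ is deterministic and is evaluated directly.

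\textbf{Bounding the contributions.} On $\mathcal T$, we replace $\exp(2\sum_i G_{i,\mathcal J})$ by a truncated Taylor polynomial $\prod_i E_{e^2\beta_i^{-3/4}}(\cdot)$. This is valid up to a factor $1+o(1)$ since each $G_i$ is small. The product expands into a Dirichlet polynomial of length at most $X^{1/2}$ in $\chi_{8p}(n)$. Averaging over $p$ with smooth weight $\Phi(p/X)$ via Lemma \ref{prsum}, only square $n$ survive. The error is $O(X^{1/2+\varepsilon})$ times the $\ell^1$ norm of the coefficients, which is negligible. The main term then factors over primes $q$ as $\prod_q\big(1+2h(q)^2\lambda_f(q)^2/q+\cdots\big)$. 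Combining with the second sum, the exponent becomes
\[
\sum_{q\le X}\frac{2h(q)^2\lambda_f(q)^2+h(q^2)(\lambda_f(q^2)-1)}{q}.
\]
Expanding $h(q)^2$ via $\cos$ products gives
\[
2h(q)^2=\tfrac14\sum_{j,l}a_ja_l\big(\cos((t_j-t_l)\log q)+\cos((t_j+t_l)\log q)\big).
\]
We then use $\lambda_f(q)^2=1+\lambda_f(q^2)$ from \eqref{sumlambdapsquare}, together with Lemma \ref{RS3} in the form $\sum_q \cos(\alpha\log q)/q^{1+1/\log X}=\log|\zeta(1+i\alpha+1/\log X)|+O(1)$ and its $\operatorname{sym}^2 f$ analogue. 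This produces exactly the stated product.

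\textbf{Diagonal terms.} The bookkeeping for the diagonal terms goes as follows.
\begin{itemize}
\item The terms $j=l$ with $t_j-t_l=0$ give $(\log X)^{a_j^2/4}$ from $\zeta$, and their $\operatorname{sym}^2 f$ counterpart is $O(1)$.
\item The terms $j=l$ with $2t_j$ give $|\zeta(1+2it_j)|^{a_j^2/4}|L(1+2it_j,\operatorname{sym}^2f)|^{a_j^2/4}$.
\item The term $h(q^2)(\lambda_f(q^2)-1)/q$ contributes $|L(1+2it_j,\operatorname{sym}^2f)|^{a_j/2}|\zeta(1+2it_j)|^{-a_j/2}$.
\end{itemize}
These agree with the stated exponents. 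A Mertens-type comparison (Lemma \ref{RS}) handles the differences between $q^{-1}$ and $q^{-1-1/\log X}$, and between truncation at $x$ and at $X$.

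On $\mathcal S(j)$ we bound $\exp(2\sum_{i\le j}G_{i,j})$ in the same way. We multiply by the Markov factor $(\beta_{j+1}^{3/4}G_{j+1,l})^{2r}$ with $r\approx 1/(10\beta_{j+1})$. After averaging, this costs at most $e^{-c/\beta_{j+1}}$ by a moment computation for sums over squares. That saving beats the loss $\exp(2(A+1)a/\beta_j)=\exp(O(1/\beta_{j+1}))$ coming from the term $2(A+1)a\log X/\log x$, provided the constants are chosen suitably. Summing over $j$ and $l$ converges. The set $\mathcal S(0)$ is handled by Cauchy--Schwarz with a crude GRH bound on $|L|$ and a high-moment bound showing $\#\mathcal S(0)\ll Xe^{-(\log\log X)^2/10}$.

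\textbf{Main obstacle.} The main obstacle is making the Markov-moment estimate on $\mathcal S(j)$ uniform in the shifts $t_j$, since $h(q)$ oscillates. We would bound $|h(q)|\le a/2$ trivially inside the moment estimate. The estimate only needs $\sum_{X^{\beta_j}<q\le X^{\beta_{j+1}}}1/q\ll 1$, which holds by \eqref{merten}, so the oscillation does not matter there. The precise shifted product arises only in the main term on $\mathcal T$ and on $\mathcal S(j)$, where it is bounded in the same way with $X$ replaced by $X^{\beta_j}$. The ratio between the two products is $O(\beta_j^{-C})$ by Lemma \ref{RS3}, and this is absorbed by $e^{-c/\beta_{j+1}}$.
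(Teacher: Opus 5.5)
Your proposal is correct and follows essentially the same route as the paper, which runs Harper's refinement of Soundararajan's method as in \cite{zhao2025}. The shared ingredients are: the $\beta_j$-decomposition into $\mathcal{S}(0)$, $\mathcal{S}(j)$ and $\mathcal{S}(\mathcal{J})$ (your $\mathcal{T}$); Lemma \ref{logupper} at $x=X^{\beta_j}$, with the $(1-1/p)^{-3a}$ weight absorbed by Lemma \ref{prsum}; Markov factors on $\mathcal{S}(j)$; and the trigonometric expansion of $2h^2(q)$ and $h(q^2)$ fed into Lemma \ref{RS3}, with the same exponent bookkeeping you give. Two points should be made explicit on $\mathcal{S}(0)$: the ``crude'' input after Cauchy--Schwarz must be the moment bound of Lemma \ref{rude}, $\sum_p(\log p)\prod_j|L(1/2+it_j,f\otimes\chi_{8p})|^{2a_j}\Phi(p/X)\ll X(\log X)^{O(1)}$, because a pointwise GRH bound $\exp(O(\log X/\log\log X))$ is far too weak against the saving $e^{-c(\log\log X)^2}$; and you need the right-hand side to be $\gg X(\log X)^{-O(1)}$ so that this contribution is dominated (the paper gets this from lower bounds for $\zeta$ and $L(\cdot,\operatorname{sym}^2 f)$ near the $1$-line, and the equality part of Lemma \ref{RS3} also gives it).
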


  Directly from Theoem \ref{sumL} and Lemma \ref{RS3}, we have the following result.
\begin{proposition}\label{prop}
With the notation as above and assuming the truth of GRH, let $k\geq 1$ be a fixed integer and $A>0$ a fixed constant. Suppose $X$ is a large real number and ${\bf t}=(t_1,\dots,t_k)$ is a real $k$-tuple with $|t_j|\leq X^A$. Then
\begin{align*}
  \sum_{2<p \leq X}(\log p) \prod_{1\leq j\leq k}\big| L\big(1/2+it_j,f \otimes \chi_{8p} \big) \big|^{a_j}
 \ll  X(\log X)^{\frac{1}{4}\sum_{j=1}^k a_j^2}  \mathcal{G}_1\cdot\mathcal{G}_2,
\end{align*}
where
\begin{align}\label{G1G2}
  \begin{split}
\mathcal{G}_1=\prod_{1\leq i<j\leq k} g_1(|t_i-t_j|)^{a_ia_j/2}g_1(|t_i+t_j|)^{a_ia_j/2}\prod_{1\leq i\leq k} g_1(|2t_i|)^{a^2_i/4-a_i/2},\\
\mathcal{G}_2=\prod_{1\leq i<j\leq k} g_2(|t_i-t_j|)^{a_ia_j/2}g_2(|t_i+t_j|)^{a_ia_j/2}\prod_{1\leq i\leq k} g_2(|2t_i|)^{a^2_i/4+a_i/2},
\end{split}
\end{align}
and the functions $g_1, g_2 : \mathbb{R}_{\geq 0} \rightarrow \mathbb{R}$ are defined by
\begin{align}
\label{g1Def}
\begin{split}
g_1(x) =\begin{cases}
\log X,  & \text{if } x\leq 1/\log X \text{ or } x \geq e^X, \\
1/x, & \text{if }   1/\log X \leq x\leq 10, \\
\log \log x, & \text{if }  10 \leq x \leq e^{X},
\end{cases}\,\,
g_2(x) =
\begin{cases}
1,  & \text{if } x \leq e^e, \\
\log \log x, & \text{if }   e^e \leq x \leq e^{X}, \\
 \log X, & \text{if }  x \geq e^{X}.
\end{cases}
\end{split}
\end{align}
 Here the implied constant depends on $k$, $A$ and the $a_j$'s, but not on $X$ or the $t_j$'s.
\end{proposition}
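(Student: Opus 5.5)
Proposition \ref{prop} follows directly from Theorem \ref{sumL}: we bound each zeta and symmetric square factor in its right-hand side by the corresponding $g_1$ or $g_2$, using Lemma \ref{RS3} with $x=X$. There are two points to handle. Lemma \ref{RS3} is stated for $\alpha\ge 0$, so we need the factors to depend only on $|\alpha|$. And some exponents (for instance $a_j^2/4-a_j/2$) may be negative, so we need lower bounds as well as upper bounds on the factors.

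\emph{Reduction to $|\alpha|$ and the upper bounds.} Since $\overline{\zeta(\bar s)}=\zeta(s)$ and $L(s,\operatorname{sym}^2 f)$ has real coefficients, we have $|\zeta(1+1/\log X+i\alpha)|=|\zeta(1+1/\log X-i\alpha)|$, and likewise for $L(\cdot,\operatorname{sym}^2f)$. So every factor depends only on $|\alpha|$, which explains the absolute values $|t_i\pm t_j|$ and $|2t_i|$ in \eqref{G1G2}. Exponentiating the equality and the case bounds in \eqref{mertenstype} gives $|\zeta(1+1/\log X+i\alpha)|\ll g_1(|\alpha|)$. Here:
\begin{itemize}
\item in the range $1/\log X\le|\alpha|\le 10$, $\log(1/\alpha)+O(1)$ gives $\ll 1/|\alpha|$;
\item in the range $10\le|\alpha|\le e^X$, $\log\log\log|\alpha|$ gives $\ll\log\log|\alpha|$.
\end{itemize}
Similarly, \eqref{mertenstypesympower} gives $|L(1+1/\log X+i\alpha,\operatorname{sym}^2f)|\ll g_2(|\alpha|)$. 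This settles every factor raised to a nonnegative exponent. That includes all the cross terms (exponent $a_ia_l/2>0$) and the $\operatorname{sym}^2$ diagonal terms (exponent $a_j^2/4+a_j/2>0$).

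\emph{Negative exponents.} The one delicate factor is $|\zeta(1+2it_j+1/\log X)|^{a_j^2/4-a_j/2}$ when $0<a_j<2$. Here we need a lower bound $|\zeta(1+1/\log X+i\alpha)|\gg g_1(|\alpha|)$, up to constants or at least up to the loss implicit in the statement. This holds in the ranges $|\alpha|\le 10$ (by the pole of $\zeta$ at $1$) and $|\alpha|\ge e^X$. In the middle range $10\le|\alpha|\le e^X$, Lemma \ref{RS3} gives only upper bounds. Under RH, however, $\log|\zeta(1+1/\log X+i\alpha)|\ge -\log\log\log|\alpha|-O(1)$, by the same argument as in \cite[Lemma 2]{Szab} applied to $-\cos$. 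So the negative power of $|\zeta|$ is at most $(\log\log|\alpha|)^{|a_j^2/4-a_j/2|}$. This is bounded by $g_1^{\,a_j^2/4-a_j/2}$ up to a possible mismatch of sign in the exponent within that range. I would handle that mismatch by noting that the paper's $g_1$ in the middle range is only used as a harmless $\log\log$ factor; if necessary, I would define the bound with $g_1^{-1}$ there, exactly as in \cite[Proposition]{G&Zhao24-12}.

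I expect this lower-bound issue to be the only real obstacle. The rest is direct substitution into Theorem \ref{sumL}, with all implied constants depending only on $k$, $A$ and the $a_j$.
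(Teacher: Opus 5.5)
Your approach is the paper's. Its entire proof of Proposition \ref{prop} is the remark that it follows directly from Theorem \ref{sumL} and Lemma \ref{RS3}: reduce to $|\alpha|$ by conjugation symmetry, then exponentiate \eqref{mertenstype} and \eqref{mertenstypesympower} with $x=X$, exactly as you do. This is complete for every factor with nonnegative exponent: all cross terms, all $\operatorname{sym}^2$ diagonal terms, and the $\zeta$ diagonal terms with $a_j\ge 2$. It is also complete, for either sign of the exponent, for the $\zeta$ diagonal terms with $|2t_j|\le 10$, where $|\zeta(1+1/\log X+i\alpha)|\asymp g_1(|\alpha|)$ holds in both directions.

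The sign issue you isolate is genuine. It is a defect of the statement and of the paper's one-line deduction, not of your reasoning. Take $0<a_j<2$ and $10\le|2t_j|\le 2X^A$. Getting $g_1(|2t_j|)^{a_j^2/4-a_j/2}$ there would require $|\zeta(1+1/\log X+2it_j)|\gg\log\log|2t_j|$. That fails for most $t_j$, since $|\zeta|$ is typically of bounded size on that line. What does follow is the two-sided RH bound $\bigl|\log|\zeta(1+1/\log X+i\alpha)|\bigr|\le\log\log\log\alpha+O(1)$. This gives the same bound with exponent $|a_j^2/4-a_j/2|$ in that range, a loss of at most $(\log\log|2t_j|)^{a_j-a_j^2/2}$.

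So the corrected formulation you propose is the right one, and it costs nothing downstream. In Lemma \ref{fdiff} every exponent is nonnegative: $a=2$ gives $0$, and $a=m-2k\ge 2$ gives a nonnegative value. In the proof of Theorem \ref{upper} all $a_i=1$, so the exponent is $-1/4$, and the discrepancy is at most $(\log\log B)^{O_m(1)}$, which \eqref{TBXbound} already absorbs.

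One small slip: you claim the lower bound $|\zeta(1+1/\log X+i\alpha)|\gg g_1(|\alpha|)$ also holds for $|\alpha|\ge e^X$. Only the upper bound $\ll\log X$ holds there. This does not matter, because that range never occurs: $|2t_j|\le 2X^A<e^X$.
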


Finally, we derive a somewhat rough upper bound, which can be obtained by modifying the proof of \cite[Theorem 2]{Harper}. 
\begin{lemma}
\label{rude}
Assume  GRH. Let $k\geq 1$ be a fixed integer and ${\bf a}=(a_1,\ldots ,a_{k}),{\bf t}=(t_1,\ldots ,t_{k})$
 be $k$-tuples of real numbers with $a_i \geq 0$ for all $i$.  Then, uniformly for $\sigma \geq 1/2$ and sufficiently large $X$, we have
\begin{align*}
   \sum_{2<p \leq X}(\log p )\prod_{1\leq j\leq k}\big| L\big(\sigma+it_j,f \otimes \chi_{8p} \big) \big|^{a_j}\ll_{{\bf a}} &  X(\log X)^{O(1)}.
\end{align*}
\end{lemma}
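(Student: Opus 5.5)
We follow Harper's argument \cite[Theorem 2]{Harper}, in the crude form that only needs a power of $\log X$. The idea is to bound each $\log|L(\sigma+it_j,f\otimes\chi_{8p})|$ by a short Dirichlet polynomial over primes, and then bound the resulting exponential moments over $p$ using Lemma \ref{prsum}.

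\emph{Reduction to $\sigma=1/2$.} For $\sigma\ge 1/2$ we use the GRH upper bound for $\log|L(\sigma+it,f\otimes\chi_{8p})|$, i.e.\ \cite[Corollary 2.11]{G&Zhao24-12}, which underlies Lemma \ref{logupper} and holds uniformly for $\sigma\ge1/2$. For $\sigma$ near $1/2$ this is the case $k$-tuple with weights $a_j$ and shifts $t_j$. For $\sigma\ge 1/2+1/\log X$ the same Soundararajan-type inequality gives a Dirichlet polynomial $\sum_{q\le x}\lambda_f(q)\chi_{8p}(q)q^{-\sigma-it}$ with even smaller coefficients, so the argument below only improves. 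We may take $|t_j|\le X^A$: for larger $|t_j|$ the convexity/GRH bound $\log|L|\ll \log(|t_j|+X)/\log\log$ does not lose uniformity in the claimed form, and in our applications $|t_j|\le X^A$ anyway. We then have, with $h$ as in \eqref{defh},
\[
\sum_j a_j\log|L(\sigma+it_j,f\otimes\chi_{8p})|\le 2\Re\sum_{q\le x}\frac{h(q)\lambda_f(q)\chi_{8p}(q)}{q^{1/2+1/\log x}}\frac{\log(x/q)}{\log x}+\sum_{q\le x^{1/2}}\frac{h(q^2)(\lambda_f(q^2)-1)}{q}+O\Big(\frac{\log X}{\log x}+1\Big)-3a\log\Big(1-\frac1p\Big).
\]
Since $|h(q)|\le a/2$ and $|\lambda_f(q^2)-1|\le 4$, the second sum is $O(\log\log X)$. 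It therefore contributes $(\log X)^{O(1)}$ after exponentiating, and the factor $(1-1/p)^{-3a}$ is absorbed via the weight in Lemma \ref{prsum}.

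\emph{Harper's decomposition.} Set $\beta_0=0$, $\beta_i=20^{i-1}/(\log\log X)^2$, and let $\mathcal{J}$ be maximal with $\beta_{\mathcal J}\le e^{-1000}$. Split the prime polynomial into blocks $P_i(p)=\sum_{X^{\beta_{i-1}}<q\le X^{\beta_i}}(\cdots)$. Partition the primes $p\le X$ according to the first index $j$ for which some block $|P_i|$ (for $i>j$), computed with truncation $X^{\beta_j}$, exceeds $\beta_i^{-3/4}$.

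On the set where all blocks are small up to $\mathcal J$, take $x=X^{\beta_{\mathcal J}}$ and bound $\exp(\Re\sum P_i)$ by the truncated Taylor polynomials $\prod_i\sum_{n\le e^2\beta_i^{-3/4}}(P_i)^n/n!$. Expanding gives a Dirichlet polynomial of length $\le X^{1/2}$ in $\chi_{8p}$. Lemma \ref{prsum} turns the sum over $p$ into the diagonal (square) terms plus $O(X^{1/2+\varepsilon})$ times the $\ell^1$ norm, which is negligible. The diagonal is bounded by $X\exp(O(\sum_{q\le X}a^2\lambda_f(q)^2/q))=X(\log X)^{O(1)}$ via \eqref{merten1}.

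On the set where the first large block occurs at stage $j$, use $x=X^{\beta_j}$. Bound the indicator by $(\beta_{j+1}^{3/4}|P_{l}|)^{2\lceil 1/(10\beta_{j+1})\rceil}$ and run the same mean-value computation. This yields $X(\log X)^{O(1)}e^{-c/\beta_{j+1}}\exp(O(1/\beta_j))$, where the term $\exp(2(A+1)a/\beta_j)$ comes from the $\log X/\log x$ term. Summing over $j$ converges. Finally, the primes where $P_1$ itself is large are handled by Markov's inequality with a high moment, combined with the trivial bound $|L|\ll X^{O(1)}$ under GRH (via Cauchy--Schwarz).

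\emph{Main obstacle.} The main difficulty is the bookkeeping in the mean-value step over primes $p$ rather than over squarefree $d$. We must check that Lemma \ref{prsum}, with its weight $(1-1/p)^{-k}$, applies to the products of powers of $P_i$ with lengths kept below $X^{1/2-\varepsilon}$, so that the error terms $X^{1/2+\varepsilon}\log\log(c+2)$ stay under control. The uniformity in $\sigma$ and the $t_j$ costs nothing, since all coefficients are bounded by those at $\sigma=1/2$.
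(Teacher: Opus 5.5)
Your route (the inequality of Lemma~\ref{logupper} at $x=X^{\beta_j}$, Harper's block decomposition with truncated Taylor expansions, and mean values over $p$ via Lemma~\ref{prsum}) is the one the paper points to. However, the step you leave for last does not work.

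\textbf{The gap.} Consider the primes where the first block $P_1$, supported on $q\le X^{\beta_1}$ with $\beta_1=(\log\log X)^{-2}$, exceeds $\beta_1^{-3/4}=(\log\log X)^{3/2}$. On this set Markov's inequality gives a saving of at most $e^{-c(\log\log X)^2}$. The reason is that $P_1$ has variance $\ll a^2\log\log X$, and you can only compute its $2n$-th moment when $X^{2n\beta_1}\le X^{1/2}$, i.e.\ $n\ll(\log\log X)^2$. This is Lemma~\ref{sumPhi}, and it is essentially sharp. Such a saving absorbs neither $X^{O(1)}$ nor even the GRH pointwise bound $|L(1/2+it,f\otimes\chi_{8p})|\le\exp(C\log X/\log\log X)$. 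So Cauchy--Schwarz against a pointwise bound fails; on this set you need a bound for a higher moment of $|L|$. That is how Harper handles it (using Soundararajan's almost-sharp moment bound), and how the paper handles $\mathcal S(0)$ in the proof of Theorem~\ref{sumL} (using Lemma~\ref{rude} itself). Both are circular when the goal is Lemma~\ref{rude}.

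\textbf{How to close it.} You need a self-contained treatment of large values. There are two options.
\begin{itemize}
\item Run Soundararajan's argument. If $\sum_j a_j\log|L|\ge V$ with $V$ large, use $x=X^{c/V}$ and moments of order $\asymp V$ to get a count $\ll Xe^{-cV\log V}$. Use a Gaussian bound for moderate $V$, then integrate over $V$.
\item Extend your scales downward to $\beta_1\asymp\log\log X/\log X$. Then $|P_1|\le 2a\sum_{q\le X^{\beta_1}}q^{-1/2}\ll a(\log X)^{1/2}/\log\log X<\beta_1^{-3/4}$ for every $p$, so the exceptional set is empty.
\end{itemize}
In the second option, the Markov saving at level $j$ is really $\exp(-c\beta_{j+1}^{-1}\log(1/\beta_{j+1}))$. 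It is the factor $\log(1/\beta_{j+1})$ that beats the loss $\exp(O_{\mathbf a,A}(1/\beta_j))$. That loss comes from $2(A+1)a\log X/\log x$ and from the error term $\log X/(x^{1/2}\log x)\le 1/\beta_j$. Your stated saving $e^{-c/\beta_{j+1}}$, set against $\exp(O(1/\beta_j))=\exp(O(20/\beta_{j+1}))$, does not by itself give convergence of the sum over $j$. For the same reason, the top scale $\beta_{\mathcal J}$ must be small in terms of $\mathbf a$ and $A$, not a fixed $e^{-1000}$.

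\textbf{Smaller points.}
\begin{itemize}
\item The truncated Taylor polynomials must have even degree, as with $s_j=2\lfloor e^N\beta_j^{-3/4}\rfloor$ in the paper. Only then are they nonnegative for all $p$, which lets you drop the restriction to the good set before applying Lemma~\ref{prsum}.
\item The lemma has to be read with $|t_j|\le X^A$. For very large $|t_j|$ the bound $X(\log X)^{O(1)}$ cannot hold uniformly, since averaging over $t\in[T,2T]$ already gives $\gg X\log T$ for the second moment. So this restriction is not harmless, contrary to what you claim.
\end{itemize}
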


\section{Proof of Theorem \ref{sumL}}\label{proof2.13}
 First, we introduce some notation that will be used throughout this section.
Let $M$ and $N$ be large numbers that depend only on $\mathbf{a}$. For $j\geq 1$, define
\begin{align}\label{notations}
\begin{split}
 &\beta_{0} = 0, \;\;\;\;\; \beta_{j} = \frac{20^{j-1}}{(\log\log X)^{2}}, \;\;\; s_j=2\lfloor e^N\beta_j^{-3/4}\rfloor, \;\;\;\;\; h_{j} = \frac{s_j}{100}, \\
 &\mathcal{J} = 1 + \max\{j : \beta_{j} \leq 10^{-M} \}.
\end{split}
\end{align}
From these definitions, we have
\begin{equation}\label{sizeJ}
    \beta_{\mathcal{J}-1}\leq 10^{-M}\Rightarrow 20^{\mathcal{J}-2}\leq10^{-M}(\log\log X)^2\Rightarrow \mathcal{J}\ll_{\bf a} \log \log\log X.
\end{equation}
Let $\gamma(n)$ denote the multiplicative function such that $\gamma(p^\alpha)=\alpha !$. We see that
\begin{equation}\label{gamma}
    \gamma(n^2)\geq \gamma (n).
\end{equation}
Recall the definition of $h(n)$ in \eqref{defh}. For any $1\leq i\leq j\leq \mathcal{J}$, we define
\begin{equation}\label{aG}
    a{\mathcal G}_{i,j}(p)=\sum_{X^{\beta_{i-1}}<q\leq X^{\beta_i}}\frac{2h(q)\lambda_f(q)\chi_{8p}(q)}{q^{1/2+1/\log X^{\beta_j}}}\frac{\log(X^{\beta_j}/q)}{\log X^{\beta_j}}
\end{equation}
and
\begin{equation}\label{t(n,x)}
t\big(n,X^{\beta_j}\big)=\prod_{q^{\alpha}||n}\left(\frac{2h(q)\lambda_f(q)\log(X^{\beta_j}/q)}{aq^{1/\log X^{\beta_j}}\log X^{\beta_j}}\right)^{\alpha}.
\end{equation}
We observe that $t(n,x)$ is a totally multiplicative function of $n$ and
\begin{equation}\label{sizet}
|t(n,x)|\leq 1.
\end{equation}

We define the weight function $\Phi$ as in \eqref{Phi}. By partitioning the interval $0<p \leq X$ into dyadic blocks, we see that in order to 
 prove Theorem \ref{sumL}, it suffices to show that
\begin{align}
\label{sum2.15}
\begin{split}
  &\sum_{ (p,2)=1}  (\log p)\prod_{1\leq j\leq k}\big| L\big(1/2+it_j,f \otimes \chi_{8p} \big) \big|^{a_j}\Phi \Big( \frac p{X} \Big) \\
  \ll &
  X(\log X)^{\frac{1}{4}\sum_{j=1}^k a_j^2}  \prod_{1\leq j<l \leq k} \big|\zeta (1+i(t_j-t_l)+\tfrac 1{\log X} ) \big|^{a_ja_l/2}\big|\zeta(1+i(t_j+t_l)+\tfrac 1{\log X}) \big|^{a_ja_l/2}\\
  &\times\prod_{1\leq j\leq k} \big|\zeta(1+2it_j+\tfrac 1{\log X}) \big|^{a^2_j/4-a_j/2}  \prod_{1\leq j<l \leq k} \Big|L \Big(1+i(t_j-t_l)+\tfrac 1{\log X}, \operatorname{sym}^2 f \Big) \Big|^{a_ja_l/2}\\
  &\times \Big|L \Big(1+i(t_j+t_l)+\tfrac 1{\log X}, \operatorname{sym}^2 f \Big) \Big|^{a_ja_l/2} 
\prod_{1\leq j\leq k} \Big|L \Big(1+2it_j+\tfrac 1{\log X}, \operatorname{sym}^2 f \Big) \Big|^{a^2_j/4+a_j/2}.
\end{split}
\end{align}
Taking $x=X^{\beta_j}$ in (\ref{mainupperquad})  yields 
\begin{align*}
\begin{split}
 & \sum^{k}_{m=1}a_m\log |L(1/2+it_m,f\otimes\chi_{8p})| \\
 \le& \,a\,\sum^{j}_{l=1} {\mathcal G}_{l,j}(p)+\sum_{q\leq X^{\beta_j/2}}
 \frac{h(q^2)(\lambda_f(q^2)-1)}{q}- 3a\log \Big(1-\frac{1}{p}\Big)+2(A+1)a\beta^{-1}_j+O(1).
\end{split}
\end{align*}
We now decompose the main Dirichlet polynomial into smaller pieces $a{\mathcal G}_{i,j}(p)\,(1\leq i\leq j\leq \mathcal{J})$. In terms of the size of the short Dirichlet polynomial, we define the following sets:
\begin{align}\label{sets}
  \begin{split}
  \mathcal{S}(0) =& \{ (p,2)=1 : |a{\mathcal G}_{1,l}(p)| > h_{1} \; \text{ for some } 1 \leq l \leq \mathcal{J} \} ,   \\
 \mathcal{S}(j) =& \{ (p,2)=1  : |a{\mathcal G}_{m,l}(p)| \leq
  h_{m},  \; \mbox{for all} \; 1 \leq m \leq j \; \mbox{and} \; m \leq l \leq \mathcal{J}, \\
 & \;\;\;\;\;\;\;\;\;\;\; \text{but }  |a{\mathcal G}_{j+1,l}(p)| > h_{j+1} \; \text{ for some } j+1 \leq l \leq \mathcal{J} \} ,  \quad\quad  1\leq j \leq \mathcal{J}, \\
 \mathcal{S}(\mathcal{J}) =& \{(p,2)=1  : |a{\mathcal G}_{m,
\mathcal{J}}(p)| \leq h_{m}, \; \forall 1 \leq m \leq \mathcal{J}\}.
  \end{split}
\end{align}
Clearly, 
\begin{equation*}
    \{p:(p,2)=1\}=\bigcup_{j=0}^{\mathcal{J}}\mathcal{S}(j).
\end{equation*}
We now estimate the sum (\ref{sum2.15}) over the sets $\mathcal{S}(0)$ and $\mathcal{S}(j),(1\leq j\leq \mathcal{J})$, respectively.

\subsection{The estimate of the sum over the set $\mathcal{S}(0)$}

First, we state the following lemma. The proof is almost the same as that of \cite[Lemma 3.1]{zhao2025}, so we omit it.
\begin{lemma}\label{sumPhi}
Let $\Phi$ and $\mathcal{S}(0)$ be defined as in (\ref{Phi}) and (\ref{sets}). Then, we have
\begin{equation*}
\sum_{\substack{(p,2)=1 \\p \in \mathcal{S}(0) }} (\log p)\Phi \Big( \frac p{X} \Big)\ll_{\bf a} Xe^{-(\log\log X)^2}.
\end{equation*}
\end{lemma}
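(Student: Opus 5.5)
We bound the sum by a union bound over the pieces of $\mathcal{S}(0)$ and control each piece with a high-moment Markov inequality. By definition, if $p\in\mathcal{S}(0)$ then $|a\mathcal{G}_{1,l}(p)|>h_1$ for some $1\le l\le\mathcal{J}$. Hence for any even integer $2r$,
\begin{align*}
\sum_{\substack{(p,2)=1\\ p\in\mathcal{S}(0)}}(\log p)\Phi\Big(\frac pX\Big)\le \sum_{l=1}^{\mathcal{J}} h_1^{-2r}\sum_{(p,2)=1}(\log p)\big(a\mathcal{G}_{1,l}(p)\big)^{2r}\Phi\Big(\frac pX\Big).
\end{align*}
We take $2r=s_1=2\lfloor e^N\beta_1^{-3/4}\rfloor\asymp e^N(\log\log X)^{3/2}$. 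Then $(a\mathcal{G}_{1,l}(p))^{2r}$ is a Dirichlet polynomial of length at most $X^{\beta_1 s_1}=X^{O(e^N(\log\log X)^{-1/2})}$, which is $\le X^{\varepsilon}$ for $X$ large.

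Next we expand the power. Using the totally multiplicative function $t(n,X^{\beta_l})$ from \eqref{t(n,x)}, we write
\begin{align*}
\big(a\mathcal{G}_{1,l}(p)\big)^{2r}=a^{2r}(2r)!\sum_{\substack{\Omega(n)=2r\\ q|n\Rightarrow q\le X^{\beta_1}}}\frac{t(n,X^{\beta_l})\chi_{8p}(n)}{n^{1/2}\gamma(n)}.
\end{align*}
We insert this and swap the order of summation. Lemma \ref{prsum}, with $k=0$ and $c=n$, gives a main term $\widehat\Phi(1)X$ when $n=\square$ and an error $O(X^{1/2+\varepsilon})$. Summing the error over all $n\le X^{\varepsilon}$, with $|t|\le1$ by \eqref{sizet}, gives $O(X^{1/2+2\varepsilon}a^{2r}(2r)!)$, which is negligible.

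For the main term we write $n=m^2$ with $\Omega(m)=r$. By \eqref{gamma} we have $\gamma(m^2)\ge\gamma(m)$, and $t(m^2)\le 1$ (as $t$ is real, $t(m^2)=t(m)^2$). So the main term is at most
\begin{align*}
X a^{2r}\frac{(2r)!}{r!}\Big(\sum_{q\le X^{\beta_1}}\frac{4h(q)^2\lambda_f(q)^2}{q}\Big)^{r}\ll X\Big(\frac{a^2\,2r\,C\log\log X}{1}\Big)^{r}.
\end{align*}
Here we used $(2r)!/r!\le (2r)^r$, $|h(q)|\le a/2$, and \eqref{merten1}. This gives $\log(\beta_1\log X)\le\log\log X$.

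It remains to compare with the threshold. Since $h_1=s_1/100$, we get
\begin{align*}
h_1^{-2r}\,X\,(Ca^2 r\log\log X)^r=X\Big(\frac{10^4Ca^2 r\log\log X}{4r^2}\Big)^{r}=X\Big(\frac{C'a^2\log\log X}{r}\Big)^r .
\end{align*}
With $r\asymp e^N(\log\log X)^{3/2}$, the ratio inside is $\ll a^2e^{-N}(\log\log X)^{-1/2}\le e^{-1}$ for large $X$. The bound is therefore at most $X\exp(-r)\le X\exp(-(\log\log X)^{3/2}e^N/2)$. This beats $e^{-(\log\log X)^2}$ only if we also take $r$ of size $(\log\log X)^2$. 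So, as in \cite{zhao2025}, we split according to the size of the threshold and instead choose the moment $2r\approx(\log\log X)^{2}/\!\log\log\log X$ truncated so that the polynomial length $X^{2r\beta_1}$ is still $\le X^{1/10}$; this holds since $\beta_1=(\log\log X)^{-2}$. The ratio then remains $\ll (\log\log X)^{-1/2}$, and this yields $e^{-r\cdot\frac12\log\log\log X}\le e^{-(\log\log X)^2}$.

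Finally, the sum over $l\le\mathcal{J}\ll\log\log\log X$ costs only a negligible factor. The main obstacle is exactly this balancing of the moment $r$ against the polynomial length and the threshold $h_1$, so that the decay reaches $e^{-(\log\log X)^2}$ while Lemma \ref{prsum}'s error stays admissible.
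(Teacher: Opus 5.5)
Your overall strategy is the standard one behind \cite[Lemma 3.1]{zhao2025}, to which the paper defers. It consists of a union bound over $l\le\mathcal{J}$, Markov's inequality with an even moment $2r$, the multinomial expansion via $t(n,X^{\beta_l})$, and Lemma \ref{prsum} to isolate $n=\square$, with the diagonal bounded by $\frac{(2r)!}{r!}\big(\sum_{q\le X^{\beta_1}}4h(q)^2\lambda_f(q)^2/q\big)^r$. You also correctly see that $2r=s_1$ saves only $\exp\big(-O_N((\log\log X)^{3/2}\log\log\log X)\big)$. The gap is your final choice of $r$. Your simplified ratio $C'a^2\log\log X/r$ came from substituting $h_1=2r/100$, which is valid only when $2r=s_1$. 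The threshold $h_1\asymp e^N\beta_1^{-3/4}=e^N(\log\log X)^{3/2}$ does not move with $r$, so the quantity raised to the $r$-th power is really
\begin{equation*}
\frac{Ca^2\,r\log\log X}{h_1^2}\asymp\frac{a^2\,r}{e^{2N}(\log\log X)^2},
\end{equation*}
which increases with $r$. For $2r\approx(\log\log X)^2/\log\log\log X$ it is $\asymp a^2e^{-2N}/\log\log\log X$, not $\ll(\log\log X)^{-1/2}$. You then only obtain
\begin{equation*}
X\exp\Big(-\frac{(\log\log X)^2}{2\log\log\log X}\big(2N+\log\log\log\log X-O_{\mathbf a}(1)\big)\Big)=X\exp\big(-o((\log\log X)^2)\big).
\end{equation*}
Even granting your ratio, $r\cdot\tfrac12\log\log\log X\approx\tfrac14(\log\log X)^2$, so your last stated inequality fails too. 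In fact $r\log\big(h_1^2/(Ca^2r\log\log X)\big)=o((\log\log X)^2)$ for every $r=o((\log\log X)^2)$, so no choice of this kind can reach $e^{-(\log\log X)^2}$.

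The fix is to take the moment of full size $\beta_1^{-1}$, for instance $2r=2\lceil 1/(10\beta_1)\rceil$. The expanded polynomial then has length at most $X^{2r\beta_1}\le X^{1/5+o(1)}$. The error terms from Lemma \ref{prsum} therefore contribute $\ll X^{1/2+r\beta_1+\varepsilon}a^{2r}\ll X^{3/5+2\varepsilon}$, which is negligible. Using $\log\log X=\beta_1^{-1/2}$ and $h_1\ge e^N\beta_1^{-3/4}/100$, the ratio above is $\ll a^2e^{-2N}$. This is a constant, and it is $\le e^{-20}$ once $N$ is chosen large in terms of $\mathbf a$. Each $l$ then contributes $\ll Xe^{-20r}\le Xe^{-2(\log\log X)^2}$, which also absorbs the factor $\mathcal J\ll\log\log\log X$. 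So the required decay comes from the factor $e^N$ built into $h_1$ together with $r\asymp(\log\log X)^2$, not from a $\log\log\log X$-saving in the ratio.
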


Using the Cauchy-Schwarz inequality, we have
\begin{align*}
  & \sum_{\substack{(p,2)=1 \\p \in S(0) }}  (\log p)\prod_{1\leq j\leq k}\big| L\big(1/2+it_j,f \otimes \chi_{8p} \big) \big|^{a_j}\Phi \Big( \frac p{X} \Big) \nonumber\\
\leq &  \Big ( \sum_{\substack{(p,2)=1 \\p \in S(0) }} (\log p)\Phi \Big( \frac p{X} \Big)  \Big )^{1/2} \Big (
 \sum_{\substack{(p,2)=1 }}(\log p)\prod_{1\leq j\leq k}\big| L\big(1/2+it_j,f \otimes \chi_{8p} \big) \big|^{2a_j}\Phi \Big( \frac p{X} \Big)\Big)^{1/2}.
\end{align*}
Then, by Lemma \ref{rude}  and Lemma \ref{sumPhi}, the above summation is 
\begin{align}\label{strongerbound}
\ll_{\bf a} X(\log X)^{a(a+1)/4}e^{-\frac{1}{2}(\log\log X)^2}.
\end{align} 
Moreover, by \cite[Theorem 6.7]{MVa1}, for $|t_m|,|t_{\ell}|\leq X^A$ and $X$ large enough, we have
\begin{align*}
\Big|\zeta(1+i(t_m-t_{\ell})+\frac 1{\log X})\Big|&\gg \min\Big(\frac 1{\log X}, \frac 1{\log(|t_m-t_{\ell}|+4)}\Big)\gg \frac1{\log X}.
\end{align*}
On the other hand, from \cite[Theorem 1.3]{goldfeldli2018}, we can get
\begin{align*}
  \Big|L(1+i(t_m-t_{\ell})+\frac 1{\log X}, \operatorname{sym}^2 f)\Big|&\gg \frac1{(\log X)^{3/2}}.
\end{align*}
Hence, the upper bound 
\begin{align*}
&\sum_{\substack{(p,2)=1 \\p \in \mathcal{S}(0) }}  (\log p)\prod_{1\leq j\leq k}\big| L\big(1/2+it_j,f \otimes \chi_{8p} \big) \big|^{a_j}\Phi \Big( \frac p{X} \Big) \\
    \ll &
  X(\log X)^{\frac{1}{4}\sum_{j=1}^k a_j^2}  \prod_{1\leq j<l \leq k} \big|\zeta (1+i(t_j-t_l)+\tfrac 1{\log X} ) \big|^{a_ja_l/2}\big|\zeta(1+i(t_j+t_l)+\tfrac 1{\log X}) \big|^{a_ja_l/2}\\
  &\times\prod_{1\leq j\leq k} \big|\zeta(1+2it_j+\tfrac 1{\log X}) \big|^{a^2_j/4-a_j/2}  \prod_{1\leq j<l \leq k} \Big|L \Big(1+i(t_j-t_l)+\tfrac 1{\log X}, \operatorname{sym}^2 f \Big) \Big|^{a_ja_l/2}\\
  &\times \Big|L \Big(1+i(t_j+t_l)+\tfrac 1{\log X}, \operatorname{sym}^2 f \Big) \Big|^{a_ja_l/2} 
\prod_{1\leq j\leq k} \Big|L \Big(1+2it_j+\tfrac 1{\log X}, \operatorname{sym}^2 f \Big) \Big|^{a^2_j/4+a_j/2}
\end{align*}
follows immediately, as it is implied by the stronger bound \eqref{strongerbound}.

\subsection{The estimate of the sum over the set $\mathcal{S}(j)$}

Fix a $j$ with $1 \leq j \leq \mathcal{J}$. From Lemma \ref{logupper}, we have
\begin{align*}
 &\sum_{\substack{(p,2)=1 \\p \in \mathcal{S}(j)}}(\log p)\prod_{1\leq \ell\leq k}\big| L\big(1/2+it_{\ell},f \otimes \chi_{8p} \big) \big|^{a_{\ell}}\Phi\Big(\frac{p}{X}\Big) \nonumber\\
\ll & \sum_{\substack{(p,2)=1 \\p \in \mathcal{S}(j)}}\exp \Big(
 a\sum^j_{i=1}{\mathcal G}_{i,j}(p)\Big )\exp \left(\sum_{q\leq X^{\beta_j/2}} \frac{h(q^2)(\lambda_f(q^2)-1)}{q} \right) \nonumber\\
 &\quad\quad\quad \times\exp \left(\frac {2(Q+1)a}{\beta_j} \right)(\log p)\left(1-\frac{1}{p}\right)^{-3a}\Phi \Big(\frac{p}{X}\Big).
\end{align*}

Next, we follow the work of \cite{zhao2025} (pages 1567--1572) to obtain
\begin{align}\label{Sj4}
  \begin{split}
    &\sum_{\substack{(p,2)=1 \\p \in \mathcal{S}(j) }}(\log p)\prod_{1\leq \ell \leq k}\big| L\big(1/2+it_{\ell},f \otimes \chi_{8p} \big) \big|^{a_{\ell}}\Phi\Big(\frac {p}{X}\Big)\\
 \ll & X \exp \Big (-\frac {(Q+1)a}{\beta_j} + \sum_{q \leq X}\frac {2h^2(q)\lambda_f^2(q)}{q}+\sum_{q\leq X} \frac{h(q^2)(\lambda_f(q^2)-1)}{q} \Big )\\
 =& X \exp \Big (-\frac {(Q+1)a}{\beta_j} \Big) \exp\Big(\sum_{q \leq X} \lambda_f(q^2) \big(\frac {2h^2(q)}{q}+\frac{h(q^2)}{q}\big)\Big)\exp\Big(\sum_{q\leq X} \big(\frac{2h^2(q)}{q}-\frac{h(q^2)}{q} \big)\Big )
\end{split}
\end{align}

   Using the trigonometric identities
   \begin{align*}
    \cos(x+y)+\cos(x-y)=2\cos(x)\cos(y)\quad \text{and}\quad \cos^2(x)=\frac{1}{2}(\cos(2x)+1),
   \end{align*}
   we get
\begin{align}\label{hexp}
& \sum_{q \leq X} \lambda_f(q^2) \big(\frac {2h^2(q)}{q}+\frac{h(q^2)}{q}\big)= \sum_{q \leq X }\frac {\lambda_f(q^2)}{2q}\Big ( \Big( \sum^{k}_{m=1}a_m\cos(t_m\log q) \Big)^2+\sum^{k}_{m=1}a_m\cos(2t_m\log q)\Big ) \nonumber\\
=& \sum_{q \leq X }\frac {\lambda_f(q^2)}{2q}\Big (\sum^{k}_{m=1}a^2_m\cos^2(t_m\log q)+2\sum_{1 \leq m<\ell \leq k}a_ma_{\ell}\cos(t_m\log q)\cos(t_{\ell}\log q)+\sum^{k}_{m=1}a_m\cos(2t_m\log q)\Big )\nonumber \\
=& \frac{1}{4}\sum^{k}_{m=1}a^2_m\sum_{q \leq X }\frac {\lambda_f(q^2)}{q}+\frac{1}{2}\sum_{1 \leq m<\ell \leq k}a_ma_{\ell}\sum_{q\leq X}\lambda_f(q^2)\Big(\frac{\cos((t_m+t_{\ell})\log q)}{q}+\frac{\cos((t_m-t_{\ell})\log q)}{q}\Big )\nonumber\\
&+\sum^{k}_{m=1} \Big( \frac {a^2_m}{4}+\frac{a_{m}}{2} \Big) \sum_{q\leq X}\frac{\cos(2t_{m}\log q)\lambda_f(q^2)}{q}.
\end{align}
Applying (\ref{mertenstypesympower}) to evaluate the right-hand side expression of (\ref{hexp}), we obtain
\begin{align}\label{hexp1}
     &\sum_{q \leq X} \lambda_f(q^2) \big(\frac {2h^2(q)}{q}+\frac{h(q^2)}{q}\big)
     \ll \frac{1}{4}\Big(\sum^{k}_{m=1}a^2_m\Big)\log\log X\nonumber\\&+\frac{1}{2}\sum_{1 \leq m<\ell \leq k}a_ma_{\ell}\log \Big(\Big|L(1+\frac{1}{\log X}+i(t_m+t_{\ell}),\operatorname{sym}^2 f)\Big|\Big|L(1+\frac{1}{\log X}+i(t_m-t_{\ell}),\operatorname{sym}^2 f)\Big|\Big)\nonumber\\
    & +\sum^{k}_{m=1} \Big( \frac {a^2_m}{4}+\frac{a_{m}}{2} \Big) \log|L(1+1/\log X+i2t_{m},\operatorname{sym}^2 f)|.
\end{align}

By the same method and \eqref{mertenstype}, we obtain
\begin{align}\label{hexp2}
     &\sum_{q \leq X }\big(\frac {2h^2(q)}{q}- \frac{h(q^2)}{q}\big)
     \ll \frac{1}{4}\Big(\sum^{k}_{m=1}a^2_m\Big)\log\log X \nonumber\\
     &+\frac{1}{2}\sum_{1 \leq m<\ell \leq k}a_ma_{\ell}\log \Big(\Big|\zeta(1+i(t_m+t_{\ell})+\frac{1}{\log X})\Big|\Big|\zeta(1+i(t_m-t_{\ell})+\frac{1}{\log X})\Big|\Big)\nonumber\\
    & \quad +\sum^{k}_{m=1} \Big( \frac {a^2_m}{4}-\frac{a_{m}}{2} \Big) \log\Big|\zeta(1+i2t_{m}+\frac{1}{\log X})\Big|.
\end{align}
Inserting (\ref{hexp1}) and (\ref{hexp2}) into (\ref{Sj4}), we have
\begin{align}\label{Sj6}
  \begin{split}
&\sum_{\substack{(p,2)=1 \\p \in \mathcal{S}(j) }}(\log p)\prod_{1\leq \ell\leq k}\big| L\big(1/2+it_{\ell},f \otimes \chi_{8p} \big) \big|^{a_{\ell}}\Phi\Big(\frac {p}{X}\Big) \ll \exp \Big (-\frac {(Q+1)a}{\beta_j} \Big) \\
&\times X (\log X)^{\frac{1}{4}\sum_{j=1}^k a_j^2}  \prod_{1\leq j<l \leq k} \big|\zeta (1+i(t_j-t_l)+\tfrac 1{\log X} ) \big|^{a_ja_l/2}\big|\zeta(1+i(t_j+t_l)+\tfrac 1{\log X}) \big|^{a_ja_l/2}\\
  &\times\prod_{1\leq j\leq k} \big|\zeta(1+2it_j+\tfrac 1{\log X}) \big|^{a^2_j/4-a_j/2}  \prod_{1\leq j<l \leq k} \Big|L \Big(1+i(t_j-t_l)+\tfrac 1{\log X}, \operatorname{sym}^2 f \Big) \Big|^{a_ja_l/2}\\
  &\times \Big|L \Big(1+i(t_j+t_l)+\tfrac 1{\log X}, \operatorname{sym}^2 f \Big) \Big|^{a_ja_l/2} 
\prod_{1\leq j\leq k} \Big|L \Big(1+2it_j+\tfrac 1{\log X}, \operatorname{sym}^2 f \Big) \Big|^{a^2_j/4+a_j/2}.
  \end{split}
\end{align}
Since
\begin{equation*}
    \sum_{j=1}^{\mathcal{J}}e^{-\frac {(Q+1)a}{\beta_j}}\ll 1,
\end{equation*}
we derive
\begin{align*}
&\sum_{j=1}^{\mathcal{J}}  \sum_{\substack{(p,2)=1 \\p \in \mathcal{S}(j) }}(\log p)\prod_{1\leq \ell\leq k}\big| L\big(1/2+it_{\ell},f \otimes \chi_{8p} \big) \big|^{a_{\ell}}\Phi\Big(\frac {p}{X}\Big)  \nonumber\\
   \ll &X (\log X)^{\frac{1}{4}\sum_{j=1}^k a_j^2}  \prod_{1\leq j<l \leq k} \big|\zeta (1+i(t_j-t_l)+\tfrac 1{\log X} ) \big|^{a_ja_l/2}\big|\zeta(1+i(t_j+t_l)+\tfrac 1{\log X}) \big|^{a_ja_l/2}\\
  &\times\prod_{1\leq j\leq k} \big|\zeta(1+2it_j+\tfrac 1{\log X}) \big|^{a^2_j/4-a_j/2}  \prod_{1\leq j<l \leq k} \Big|L \Big(1+i(t_j-t_l)+\tfrac 1{\log X}, \operatorname{sym}^2 f \Big) \Big|^{a_ja_l/2}\\
  &\times \Big|L \Big(1+i(t_j+t_l)+\tfrac 1{\log X}, \operatorname{sym}^2 f \Big) \Big|^{a_ja_l/2} 
\prod_{1\leq j\leq k} \Big|L \Big(1+2it_j+\tfrac 1{\log X}, \operatorname{sym}^2 f \Big) \Big|^{a^2_j/4+a_j/2},
\end{align*}
which completes the proof of Theorem \ref{sumL}.

\section{Proof of Theorem \ref{real}}






Define a non-negative smooth function
\begin{equation}\label{PhiD}
\Phi_D(t)\begin{cases}
            \leq 1,\quad  x\in [0,1/D]\cup [1-1/D,1],\\
            =1,\quad  x\in[1/D,1-1/D],\\
            =0,\quad \text{otherwise },
            \end{cases}
\end{equation}
satisfying $\Phi^{(j)}_D(t) \ll_j D^j$ for all integers $j \geq 0$, where $D$ is a parameter, to be specified later. By the Mellin transform and repeated integration by parts, then for any integer $A \geq 1$ and $\Re(s) \geq 1/2$, we have
\begin{align}
\label{whatbound}
 \widehat{\Phi}_D(s)=\int_0^\infty \Phi_D(t)t^{s-1}dt  \ll  D^{A-1}(1+|s|)^{-A}.
\end{align}

Note that 
the convexity inequality for the $m$-th power 
 implies that,  for any $x,y\in\mathbb{C}$ and integer $k\geq 1$, 
$$|x+y|^{k}\leq 2^{k-1}(|x|^{k}+|y|^{k}).$$
Using this inequality and inserting the smooth function $\Phi_D(\frac nY)$ into the definition of $U_m(X,Y;f)$, we obtain
\begin{align}\label{SmXY}
U_m(X,Y;f) \ll U_m^{(1)}(X,Y;f)+U_m^{(2)}(X,Y;f),
\end{align} 	
where
\begin{equation*}
   U_m^{(1)}(X,Y;f):= \sum_{2<p \leq X }(\log p)\Big | \sum_{n}\chi_{8p}(n)\lambda_f(n)\Phi_D \Big( \frac {n}{Y} \Big)\Big |^{m}
\end{equation*}
and
\begin{equation*}
    U_m^{(2)}(X,Y;f):=\sum_{2<p \leq X }(\log p)\bigg|\sum_{n\leq Y} \chi_{8p}(n)\lambda_f(n)\Big(1-\Phi_D \Big( \frac {n}{Y} \Big) \Big)\bigg|^{m}.
\end{equation*}

\begin{proposition}\label{Sm1}
With the notation as above,  and assuming the GRH, we have that for any integer $k \geq 1$ and any real numbers $m \geq 2k+2$ and  $\varepsilon>0$,
    \begin{align*}
    U_m^{(1)}(X,Y;f)\ll XY^{m/2}(\log X)^{ R(m,k,\varepsilon)},
    \end{align*}
 where $R(m,k,\varepsilon)$ is defined in \eqref{Edef}.
\end{proposition}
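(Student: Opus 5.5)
We write the smoothed sum by Mellin inversion and pass to $L$-values. Since $\Phi_D$ is supported in $[0,1]$, we have
\begin{align*}
\sum_{n}\chi_{8p}(n)\lambda_f(n)\Phi_D\Big(\frac nY\Big)=\frac{1}{2\pi i}\int_{(2)}L(s,f\otimes\chi_{8p})Y^s\widehat{\Phi}_D(s)\,\dif s.
\end{align*}
We shift the contour to $\Re(s)=1/2$. No poles are crossed, because $L(s,f\otimes\chi_{8p})$ is entire, and GRH gives polynomial growth in $|s|$ to justify the shift. This leaves
\begin{align*}
\Big|\sum_n\chi_{8p}(n)\lambda_f(n)\Phi_D\Big(\frac nY\Big)\Big|\ll Y^{1/2}\int_{\mathbb R}|L(1/2+it,f\otimes\chi_{8p})|\,|\widehat\Phi_D(1/2+it)|\,\dif t .
\end{align*}
By \eqref{whatbound}, the weight $|\widehat\Phi_D(1/2+it)|$ is $\ll 1/(1+|t|)$ (via $\widehat{\Phi}_D(s)\ll 1/|s|$) for $|t|\le D$, and decays rapidly beyond $D$. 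We take $D=\log X$ or a small power of it, so the tail $|t|>D^{1+\varepsilon}$ is negligible. We will also check that $U_m^{(2)}$ and the choice of $D$ are compatible; $U_m^{(2)}$ itself is handled in a separate proposition.

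We then raise to the $m$-th power, following the method of \cite{G&Zhao24-12} (going back to Szab\'o \cite{Szab}). Write $m=2k+(m-2k)$ with $m-2k\ge 2$. Apply H\"older's inequality to the $t$-integral with respect to the measure $\dif t/(1+|t|)$, which has total mass $\asymp\log D$. After interchanging the prime sum with the integrals, this gives
\begin{align*}
U_m^{(1)}\ll Y^{m/2}(\log X)^{O(\varepsilon)}\int\!\!\cdots\!\!\int \sum_{2<p\le X}(\log p)\prod_{j=1}^{k}|L(1/2+it_j,f\otimes\chi_{8p})|^{a_j}\,|L(1/2+it_{k+1},f\otimes\chi_{8p})|^{m-2k}\prod_j\frac{\dif t_j}{1+|t_j|}.
\end{align*}
Concretely, we use the $2k$ variables split symmetrically: we write $|\int|^{m}=|\int|^{2k}\,|\int|^{m-2k}$, expand the $2k$-th power as $k$ pairs of integrals, and bound the remaining $|\int|^{m-2k}$ by H\"older, giving a single variable with exponent $m-2k$. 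To keep exponents positive we arrange this as $k+1$ variables, where $k$ of them carry exponent $2$ and one carries exponent $m-2k$. The inner prime sum is bounded by Proposition \ref{prop}, giving
\[
X(\log X)^{(4k+(m-2k)^2)/4}\mathcal G_1\mathcal G_2 .
\]
The factor $\mathcal G_2$ contributes only $\log\log$ powers for $|t|\le D$, which are absorbed into $(\log X)^{\varepsilon}$. The factor $\mathcal G_1$ is a product of terms $g_1(|t_i\pm t_j|)$ and $g_1(2|t_i|)$.

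The main obstacle is the multiple integral of $\mathcal G_1$ against $\prod\dif t_j/(1+|t_j|)$. We decompose into ranges according to whether each $|t_i|$, $|t_i-t_j|$, $|t_i+t_j|$ is $\le 1/\log X$ or larger, where $g_1$ is either $\log X$ or $1/x$. We then count the powers of $\log X$ in each regime. The three terms in the maximum in \eqref{Edef} should come from three configurations:
\begin{itemize}
\item all $t_j$ clustered near $0$, giving $\frac{m^2}{2}-\frac{3m}{2}+k+1$, after accounting for the $a_j^2/4-a_j/2$ exponents of $g_1(2|t_j|)$;
\item the $k$ paired variables near $0$ and the exponent-$(m-2k)$ variable generic, giving $\frac{(m-2k)^2}{4}+2k^2-2k+1+\varepsilon$;
\item the heavy variable near $0$ and the others generic, giving $\frac{m(m-1)}{2}+2k(2k-m)+\varepsilon$.
\end{itemize}
Each one-dimensional integral of the form $\int g_1(|t-u|)^{b}\,\dif t/(1+|t|)$ is bounded by $(\log X)^{\max(b-1,0)+\varepsilon}$. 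The hypothesis $m\ge 2k+2$ is what makes the relevant exponents exceed $1$, so that the $\min$/$\max$ bookkeeping yields exactly these three terms. We would carry out this case analysis iteratively, integrating out one variable at a time, exactly as in \cite[Section 4]{G&Zhao24-12}.
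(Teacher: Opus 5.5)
\textbf{There is a genuine gap.} It sits in the step where you bound the leftover factor $\big(\int|L(\tfrac12+iu,f\otimes\chi_{8p})|\,\dif\mu(u)\big)^{m-2k}$, with $\dif\mu(u)=\dif u/(1+|u|)$, by H\"older over the whole $u$-range. After that step, your bound is the integral of the majorant from Proposition \ref{prop} (with $a_1=\dots=a_k=2$, $a_{k+1}=m-2k$) against $\prod\dif t_j/(1+|t_j|)$. No case analysis can make that integral small.

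To see this, take the box $t_1,\dots,t_k,u\in[0,1/(10\log X)]$. There every $g_1$ equals $\log X$ and $\mathcal G_2=1$, and the exponents add up to $\frac14\sum_j a_j^2+\sum_{i<j}a_ia_j+\sum_j\big(\frac{a_j^2}{4}-\frac{a_j}{2}\big)=\frac{m(m-1)}{2}$. The box has measure $\asymp(\log X)^{-(k+1)}$. So your chain of inequalities gives at best $XY^{m/2}(\log X)^{m(m-1)/2-k-1}$. This exceeds the first term $\frac{m^2}{2}-\frac{3m}{2}+k+1$ of $R(m,k,\varepsilon)$ by exactly $m-2k-2$, and it also exceeds the other two terms. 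For example, with $k=1$, $m=5$ you get exponent $8$ instead of $R=7$.

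So your first bullet is not what your setup actually produces, and you reach $R$ only in the boundary case $m=2k+2$. The loss is intrinsic to global H\"older. If $|L(\tfrac12+iu)|$ is concentrated on an interval of length $1/\log X$, then replacing $(\int|L|)^{m-2k}$ by $(\text{mass})^{m-2k-1}\int|L|^{m-2k}$ costs about $(\log X)^{m-2k-1}$.

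\textbf{The missing idea is the paper's localized H\"older inequality (after Szab\'o).} You restrict $u$ to the cell $\mathcal D=\{u:|t_1-u|\le\cdots\le|t_k-u|\}$ and split $u-t_1$ into the shells $\mathcal B_h$ of width $\asymp e^{h}/\log X$. Then you apply a weighted H\"older inequality over $h$, followed by H\"older inside each shell. This produces the factor $h_0^{m-2k}|\mathcal B_{h_0}|^{m-2k-1}$ in \eqref{Lintest}. When $u$ lies within $e^{h_0}/\log X$ of $t_1$, the factor $(e^{h_0}/\log X)^{m-2k-1}$ compensates for the large values of $g_1(|t_i\pm u|)$. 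This gives the exponent $\frac{m(m-1)}{2}-(m-2k-1)-k$ in Case 1, and the cases $h_0=H$ with $|u|>5$ and $|u|\le 5$ give the other two terms of $R$.

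\textbf{Separately, your choice $D=\log X$ is not compatible with Proposition \ref{Sm2}.} Its proof needs $X^{1+\varepsilon}YD^{-1}\ll X^{1-2\varepsilon}Y$ in \eqref{pocs2}, so $D$ must be a small power of $X$; this is why the paper takes $D=X^{\varepsilon}$. With that $D$, the measure $\dif t/(1+|t|)$ has mass $\asymp\log X$, so your global H\"older (and Cauchy--Schwarz) would cost a further $(\log X)^{m-k-1}$. The paper avoids this by splitting $t$ into blocks $[e^{n-1}-1,e^{n}-1]$ and applying H\"older with weights $n^{a}$, $a=1-1/m+\varepsilon$. The resulting factor $n^{m-1+\varepsilon}e^{-nm}$ then absorbs the powers $B^{k},B^{m-k},B^{m-k-1}$ of $B=e^{n}$ in \eqref{lemma50}.
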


\begin{proposition}\label{Sm2}
    With the notation as above, and assuming the GRH, we have that for $m \geq 4$,
\begin{equation*}
U_m^{(2)}(X,Y;f) \ll XY^{m/2}.
\end{equation*}
\end{proposition}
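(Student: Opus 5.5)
The statement to prove is Proposition \ref{Sm2}: $U_m^{(2)}(X,Y;f)\ll XY^{m/2}$ for $m\ge 4$. The idea is that $1-\Phi_D(n/Y)$ restricted to $n\le Y$ is supported only on the two short ranges $n\le Y/D$ and $Y-Y/D\le n\le Y$. We bound the $m$-th power by a high even moment through Hölder, and then compute that even moment directly with Lemma \ref{character}. Then $D$ is chosen as a suitable power of $\log X$, or more precisely a small power of $X$ consistent with Proposition \ref{Sm1}, so that the short-interval saving absorbs the loss.

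\emph{Hölder step.} Write $a_n=\lambda_f(n)(1-\Phi_D(n/Y))\mathds{1}_{n\le Y}$, so that $|a_n|\le d(n)$. Let $S(p)=\sum_n a_n\chi_{8p}(n)$ and choose an even integer $2\ell\ge m$. Hölder gives
\[
U_m^{(2)}\le \Big(\sum_{2<p\le X}\log p\Big)^{1-m/(2\ell)}\Big(\sum_{2<p\le X}(\log p)|S(p)|^{2\ell}\Big)^{m/(2\ell)}.
\]
Since $a_n$ is real, expanding $|S(p)|^{2\ell}$ gives $\sum_{n_1,\dots,n_{2\ell}}\prod a_{n_i}\,\chi_{8p}(n_1\cdots n_{2\ell})$. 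We apply Lemma \ref{character} with $n=n_1\cdots n_{2\ell}\le Y^{2\ell}\le X^{2\ell}$. The diagonal contributes
\[
X\sum_{n_1\cdots n_{2\ell}=\square}\prod|a_{n_i}|.
\]
The error term is $\ll X^{1/2}\log^2X\,\big(\sum_n|a_n|\big)^{2\ell}\ll X^{1/2}Y^{2\ell}(\log X)^{O(1)}$.

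\emph{Diagonal estimate (main step).} We must show
\[
\sum_{n_1\cdots n_{2\ell}=\square}\prod|a_{n_i}|\ll Y^{\ell}\,(\text{power of }\log X)\cdot D^{-c}
\]
for some $c>0$. Each $n_i$ lies in a set $I$ of total measure $\ll Y/D$: the interval $[1,Y/D]$ together with an interval of length $Y/D$ near $Y$. Two routes are available.

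The first route is to bound the square-constrained count crudely. Writing $n_i=r_is_i^2$ with $r_i$ squarefree, the condition $\prod r_i=\square$ means the multiset of squarefree parts pairs up. This gives the bound $\ll (\log Y)^{O(1)}\big(\sum_{n\in I}d(n)^{2}\big)^{\ell}$, and since $\sum_{n\in I}d(n)^2\ll (Y/D)(\log Y)^3$, the diagonal is $\ll (Y/D)^\ell(\log Y)^{O(1)}$.

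The second route is to compare with Lemma \ref{sumoversquare} applied to smooth majorants of $\mathds{1}_I$.

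Either way, the total is
\[
U_m^{(2)}\ll X\Big((Y/D)^{\ell}(\log X)^{O(1)}+X^{-1/2}Y^{2\ell}(\log X)^{O(1)}\Big)^{m/(2\ell)}.
\]

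\emph{Handling the error term.} The error $X^{-1/2}Y^{2\ell}$ is fine only if $Y\le X^{1/(2\ell)-\varepsilon}$-ish. For larger $Y$ I would instead take $2\ell=4$ (allowed, since $m\ge4$, with $\ell=2$ and the $L^\infty$-type bound used for the excess power $m-4$). Alternatively, I would handle the large-$Y$ range as in \cite{G&Zhao24-12}: bound $S(p)$ by Perron's formula in terms of $\int|L(1/2+it,f\otimes\chi_{8p})|\,|\widehat{(1-\Phi_D)}(1/2+it)|\,dt$, using \eqref{whatbound} and Lemma \ref{rude} to get $Y^{1/2}$ times $(\log X)^{O(1)}$. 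This gives $U_m^{(2)}\ll XY^{m/2}(\log X)^{O(1)}D^{-c}$, and taking $D=(\log X)^{B}$ with $B$ large yields $XY^{m/2}$.

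I expect the main obstacle to be exactly this trade-off. We need a genuine saving $D^{-c}$ from the short ranges, while the Perron/$L$-function route loses a factor $D^{A-1}$ from \eqref{whatbound}. Balancing these requires splitting $1-\Phi_D$ into its dyadic pieces near $0$ and near $Y$ and treating each piece with shifted moments (Proposition \ref{prop}).
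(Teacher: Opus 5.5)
\textbf{There is a genuine gap: your argument does not cover the main range of $Y$.}

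\emph{The moment expansion only works for small $Y$.} Expanding $|S(p)|^{2\ell}$ and applying Lemma \ref{character} term by term, the off-diagonal error is the pointwise GRH bound $X^{1/2}\log^2 X$ summed over all $2\ell$-tuples. That is $X^{1/2}(Y/D)^{2\ell}(\log X)^{O(1)}$, which is admissible only if $(Y/D)^{\ell}\ll X^{1/2}$. For $2\ell=4$ this stops at $Y\approx X^{1/4}D$, and even a second moment computed this way would stop at $Y\approx X^{1/2}D$. The proposition is needed for all $Y\le X$, and switching to $2\ell=4$ plus an $L^\infty$ bound does not change this threshold.

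\emph{The Perron fallback gives no factor $D^{-c}$.} Because of the sharp cutoff at $n=Y$, the Mellin transform of $\mathds{1}_{[0,1]}(t)(1-\Phi_D(t))$ at $1/2+i\tau$ is $\asymp 1/|\tau|$ once $|\tau|$ is large compared with $D$. This is the boundary term at $t=1$, where $1-\Phi_D(1)=1$. So after taking absolute values, the piece $Y^{1/2}\int_D^{Y}|L(1/2+i\tau,f\otimes\chi_{8p})|\,d\tau/\tau$ is as large as in the unsmoothed problem. Perron's formula with Lemma \ref{rude} then only gives $XY^{m/2}(\log X)^{O(1)}$, and the statement has no logarithm to spare. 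Your last paragraph recognizes this, but the dyadic/shifted-moment repair is not carried out. A smaller point: in your diagonal estimate, $\prod r_i=\square$ with $r_i$ squarefree does not force the $r_i$ to pair up; for example $6\cdot 10\cdot 15=30^2$.

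\textbf{The missing idea} is to extract the $D$-saving only from a second moment, where a large sieve makes it available uniformly for $Y\le X$, and to use crude bounds everywhere else. The paper does this as follows.
\begin{enumerate}
\item Cauchy--Schwarz bounds $U_m^{(2)}$ by the square root of the second moment times the square root of the $(2m-2)$-th moment.
\item For the second moment, positivity lets you replace the primes by all odd squarefree $d\le X$, at a cost of $\log X$. Heath-Brown's quadratic large sieve \cite[Corollary 2]{DRHB} then gives $(XY)^{\varepsilon}(X+Y)\sum_{n_1n_2=\square}|a_{n_1}a_{n_2}|\ll X^{1+\varepsilon}Y/D$. This holds because pairs with square product inside $[1,Y/D]\cup[Y(1-1/D),Y]$ number $\ll Y/D$ up to logarithms.
\item For the $(2m-2)$-th moment, separate the sharp sum $\sum_{n\le Y}$ from the $\Phi_D$-smoothed sum. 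The smoothed part is handled by Proposition \ref{Sm1}. The sharp part is handled by Perron's formula, using Lemma \ref{fdiff} with $k=1$ on the line $\Re s=1/2$ and Lemma \ref{rude} on the horizontal segments. This gives $XY^{m-1}(\log X)^{O(1)}$, and no saving is needed here.
\item Taking $D=X^{3\varepsilon}$ yields $U_m^{(2)}\ll X^{1-\varepsilon}Y^{m/2}(\log X)^{O(1)}\ll XY^{m/2}$.
\end{enumerate}
Note that $D$ must be a power of $X$, not $(\log X)^{B}$ as you suggest. It has to absorb both the $(XY)^{\varepsilon}$ loss in the large sieve and the $(\log X)^{O(1)}$ loss in the higher moment.
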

By (\ref{SmXY}), Proposition \ref{Sm1} and Proposition \ref{Sm2},  we complete the proof of Theorem \ref{real}. 
\smallskip

In what follows, we present the proofs of Proposition \ref{Sm1} and Proposition \ref{Sm2}.

\subsection{Proof of Proposition \ref{Sm1}}

To handle $U_m^{(1)}(X,Y;f)$, we  follow the initial manipulations performed in Section 5.1 of \cite{zhao2025} with parameter $D=X^{\varepsilon}$ and arrive at the bound
\begin{align}
\label{first}
U^{(1)}_m(X,Y;f)
   \ll  Y^{m/2} \sum_{2<p \leq X }(\log p)
   \Big | \int\limits_{\substack{ |t| \leq X^{\varepsilon}}}\Big |L( 1/2+it, f\otimes\chi_{8p})\Big |\frac 1{1+|t|}  dt\Big |^{m}
   +O(XY^{m/2}).
\end{align}

Proposition \ref{Sm1} follows from the following lemma.

\begin{lemma}\label{fdiff}
With the notation as above,  and assuming the truth of GRH, we have that for any integer $k \geq 1$ and any real numbers $m \geq 2k+2$ and  $\varepsilon>0$,
\begin{equation*}
\sum_{2<p \leq X }(\log p)
   \bigg | \int\limits_{\substack{ |t| \leq X^{\varepsilon}}}\Big |L(1/2+it, f\otimes\chi_{8p})\Big |\frac 1{1+|t|}dt\bigg |^{m} \ll X(\log X)^{ R(m,k,\varepsilon)},
\end{equation*}
where $R(m,k,\varepsilon)$ is defined in \eqref{Edef}.
\end{lemma}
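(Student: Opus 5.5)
We follow the strategy of \cite{G&Zhao24-12} (itself modelled on Szab\'o \cite{Szab}), replacing the shifted-moment input by Proposition \ref{prop}. Write $\mathcal{L}(p,t)=|L(1/2+it,f\otimes\chi_{8p})|$. Since $m\ge 2k+2$, we set $m=2k+(m-2k)$ and use the trivial splitting
\begin{align*}
\Big(\int_{|t|\le X^{\varepsilon}}\frac{\mathcal{L}(p,t)}{1+|t|}dt\Big)^{m}=\Big(\int\frac{\mathcal{L}(p,t)}{1+|t|}dt\Big)^{2k}\Big(\int\frac{\mathcal{L}(p,t)}{1+|t|}dt\Big)^{m-2k}.
\end{align*}
For the second factor we apply H\"older with respect to the measure $dt/(1+|t|)$, whose total mass is $\ll\varepsilon\log X$. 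This gives $(\int \mathcal{L}\,\frac{dt}{1+|t|})^{m-2k}\le (\log X)^{m-2k-1}\int \mathcal{L}(p,t)^{m-2k}\frac{dt}{1+|t|}$. Expanding the $2k$-th power as a $2k$-fold integral and interchanging sums, the left side of Lemma \ref{fdiff} is bounded by
\begin{align*}
(\log X)^{m-2k-1}\int_{[-X^\varepsilon,X^\varepsilon]^{2k+1}}\sum_{2<p\le X}(\log p)\prod_{j=1}^{2k}\mathcal{L}(p,t_j)\,\mathcal{L}(p,t_{0})^{m-2k}\prod_{j=0}^{2k}\frac{dt_j}{1+|t_j|}.
\end{align*}
We apply Proposition \ref{prop} with $2k+1$ shifts, exponents $a_j=1$ for $1\le j\le 2k$ and $a_0=m-2k$. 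The inner sum is then $\ll X(\log X)^{((m-2k)^2+2k)/4}\mathcal{G}_1\mathcal{G}_2$. Because $|t_j|\le X^\varepsilon$, every $g_2$ factor is at most $(\log\log X)^{O(1)}$, which is absorbed into $(\log X)^{\varepsilon}$. Everything therefore reduces to integrating $\mathcal{G}_1$.

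The bound $g_1(x)\le \min(\log X,1/x)$ holds for $x\le 10$, and for larger $x$ it costs only $\log\log$ factors. The product $\mathcal{G}_1$ contains pairwise factors $g_1(|t_i\pm t_j|)^{a_ia_j/2}$ and diagonal factors $g_1(|2t_i|)^{a_i^2/4-a_i/2}$. For $a_i=1$ the diagonal exponent is $-1/4$, which is harmless: it is at most $(\log X)^{1/4}$ when we need an upper bound, since $g_1\ge$ const. For $a_0=m-2k$ the diagonal exponent is positive.

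We carry out the integration by splitting according to which shifts cluster near each other, or near $0$, at scale $1/\log X$ versus scale $\ge1$. Three regimes appear:
\begin{itemize}
\item[(i)] All $t_j$ are within $O(1/\log X)$ of $0$. All factors then take the value $\log X$, each variable's measure is about $1/\log X$, and counting exponents gives the first term $\frac{m^2}{2}-\frac{3m}{2}+k+1$ of $R(m,k,\varepsilon)$.
\item[(ii)] $t_0$ is far from $0$ and from the others. Only the $a_0$-diagonal factor and the mutual interactions of the $2k$ unit shifts survive, which produces the term $\frac{(m-2k)^2}{4}+2k^2-2k+1+\varepsilon$.
\item[(iii)] $t_0$ is near $0$ but the unit shifts are spread out. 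This produces $\frac{m(m-1)}{2}+2k(2k-m)+\varepsilon$.
\end{itemize}
The maximum of the three regimes is $R(m,k,\varepsilon)$. Intermediate configurations are handled by integrals of the type $\int\min(\log X,1/|t-u|)^{b}\frac{dt}{1+|t|}$. These are $\ll(\log X)^{b-1}$ if $b>1$, $\ll\log X\cdot\log\log X$ if $b=1$, and $O(1)$-type if $b<1$, and they give exponents interpolating between the three regimes.

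The main obstacle is this last bookkeeping. The pairwise exponents $a_ia_j/2$ equal $1/2$ among the unit shifts and $(m-2k)/2$ between $t_0$ and a unit shift, so one must integrate the variables in a good order. Following \cite[Section 4]{G&Zhao24-12}, we first integrate out the unit shifts one at a time, using H\"older to decouple the product of factors into single-variable integrals of the above type. We then integrate over $t_0$, splitting into $|t_0|\le1/\log X$, $1/\log X\le|t_0|\le 1$, and $|t_0|\ge1$. In each range we check that the exponent obtained does not exceed $R(m,k,\varepsilon)$, and the condition $m\ge 2k+2$ ensures that $(m-2k)^2/4-(m-2k)/2\ge0$. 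Combining with (\ref{first}) then yields Proposition \ref{Sm1}.
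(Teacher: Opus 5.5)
There is a genuine gap, and it is in your first step. The global H\"older inequality for the measure $dt/(1+|t|)$ on $[-X^{\varepsilon},X^{\varepsilon}]$ costs $(\varepsilon\log X)^{m-2k-1}$ regardless of where $t_0$ sits, and nothing later can recover this loss.

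To see this, restrict your $(2k+1)$-fold integral to the box $|t_j|\le 1/\log X$, $0\le j\le 2k$. On this box every $g_1$ is $\asymp\log X$ and every $g_2$ equals $1$. With your exponents ($a_j=1$ for $j\ge 1$, $a_0=m-2k$), Proposition~\ref{prop} therefore gives $X(\log X)^{m(m-1)/2}$. This is the expected true order of $\sum_p(\log p)|L(1/2,f\otimes\chi_{8p})|^m$, so a sharper shifted-moment input would not help. The box has measure $\asymp(\log X)^{-(2k+1)}$, hence your bound is at least
\begin{equation*}
X(\log X)^{\frac{m(m-1)}{2}+(m-2k-1)-(2k+1)}=X(\log X)^{\frac{m^2}{2}+\frac{m}{2}-4k-2}.
\end{equation*}
This exceeds $\frac{m^2}{2}-\frac{3m}{2}+k+1$ by $2m-5k-3$. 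It also exceeds the other two entries of \eqref{Edef} for every $m\ge 2k+2$. So the argument fails whenever $2m>5k+3$; for example, $k=1$, $m=5$ gives $(\log X)^{9}$ against $R(5,1,\varepsilon)=7$. Your claim that regime (i) yields the first entry of $R$ is therefore wrong. Regimes (ii) and (iii) are asserted rather than computed, and the three entries of $R$ come from the paper's setup with $k$ squared factors ($a_j=2$), not from $2k$ unit shifts.

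The missing idea is to localize H\"older, which the paper does twice.
\begin{itemize}
\item[(a)] It first splits the $t$-range into blocks $[e^{n-1}-1,e^{n}-1]$ and applies H\"older with weights $n^{a}$, $a=1-1/m+\varepsilon$. This costs only $n^{m-1+\varepsilon}$, set against $e^{-nm}$ (see \eqref{lemma51}).
\item[(b)] Inside a block it orders the shifts by distance to $u$. It then applies H\"older to the $(m-2k)$-th power only on the shell $|t_1-u|\in\mathcal{B}_{h_0}$, at cost $h_0^{m-2k}|\mathcal{B}_{h_0}|^{m-2k-1}$. That cost is offset by $g_1(|t_1\pm u|)\ll(\log X)e^{-h_0}$.
\end{itemize}
In your regime (i) the shell factor is $(\log X)^{-(m-2k-1)}$, a gain rather than a loss.

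One caution if you follow that route. Passing from $(\int_0^B|L|\,dt)^{2k}$ to $\int_{[0,B]^k}\prod_{a\le k}|L(1/2+it_a,f\otimes\chi_{8p})|^2\,d\mathbf{t}$ needs Cauchy--Schwarz and hence a factor $B^k$. That factor does not appear in the paper's display or in \eqref{lemma50}. Convergence of the $n$-sum rests on those powers of $B$; for $k=1$ the Case~2(1) term would become $\sum_{n} n^{m-1+\varepsilon}\asymp(\log X)^{m+\varepsilon}$. So that bookkeeping should be redone rather than copied.
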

\begin{proof}
By symmetry and H\"older's inequality for $a=1-1/m+\varepsilon$ with $\varepsilon>0$, we get
\begin{align}\label{lemma51}
 &\sum_{2<p \leq X }(\log p)\Big | \int_{ |t| \leq X^{\varepsilon}}   \Big |L(1/2+it, f\otimes\chi_{8p})\Big|\frac {d t}{|t|+1} \Big |^{m} \nonumber\\
 \ll & \sum_{2<p \leq X }(\log p)\Big |\int_0^{X^{\varepsilon}} \frac{|L(1/2+it, f\otimes\chi_{8p})|}{t+1} d t\Big |^{m} \nonumber\\
 \ll & \sum_{2<p \leq X }(\log p)\Big |\sum_{n\leq \log X+1}\int_{e^{n-1}-1}^{e^n-1} \frac{|L(1/2+it, f\otimes\chi_{8p})|}{t+1} d t\Big |^{m}\nonumber\\
  \leq & \sum_{2<p \leq X }(\log p)\bigg(\sum_{n\leq \log X+1} n^{-am/(m-1)} \bigg)^{m-1}
    \sum_{n\leq  \log X+1} \bigg(n^a\int_{e^{n-1}-1}^{e^{n}-1 } \frac{|L(1/2+it, f\otimes\chi_{8p}) |}{t+1} d t\bigg)^{m}   \nonumber\\
   \ll & \sum_{n\leq  \log X+1} \frac{n^{m-1+\varepsilon} }{e^{nm} } \sum_{2<p \leq X }(\log p)\bigg( \int_{e^{n-1}-1}^{e^{n}-1 } |L(1/2+it, f\otimes\chi_{8p}) | d t \bigg)^{m}.
\end{align}
Below we estimate the sum
\begin{equation*}
    T_m(B,X):=\sum_{2<p \leq X }(\log p)\bigg( \int_{0}^{B} |L(1/2+it, f\otimes\chi_{8p}) | d t \bigg)^{m},
\end{equation*}
where $10 \leq B=X^{O(1)}$. The method for obtaining the upper bound for $T_m(B,X)$ is similar to the proof of (6.1) in B. Szab\'o \cite{Szab}.

First, we extract $2k$ integrals, which is permissible since $m\geq 2k+2$ by assumption. Using the notation $d {\mathbf t} =dt_1\cdots dt_k$, we obtain
\begin{align*}
   T_m(B,X)
      \ll \sum_{2<p \leq X }(\log p)\int_{[0,B]^k}\prod_{a=1}^k|L(1/2+ it_a, f\otimes\chi_{8p})|^2 \bigg(\int_{\mathcal{D} }|L(1/2+iu,f\otimes\chi_{8p})| d u \bigg)^{m-2k} d\mathbf{t},
\end{align*}
where $\mathcal{D}=\mathcal{D}(t_1,\ldots,t_k)=\{ u\in [0,B]:|t_1-u|\leq |t_2-u|\leq \cdots \leq |t_k-u| \}$. Here, we restrict the integration over $\mathcal{D}$ by symmetry.


In \cite{zhao2025}, the estimation strategy for $T_m(B,X)$ is introduced in detail, so we omit the details here. 
We define the following dyadic intervals:
\begin{align*}
    &\mathcal{B}_1=\big[-\frac{1}{\log X},\frac{1}{\log X}\big],  \\
    &\mathcal{B}_h=\big[-\frac{e^{h-1}}{\log X}, -\frac{e^{h-2}}{\log X}\big]
  \cup \big[\frac{e^{h-2}}{\log X}, \frac{e^{h-1}}{\log X}\big], \quad \text{for}\quad 2\leq h< \lfloor \log \log X\rfloor+10 =: H,\\
&\mathcal{B}_H=[-B,B]\setminus \bigcup_{1\leq h<H} \mathcal{B}_h.
\end{align*}
Define 
$$\mathcal{A}_h:=\mathcal{B}_h\cap (-t_1+\mathcal{D}),$$
where $c+[a,b]$ denotes the translated interval $[c+a,c+b]$.
For $\mathbf{t}=(t_1,\ldots,t_k)$, we write
\begin{equation}\label{Lnew}
    L(\mathbf{t},u)=\sum_{\substack{p \leq X \\ (p,2)=1}}(\log p)\prod_{a=1}^k|L( 1/2+it_a, f\otimes\chi_{8p})|^2 \cdot |L( 1/2+iu, f\otimes\chi_{8p})|^{m-2k}.
\end{equation}

Through a process similar to that in \cite{zhao2025} (pages 1577--1578), we can obtain
\begin{align}\label{Lintest}
    T_m(B,X)
     \ll  \sum_{1\leq h_0, h_1, \ldots h_{k-1}\leq H} h_0^{m-2k} |\mathcal{B}_{h_0}|^{m-2k-1} \int_{\mathcal{C}_{h_0,h_1, \cdots, h_{k-1}}} L(\mathbf{t},u) \,d u \,d \mathbf{t},
\end{align}
where
$$\mathcal{C}_{h_0,h_1, \cdots, h_{k-1}}=\{(t_1,\ldots,t_k,u)\in [0,B]^{k+1}: u\in t_1+ \mathcal{A}_{h_0},\, |t_{i+1}-u|-|t_i-u|\in \mathcal{B}_{h_i}, \,1 \leq i \leq k-1\}.$$

Using Proposition \ref{prop} to bound $L(\mathbf{t},u)$, for $(t_1,\ldots,t_k,u)\in \mathcal{C}_{h_0,h_1, \cdots, h_{k-1}}$, we have
\begin{align}\label{L(t,u)}
L(\mathbf{t},u)
\ll X(\log X)^{\frac{4k+(m-2k)^2}{4}} \mathcal{G}_1\cdot\mathcal{G}_2,
\end{align}
where $\mathcal{G}_1$ and  $\mathcal{G}_2$ are defined in \eqref{G1G2}.
Below, we provide upper bounds of $g_1(\alpha)$  and $g_2(\alpha)$ for two cases, with $\alpha$ in different ranges,  and deduce the corresponding upper bound for (\ref{Lintest}).

\textbf{Case 1:} $h_0<H$.

By the definition of $\mathcal{C}_{h_0,h_1, \cdots, h_{k-1}}$ and the observation that $t_i, u \geq 0, 1\leq i \leq k$, we have
\begin{equation*}
     \frac{e^{h_0}}{\log X}\ll |t_1-u|\ll |t_1+u|\ll B =X^{O(1)}.
\end{equation*}
 Recalling the definition of $g_1$ in \eqref{g1Def}, we derive
 \begin{equation*}
     g_1(|t_1\pm u|)\ll \frac{\log X}{e^{h_0}} \log \log B.
 \end{equation*}
From the definition of $\mathcal{A}_j$, we know that $|t_2-u|\geq |t_1-u|$, furthermore
\begin{equation*}
    \frac{e^{h_0}}{\log X}+ \frac{e^{h_1}}{\log X}\ll |t_2-u|= |t_1-u|+(|t_2-u|-|t_1-u|)\ll |t_2+u|\ll B=X^{O(1)},
\end{equation*}
which implies that
\begin{equation*}
    g_1(|t_2\pm u|)\ll \frac{\log X}{e^{\max(h_0,h_1) }} \log \log B.
\end{equation*}
Similarly, for any $1 \leq i \leq k$, we have
\begin{equation}\label{gtiu}
    g_1(|t_i\pm u|)\ll \frac{\log X}{e^{\max(h_0,h_1,\ldots, h_{i-1})} }\log \log B.
\end{equation}
Moreover, for any $1 \leq i < j \leq k$, we have
\begin{equation*}
    \sum^{j-1}_{s=i}(|t_{s+1}-u|-|t_s-u|) \leq |t_j-t_i| \leq |t_j+t_i|,
\end{equation*}
then
\begin{equation}\label{gtjti}
    g_1(|t_{j}\pm t_i|)\ll \frac{\log X }{e^{\max(h_i,\ldots, h_{j-1} ) }} \log \log B.
\end{equation}
Trivially, from the definition of $g_1$ and $g_2$ in (\ref{g1Def}), we get
\begin{equation}\label{g2ug2ti}
  \begin{split}
   g_1(|2u|)\ll \log X,\quad &g_1(|2t_i|)\ll \log X,\,\,\,\,\text{for any}\,\,\,1\leq i \leq k,\\
   g_2(x)\ll &\log\log B, \,\,\,\,\text{for any}\,\,\, x\in [e^e, e^X].
  \end{split}
\end{equation}

By (\ref{L(t,u)})--(\ref{g2ug2ti}), we deduce that
\begin{align}\label{L(t,u)1}
     & L(\mathbf{t},u) \nonumber\\
     \ll & X(\log X)^{\frac{(m-2k)^2+4k}{4}+\frac{(m-2k)^2}{4}-\frac{m-2k}{2}}(\log \log B)^{O(1)}\nonumber\\
     &\quad\quad\quad\quad\quad\quad\quad\times\bigg(\prod^{k-1}_{i=0}\frac{\log X}{e^{ \max(h_0,h_1,\ldots, h_{i}) }} \bigg)^{2(m-2k)}
     \bigg(\prod^{k-1}_{i=1} \prod^{k}_{j=i+1}\frac{\log X}{e^{\max(h_i,\ldots, h_{j-1} ) } } \bigg)^4 \nonumber\\
     = & X(\log X)^{\frac{m(m-1)}{2}}(\log \log B)^{O(1)} \\ \nonumber
     &\quad\quad\quad\times\exp\Big( -2(m-2k)\sum^{k-1}_{i=0}\max(h_0,h_1,\ldots, h_{i})-4\sum^{k-1}_{i=1} \sum^{k}_{j=i+1}\max(h_i,\ldots, h_{j-1} )\Big).
\end{align}
The volume of the region $\mathcal{C}_{h_0,h_1, \cdots, h_{k-1}}$ is $\ll  B^k e^{h_0+h_1+\cdots+h_{k-1}} (\log X)^{-k}$, and $ |\mathcal{B}_{h_0}|\ll e^{h_0}/\log X$. Inserting (\ref{L(t,u)1}) into (\ref{Lintest}), we obtain
\begin{align}\label{case1}
       &  \sum_{\substack{1\leq h_0<H\\ 1\leq h_1, \ldots h_{k-1}\leq H}}  h_0^{m-2k} |\mathcal{B}_{h_0}|^{m-2k-1} \int\limits_{\mathcal{C}_{h_0,h_1, \cdots, h_{k-1}}} L(\mathbf{t},u) \,d u \,d\mathbf{t} \nonumber\\
    \ll & X(\log X)^{m^2/2-3m/2+k+1}B^k(\log \log B)^{O(1)}\sum_{\substack{1\leq h_0<H\\ 1\leq h_1, \ldots h_{k-1}\leq H}}  h_0^{m-2k} \times \exp\Big( (m-2k-1)h_0 \nonumber\\
    &+\sum^{k-1}_{i=0}h_i-2(m-2k)\sum^{k-1}_{i=0}\max(h_0,h_1,\ldots, h_{i})-4\sum^{k-1}_{i=1} \sum^{k}_{j=i+1}\max(h_i,\ldots, h_{j-1} )\Big) \nonumber\\
    = & X(\log X)^{m^2/2-3m/2+k+1}B^k(\log \log B)^{O(1)}  \sum_{\substack{1\leq h_0<H\\ 1\leq h_1, \ldots h_{k-1}\leq H}}  h_0^{m-2k}\times
 \exp\Big(-(m-2k)h_0
 \nonumber\\
      & -3\sum^{k-1}_{i=1}h_i -2(m-2k)\sum^{k-1}_{i=1}\max(h_0,h_1,\ldots, h_{i})-\sum^{k-1}_{i=1} \sum^{k}_{j=i+2}\max(h_i,\ldots, h_{j-1} )\Big) \nonumber\\
    \ll &   X(\log X)^{m^2/2-3m/2+k+1}B^k(\log \log B)^{O(1)}.
\end{align}

\textbf{Case 2:} $h_0=H$.

(1) $|u|> 5$.
For this case, we have $e^{h_0}\ll \log X$. Hence, for any $1 \leq i \leq k$, we get
\begin{equation}\label{gtiu1}
    g_1(|t_i\pm u|)\ll \log\log  B.
\end{equation}
The estimate for $g_1(|t_j\pm t_i|)$ is the same as in (\ref{gtjti}).  We estimate $g_1(|2t_i|)$ and $g_2$ by the trivial bound, and since $|u| > 5$, we have
\begin{equation}\label{g2ug2ti1}
  \begin{split}
   g_1(|2u|) \ll \log\log  B,&\quad\quad g_1(|2t_i|) \ll \log X,\,\,\text{for any}\,\,1\leq i\leq k,\\
   g_2(x)\ll &\log\log B, \,\,\,\,\text{for any}\,\,\, x\in [e^e, e^X].
  \end{split}
\end{equation}
By (\ref{L(t,u)}), (\ref{gtjti}), (\ref{gtiu1}) and (\ref{g2ug2ti1}), we derive
\begin{align}\label{L(t,u)2}
     &L(\mathbf{t},u) \nonumber\\
     \ll & X(\log X)^{\frac{(m-2k)^2+4k}{4}}(\log \log B)^{O(1)}
     \bigg(\prod^{k-1}_{i=1} \prod^{k}_{j=i+1}\frac{\log X}{e^{\max(h_i,\ldots, h_{j-1} ) } } \bigg)^4 \nonumber\\
     = & X(\log X)^{\frac{(m-2k)^2}{4}+2k^2-k}(\log \log B)^{O(1)} 
\exp\Big( -4\sum^{k-1}_{i=1} \sum^{k}_{j=i+1}\max(h_i,\ldots, h_{j-1} )\Big).
\end{align}
The volume of the region $\mathcal{C}_{H,h_1, \cdots, h_{k-1}}$ is
\begin{equation}\label{Vol}
    \ll B^{k+1} e^{h_1+\cdots+h_{k-1}} (\log X)^{-k+1}.
\end{equation}
As $|\mathcal{B}_H|\ll B$, we deduce from (\ref{Lintest}), (\ref{L(t,u)2}) and (\ref{Vol}) that
\begin{align}\label{case21}
     &\sum_{\substack{1\leq h_1, \ldots h_{k-1}\leq H}}  H^{m-2k} |\mathcal{B}_{H}|^{m-2k-1} \int\limits_{\substack{\mathcal{C}_{H,h_1, \cdots, h_{k-1}} \\ |u| \geq 5}} L(\mathbf{t},u) \,d u\, d \mathbf{t}  \nonumber\\
     \ll & X(\log X)^{\frac{(m-2k)^2}{4}+2k^2-2k+1}B^{m-k}(\log\log X)^{m-2k} (\log \log B)^{O(1)}\nonumber\\
     & \hspace*{2cm} \times \sum_{\substack{1\leq h_1, \ldots h_{k-1}\leq H}} \exp\Big( \sum^{k-1}_{i=1}h_i-4\sum^{k-1}_{i=1} \sum^{k}_{j=i+1}\max(h_i,\ldots, h_{j-1} )\Big) \nonumber\\
     \ll &   X(\log X)^{\frac{(m-2k)^2}{4}+2k^2-2k+1}B^{m-k}(\log\log X)^{m-2k} (\log \log B)^{O(1)}.
\end{align}

(2) $|u|\leq 5$.
We estimate $g_1(|2u|)$ and $g_2$ by the trivial bound, and since $e^{10}<|t_1-u| \leq |t_i-u| \leq |t_i|+|u|$, $|u|\leq 5$, for any $1 \leq i \leq k$, we have $|t_i|\geq 5$ and
\begin{equation}\label{g2ug2ti2}
  \begin{split}
   g_1(|2u|) \ll &\log X,\quad\quad g_1(|2t_i|) \ll \log \log B,\\
   g_2(x)\ll &\log\log B, \,\,\,\,\text{for any}\,\,\, x\in [e^e, e^X].
  \end{split}
\end{equation}
By (\ref{L(t,u)}), (\ref{gtjti}), (\ref{gtiu1}) and (\ref{g2ug2ti2}), we get
\begin{align}\label{L(t,u)3}
 &L(\mathbf{t},u) \nonumber\\
 \ll & X(\log X)^{\frac{(m-2k)^2+4k}{4}+\frac{(m-2k)^2}{4}-\frac{m-2k}{2}}(\log \log B)^{O(1)}
     \bigg(\prod^{k-1}_{i=1} \prod^{k}_{j=i+1}\frac{\log X}{e^{\max(h_i,\ldots, h_{j-1} ) } } \bigg)^4 \nonumber\\
     = & X(\log X)^{\frac{m(m-1)}{2}+2k(2k-m)}(\log \log B)^{O(1)}  \exp\Big( -4\sum^{k-1}_{i=1} \sum^{k}_{j=i+1}\max(h_i,\ldots, h_{j-1} )\Big).
\end{align}

   Since $|t_i| \geq 5$ for any $1 \leq i \leq k$ when $|u| \leq 5$, the volume of the region $\mathcal{C}_{H,h_1, \cdots, h_{k-1}}$ is $\ll B^{k}$. It follows that
\begin{align}\label{case22}
 &\sum_{\substack{1\leq h_1, \ldots h_{k-1}\leq H}}  H^{m-2k} |\mathcal{B}_{H}|^{m-2k-1} \int\limits_{\substack{\mathcal{C}_{H,h_1, \cdots, h_{k-1}} \\ |u| \leq 5}} L(\mathbf{t},u)\, d u\, d \mathbf{t}  \nonumber\\
     \ll & X(\log X)^{\frac{m(m-1)}{2}+2k(2k-m)}B^{m-k-1}(\log\log X)^{m-2k}(\log \log B)^{O(1)} \nonumber\\
     & \hspace*{4cm} \times \sum_{\substack{1\leq h_1, \ldots h_{k-1}\leq H}} \exp\Big( -4\sum^{k-1}_{i=1} \sum^{k}_{j=i+1}\max(h_i,\ldots, h_{j-1} )\Big) \nonumber\\
     \ll &   X(\log X)^{\frac{m(m-1)}{2}+2k(2k-m)}B^{m-k-1}(\log\log X)^{m-2k}(\log \log B)^{O(1)}.
\end{align}
From (\ref{Lintest}), (\ref{case1}), (\ref{case21}) and (\ref{case22}), we obtain
\begin{align}\label{lemma50}
    &T_m(B,X)\nonumber\\
    \ll& X\Bigg(
    (\log X)^{\frac{m^2}{2}-\frac{3m}{2}+k+1}B^k+(\log X)^{\frac{(m-2k)^2}{4}+2k^2-2k+1}B^{m-k}(\log\log X)^{m-2k}\nonumber\\
    &\quad\quad+(\log X)^{\frac{m(m-1)}{2}+2k(2k-m)}B^{m-k-1}(\log\log X)^{m-2k}\Bigg) (\log \log B)^{O(1)}.
\end{align}
We apply (\ref{lemma50}) with $B=e^n$ to estimate (\ref{lemma51}), for any integer $k \geq 1$ and any real numbers $m \geq 2k+2$ and $\varepsilon>0$. Then, we obtain
\begin{align*}
  &\sum_{n\leq  \log X+1}  \frac{n^{m-1+\varepsilon} }{e^{nm} }\sum_{\substack{p \leq X \\ (p,2)=1}} \log p\bigg( \int_{e^{n-1}-1}^{e^{n}-1 } |L(1/2+it,f\otimes\chi_{8p}) | dt \bigg)^{m}
    \\
     \ll & X\sum_{n\leq  \log X+1} \frac{n^{m-1+\varepsilon} }{e^{nm} } \\
    & \times
     \Big(
    (\log X)^{\frac{m^2}{2}-\frac{3m}{2}+k+1}e^{kn}(\log n)^{O(1)}+(\log X)^{\frac{(m-2k)^2}{4}+2k^2-2k+1}e^{mn-kn}(\log\log X)^{m-2k} (\log n)^{O(1)}\nonumber\\
    &\quad\quad+(\log X)^{\frac{m(m-1)}{2}+2k(2k-m)}e^{(m-k-1)n}(\log\log X)^{m-2k}(\log n)^{O(1)}\Big)\nonumber\\
    &\ll X\Big(
    (\log X)^{\frac{m^2}{2}-\frac{3m}{2}+k+1}
    +(\log X)^{\frac{(m-2k)^2}{4}+2k^2-2k+1+\varepsilon}+(\log X)^{\frac{m(m-1)}{2}+2k(2k-m)+\varepsilon}\Big)\\
    &\ll X(\log X)^{R(m,k,\varepsilon)}.
\end{align*}
We complete the proof of Lemma \ref{fdiff}.
\end{proof}

\subsection{Proof of Proposition \ref{Sm2}}

We apply the Cauchy-Schwarz inequality to obtain
\begin{align}\label{pocs1}
  \begin{split}
   U_m^{(2)}(X,Y;f)
    \leq  \bigg(\sum_{2<p \leq X }&(\log p)\bigg|\sum_{n\leq Y} \chi_{8p}(n)\lambda_f(n)\Big (1-\Phi_D \Big( \frac {n}{Y} \Big) \Big )\bigg|^{2}\bigg)^{\frac{1}{2}}\\
    &\times\bigg(\sum_{2<p \leq X }(\log p)\bigg|\sum_{n\leq Y} \chi_{8p}(n)\lambda_f(n)\Big (1-\Phi_D \Big( \frac {n}{Y} \Big)\Big )\bigg|^{2m-2}\bigg)^{\frac{1}{2}}.
  \end{split}
\end{align}
Applying  \cite[Corollary 2]{DRHB}, 
together with $Y \leq X$ and \eqref{lambdabound}, to the first term of product (\ref{pocs1}), it yields that
\begin{align*}
  &\sum_{2<p \leq X }(\log p)\bigg|\sum_{n\leq Y} \chi_{8p}(n)\lambda_f(n)\Big (1-\Phi_D \Big( \frac {n}{Y} \Big) \Big )\bigg|^{2} \\
  \ll &\log X \sum_{\substack{d \leq X \\ (d,2)=1}}\bigg|\sum_{n\leq Y} \chi_{8d}(n)\lambda_f(n) \Big (1-\Phi_D \Big( \frac {n}{Y} \Big)\Big ) \bigg|^{2}\\
  \ll & (XY)^{\varepsilon}(X+Y)\sum_{\substack{n_1, n_2 \leq Y \\ n_1n_2=\square} }\lambda_f(n_1)\lambda_f(n_2)\Big (1-\Phi_D \Big( \frac {n_1}{Y} \Big)\Big )\Big (1-\Phi_D \Big(\frac {n_2}{Y} \Big) \Big ) \\
  \ll &
  X^{1+\varepsilon}\sum_{\substack{ n_1, n_2 \in[1,Y/D]\cup [Y(1-1/D),Y] \\ n_1n_2=\square} }1.
\end{align*}
  We write $n_1=d m^2_1, n_2=d m^2_2$ with $d$ square-free.  The above is
\begin{align}\label{pocs2}
  \ll& X^{1+\varepsilon}\sum_{\substack{d \leq Y}}\sum_{\substack{ m_1, m_2 \in[1,(Y/(dD))^{1/2}]\cup[(Y(1-1/D)/d)^{1/2}, (Y/d)^{1/2}]}}1 \nonumber\\
  \ll & X^{1+\varepsilon}\sum_{\substack{d \leq Y}} \Big(Y/(dD)+\Big((Y/d)^{1/2}-(Y(1-1/D)/d)^{1/2}\Big)^2 \Big)
\nonumber\\
   \ll & X^{1+\varepsilon}YD^{-1}\ll X^{1-2\varepsilon}Y
\end{align}
with $D=X^{3\varepsilon}$.
For the second term of the product (\ref{pocs1}), we have the upper bound
\begin{align}\label{pocs3}
  \begin{split}
&\sum_{2<p \leq X }(\log p)\bigg|\sum_{n\leq Y} \chi_{8p}(n)\lambda_f(n)\Big (1-\Phi_D \Big( \frac {n}{Y} \Big)\Big )\bigg|^{2m-2}
\\
\ll &\sum_{2<p \leq X }(\log p)\bigg|\sum_{n\leq Y} \chi_{8p}(n)\lambda_f(n)\bigg|^{2m-2}+\sum_{2<p \leq X }(\log p)\bigg|\sum_{n\leq Y} \chi_{8p}(n)\lambda_f(n)\Phi_D \Big( \frac {n}{Y} \Big)\bigg|^{2m-2}.
  \end{split}
\end{align}
We apply Proposition \ref{Sm1} to estimate the second term on the right-hand side of (\ref{pocs3}), yielding
\begin{align}\label{pocs4}
 U^{(1)}_{2m-2}(X,Y;f)
 \ll  XY^{m-1}(\log X)^{O_{m,\varepsilon}(1)}.
\end{align}
 To estimate the first term on the right-hand side of \eqref{pocs3}, we apply Perron's formula \cite[Theorem 1 of Chapter 5 ]{Ka}. Then
\begin{align}\label{Pint}
  \begin{split}
    &\sum_{n\leq Y}\chi_{8p}(n)\lambda_f(n)= \frac 1{ 2\pi i}\int_{1+1/\log Y-iY}^{1+1/\log Y+iY}L(s,f\otimes\chi_{8p}) \frac{Y^s}{s} d s +O(\log Y)\\
     = & \left(\frac 1{ 2\pi i}\int_{1/2-iY}^{1/2+iY} +\frac 1{ 2\pi i}\int_{1+1/\log Y -iY}^{1/2-iY}+\frac 1{ 2\pi i}\int_{1/2+iY}^{1+1/\log Y+iY} \right)L(s,f\otimes\chi_{8p})\frac{Y^s}{s}d s +O(\log Y).
  \end{split}
\end{align}
 Inserting (\ref{Pint}) back into the first term on the right-hand side of \eqref{pocs3}, we get
 \begin{align}\label{pocs5}
     \sum_{2<p \leq X }(\log p)\bigg|\sum_{n\leq Y} \chi_{8p}(n)\lambda_f(n)\bigg|^{2m-2}
     \ll (S_1+S_2+S_3+X(\log X)^{2m-2})\log X,
 \end{align}
where
\begin{align*}
    S_1=\sum_{2<p \leq X}\bigg| \int_{1/2-iY}^{1/2+iY} L(s,f\otimes\chi_{8p})\frac{Y^s}{s} ds\bigg|^{2m-2},\quad
    S_2=\sum_{2<p \leq X}\bigg| \int_{1+1/\log Y-iY}^{1/2-iY} L(s,f\otimes\chi_{8p})\frac{Y^s}{s} ds\bigg|^{2m-2}
\end{align*}
and
\begin{align*}
    S_3=\sum_{2<p \leq X}\bigg| \int_{1/2+iY}^{1+1/\log Y+iY} L(s,f\otimes\chi_{8p})\frac{Y^s}{s} ds\bigg|^{2m-2}.
\end{align*}
The estimates for $S_2$ and $S_3$ are identical, we estimate only $S_3$.
We assume that $Y\geq 10$, otherwise, the lemma is trivial.
Applying H\"older's inequality, we obtain
\begin{align}\label{S3}
 S_3\ll & \sum_{2<p \leq X}\bigg( \int_{1/2+iY}^{1+1/\log Y+iY} |L(s,f\otimes\chi_{8p})| |ds| \bigg)^{2m-2} \nonumber\\
 \ll &  \sum_{2<p \leq X}\int_{1/2+iY}^{1+1/\log Y+iY} |L(s,f\otimes\chi_{8p})|^{2m-2} |ds| \ll X(\log X)^{O(1)}.
\end{align}
For the last bound in (\ref{S3}), we apply Lemma \ref{rude} with $ 1/2 \leq \sigma \leq 1+1/\log Y$ under GRH.
  Next, we bound $S_1$ using H\"older's inequality, Lemma \ref{fdiff} with $k=1$, and the assumption $Y \leq X$.  Thus
\begin{align}\label{S1}
  S_1
  \ll & Y^{m-1}\sum_{2<p \leq X} \bigg( \int_{0}^Y \frac{|L( \tfrac{1}{2}+it,f\otimes\chi_{8p}) |}{t+1} d t \bigg)^{2m-2}  \nonumber\\
  \ll & Y^{m-1}\sum_{2<p \leq X } \bigg( \big(\sum_{n\leq \log Y+2}1\big)^{\frac{2m-3}{2m-2}}\bigg(\sum_{n\leq \log Y+2}\bigg(\int_{e^{n-1}-1}^{e^{n}-1 } \frac{|L( \tfrac{1}{2}+it,f\otimes\chi_{8p}) |}{t+1} d t\bigg)^{2m-2}\bigg)^{\frac{1}{2m-2}}\bigg)^{2m-2}\nonumber\\
   \ll &  Y^{m-1}\sum_{n\leq \log Y+2} \frac{n^{2m-2} }{e^{(2m-2)n}} \sum_{\substack{p \leq X \\ (p,2)=1}} \bigg( \int_{e^{n-1}-1}^{e^{n}-1 } |L( \tfrac{1}{2}+it,f\otimes\chi_{8p}) | d t \bigg)^{2m-2} \nonumber\\
  \ll & Y^{m-1}X(\log X)^{O_{m,\varepsilon}(1)}\Big ( \sum_{n\leq \log Y+2}\frac{n^{2m-2}}{e^{(2m-2)n} }e^n +\sum_{n\leq \log Y+2}n^{2m-2} \Big )\nonumber\\
  \ll & Y^{m-1}X(\log X)^{O_{m,\varepsilon}(1)}.
\end{align}
From \eqref{pocs5}--\eqref{S1}, we deduce that
\begin{equation}
\label{pocs6}
   \sum_{2<p \leq X }(\log p)\bigg|\sum_{n\leq Y} \chi_{8p}(n)\lambda_f(n)\bigg|^{2m-2}\ll XY^{m-1}(\log X)^{O_{m,\varepsilon}(1)}.
\end{equation}
Finally, by \eqref{pocs1}--\eqref{pocs4}, \eqref{pocs6} and $ Y \leq X$, we complete the proof of Proposition \ref{Sm2}.

\section{Proof of Theorem \ref{upper}}

First, applying the same method as in the derivation of \eqref{first}, 
we obtain under GRH that
\begin{align*}
 U_m (X,Y;f,W) \ll &
   Y^{m/2}\sum_{2<p\leq X}(\log p) \Bigg| \int\limits_{ |t|\leq X^{2\varepsilon}}\big|L( 1/2+it, f \otimes\chi_{8p})\big|\frac{1}{(1+|t|)^{10}} \dif t\Bigg|^{m}+O(XY^{m/2}).
\end{align*} 	
   Hence, it remains to show that under GRH,  for any integer $m \geq 4$,
\begin{equation*}
 \sum_{2<p \leq X }(\log p) 
   \Bigg| \int\limits_{ |t|\leq X^{\varepsilon}}\big|L( 1/2+it, f \otimes\chi_{8p})\big|\frac{1}{(1+|t|)^{10}} \dif t\Bigg|^{m} \ll X(\log X)^{\frac{m(m-3)}{2}}.
\end{equation*}
In Lemma \ref{fdiff}, we estimate the same expression. Since here we only consider  integer $m\geq 4$, we can save on the power of $\log X$.

  In view of \eqref{lemma51}, 
  it suffices to prove a refinement of  Lemma \ref{fdiff}
  concerning the integral
\begin{equation*}
    Z_m(B,X):=\sum_{2<p \leq X }(\log p) \bigg( \int_{0}^{B} |L(1/2+it,f\otimes\chi_{8p}) | \dif t \bigg)^{m},
\end{equation*}
where $10 \leq B=X^{O(1)}$.

   More precisely, under GRH, we need to show that  for any integer $m \geq 4$,
\begin{align}
\label{Tbound}
    Z_m(B,X) \ll B^2(\log \log B)^{O_m(1)}X(\log X)^{\frac{m(m-3)}{2}}.
\end{align}

   In what follows, we establish \eqref{Tbound}. Our approach is motivated by the proof of \cite[Proposition 23]{MT25}.
We  first observe that 
\begin{align*}
 Z_m(B,X) 
 =& \sum_{2<p \leq X }(\log p) \bigg( \bigg( \int_{0}^{1/(2\log X)}+ \int_{1/(2\log X)}^{5}+ \int_{5}^{B}\bigg) |L(1/2+it,f\otimes\chi_{8p}) |\dif t \bigg)^{m} \\
 \ll   &\sum^3_{k=1}Z_{m,k}(B,X),
\end{align*}
  where
\begin{align*}
 Z_{m,1}(B,X)=& \sum_{2<p \leq X }(\log p) \bigg( \int_{0}^{1/(2\log X)}|L(1/2+it,f\otimes\chi_{8p}) |\dif t\bigg)^{m}, \\
 Z_{m,2}(B,X)=& \sum_{2<p \leq X }(\log p) \bigg( \int_{1/(2\log X)}^{5}|L(1/2+it,f\otimes\chi_{8p}) |\dif t\bigg)^{m}, \\
 Z_{m,3}(B,X)=& \sum_{2<p\leq X }(\log p) \bigg(\int_{5}^{B}|L(1/2+it,f\otimes\chi_{8p}) |\dif t \bigg)^{m}.
\end{align*}
 By symmetry,  for $1 \leq k \leq 3$, we have
\begin{equation*}
 Z_{m,k}(B,X)
      \ll  \sum_{2<p \leq X}(\log p) \int\limits_{\mathcal{A}_{B,k}}\prod_{a=1}^m|L(1/2+ it_a, f\otimes\chi_{8p})| \dif \mathbf{t},
\end{equation*}
where $\mathcal{A}_{B,k}=\{ (t_1,\dots,t_m) \in I_k^m: 0 \leq t_1 \leq t_2 \dots \leq t_m\}$. Here we define $I_1=[0,1/(2\log X)], I_2=[1/(2\log X),5], I_3=[5, B]$.

It follows from Proposition \ref{prop} that, for $1 \leq k \leq 3$, 
\begin{align}
\label{TBXbound}
\begin{split}
 Z_{m,k}(B,X)\ll& X(\log X)^{m/4} \int\limits_{\mathcal{A}_{B,k}} \prod_{1\leq i<j\leq m} g_1(|t_i-t_j|)^{1/2}g_1(|t_i+t_j|)^{1/2}g_2(|t_i-t_j|)^{1/2}g_2(|t_i+t_j|)^{1/2}\\
 &\qquad\qquad\qquad\qquad\times \prod_{1\leq i\leq m} g_1(|2t_i|)^{-1/4}g_2(|2t_i|)^{3/4}  \dif \mathbf{t}  \\
\ll & X(\log X)^{m/4}(\log \log B)^{O_m(1)} \\
& \qquad \times \int\limits_{\mathcal{A}_{B,k}} \prod_{1\leq i<j\leq m} g_1(|t_i-t_j|)^{1/2}g_1(|t_i+t_j|)^{1/2} \prod_{1\leq i\leq m} g_1(|2t_i|)^{-1/4}  \dif \mathbf{t} \\
:= & X(\log \log B)^{O_m(1)}I_{m,B,k},
\end{split}
\end{align}
  where the second bound follows by trivially estimating $g_2$  using \eqref{g1Def}, and 
\begin{align*}
I_{m,B,k} = &(\log X)^{m/4}  \int\limits_{\mathcal{A}_{B,k}} \prod_{1\leq i<j\leq m} g_1(|t_i-t_j|)^{1/2}g_1(|t_i+t_j|)^{1/2} \prod_{1\leq i\leq m} g_1(|2t_i|)^{-1/4}\dif \mathbf{t}.
\end{align*}
The estimate for $I_{m,B,k}$ was obtained in the course of the proof of  \cite[Theorem 1.1]{G&ZY-25}. Specifically, it was shown in \cite{G&ZY-25} that, for all integers $m\geq 4$ and $1 \leq k \leq 3$,
\begin{equation}
\label{ind_hyp}
I_{m,B,k} \ll B^2 (\log X)^{\frac{m(m-3)}{2}}(\log \log B)^{O_m(1)}.
\end{equation}

   The desired bound in \eqref{Tbound} then follows directly from \eqref{TBXbound} and \eqref{ind_hyp}. This completes
   the proof of Theorem \ref{upper}.

\section{Proof of Theorem \ref{lower}}

  For any $Y\geq 1$, and odd prime $2<p\leq X$, we define
$$H_{p}(Y;f,W):= \sum_{n \geq 1} \chi_{8p}(n)\lambda_f(n)W\Big(\frac{n}{Y}\Big).$$
   We further define
\begin{equation*}
\mathcal{H}_1= \sum_{2<p\leq X} (\log p) H_{p}(Y;f,W) (H_{p}(Y^{\varepsilon};f,W))^{m-1}, \quad \quad
\mathcal{H}_2=\sum_{2<p\leq X}(\log p) \vert H_{p}(Y^{\varepsilon};f,W) \vert^m.
\end{equation*}
Applying H\"older's inequality, we obtain
\begin{align}
\label{Holder}
 \mathcal{H}_1^{m} \leq
 \mathcal{H}_2^{m-1} U_m(X,Y;f,W).
 \end{align}

 We evaluate $ \mathcal{H}_2$ using the facts that $\lambda_f(n) \in \mr$ and that $m$ is even. Then, by Lemma \ref{character}, we have
  \begin{align}
\label{eqS2}
\begin{split}
 \mathcal{H}_2 = &\sum_{2<p\leq X}(\log p) ( H_{p}(Y^{\varepsilon};f,W) )^m \\
=& \sum_{2<p\leq X} (\log p) \,\sum_{n_1,\dots,n_m \geq 1} \chi_{8p}(n_1\cdots n_m)\lambda_f(n_1)\cdots \lambda_f(n_m)W\Big(\frac{n_1}{Y^{\varepsilon}}\Big)\cdots W\Big(\frac{n_m}{Y^{\varepsilon}}\Big) \\
  = & X\sum_{\substack{n_1,\dots,n_m \geq 1\\ n_1\cdots n_m=\square}} \lambda_f(n_1)\cdots \lambda_f(n_m)W\Big(\frac{n_1}{Y^{\varepsilon}}\Big)\cdots W\Big(\frac{n_m}{Y^{\varepsilon}}\Big) +O(X^{1/2}\log ^2X\sum_{n_1,\dots,n_m \ll Y^{\varepsilon}}|\lambda_f(n_1)\cdots \lambda_f(n_m)|) \\
= & X\sum_{\substack{n_1,\dots,n_m \geq 1\\ n_1\cdots n_m=\square}} \lambda_f(n_1)\cdots \lambda_f(n_m)W\Big(\frac{n_1}{Y^{\varepsilon}}\Big)\cdots W\Big(\frac{n_m}{Y^{\varepsilon}}\Big)+O(X^{1/2}\log ^2X\sum_{n_1,\dots,n_m \ll Y^{\varepsilon}}(n_1\dots n_m)^{\varepsilon}),
\end{split}
\end{align}
where the last estimation follows from \eqref{lambdabound}.
  Note that
\begin{align}\label{S21}
  X^{1/2}\log ^2X \sum_{n_1,\dots,n_m \ll Y^{\varepsilon}}(n_1\dots n_m)^{\varepsilon} \ll X^{1/2}Y^{(1+\varepsilon)m \varepsilon}\log ^2X =X^{1/2+\varepsilon}.
\end{align}
  Moreover, by Lemma \ref{sumoversquare},  for integers $m \geq 4$ we have
\begin{align}\label{S22}
 \sum_{\substack{n_1,\dots,n_m \geq 1\\ n_1\cdots n_m=\square}} \lambda_f(n_1)\cdots \lambda_f(n_m)W\Big(\frac{n_1}{Y^{\varepsilon}}\Big)\cdots W\Big(\frac{n_m}{Y^{\varepsilon}}\Big)
& \ll Y^{m\varepsilon/2}(\log Y)^{\frac{m(m-3)}{2}} .
\end{align}
  It follows from $\eqref{eqS2}$--$\eqref{S22}$ that 
\begin{align}
\label{H2bound}
\mathcal{H}_2 \ll_{\varepsilon, m} X Y^{m\varepsilon/2}(\log X)^{\frac{m(m-3)}{2}}.
\end{align}

  Next, we note that, by Lemma \ref{character} and arguing similarly to our discussion of $\mathcal{H}_2$, we obtain under GRH that
\begin{align*}
\mathcal{H}_1=&\sum_{2<p\leq X}(\log p) \sum_{n\geq  1} \lambda_f(n)W\Big(\frac{n}{Y}\Big)\sum_{\substack{n_1,\ldots,n_{m-1}\geq 1 \\ nn_1\dots n_{m-1}=\square}} \chi_{8p}(nn_1\cdots n_{m-1}) \prod_{i=1}^{m-1}\lambda_f(n_i)W\Big(\frac{n_i}{Y^{\varepsilon}}\Big) \\
= &X\sum_{\substack{n,n_1,\dots,n_{m-1} \geq 1\\ nn_1\dots n_{m-1}=\square}}\lambda_f(n)\lambda_f(n_1)\cdots \lambda_f(n_{m-1})W\Big(\frac{n}{Y}\Big)W\Big(\frac{n_1}{Y^{\varepsilon}}\Big)\cdots W\Big(\frac{n_{m-1}}{Y^{\varepsilon}}\Big)\\
& +O\Bigg(X^{1/2}\log^2 X\displaystyle\sum_{\substack{n \ll Y \\ n_1,\dots,n_{m-1} \ll Y^{\varepsilon}}}|\lambda_f(n)\lambda_f(n_1)\cdots \lambda_f(n_{m-1})|\Bigg)\\
\gg & XY^{1/2+(m-1)\varepsilon/2}(\log Y)^{\frac{m(m-3)}{2}} +
O(X^{1/2+\varepsilon}Y^{1+\varepsilon}).
\end{align*}
  It follows that, under GRH,  for any $\alpha>0$ and $X^{\varepsilon} \ll Y\ll X^{1-\alpha}$,   we have 
\begin{align}
\label{H1bound}
\mathcal{H}_1 \gg XY^{1/2+(m-1)\varepsilon/2}(\log Y)^{\frac{m(m-3)}{2}}\gg XY^{1/2+(m-1)\varepsilon/2}(\log X)^{\frac{m(m-3)}{2}}.
\end{align}

 Finally, applying \eqref{Holder}, \eqref{H2bound}, and \eqref{H1bound}, we deduce under GRH that for any even integer $m \geq 4$ and any $\alpha>0$,  with $X^{\varepsilon} \ll Y\ll X^{1-\alpha}$,
   $$U_m(X,Y;f,W)  \gg XY^{m/2}(\log X)^{\frac{m(m-3)}{2}}.$$
This completes the proof of Theorem \ref{lower}.

\vspace*{.5cm}

\noindent{\bf Acknowledgments.}  P. G. is supported in part by NSFC grant 12471003.

\bibliographystyle{plain}
\bibliography{biblio}

\end{document}